\definecolor{DarkRed}{RGB}{173,0,0}
\definecolor{LightRed}{RGB}{201,0,0}
\newtheorem{thm}{Theorem}[section]
\newtheorem{Lemma}[thm]{Lemma}
\newtheorem{Proposition}[thm]{Proposition}
\newtheorem{Corollary}[thm]{Corollary}
\theoremstyle{definition}
\newtheorem{Definition}[thm]{Definition}
\newtheorem{Remark}[thm]{Remark}
\definecolor{wwwwww}{rgb}{0.4,0.4,0.4}
\newcommand{\PP}{\mathbb{P}}
\newcommand{\ZZ}{\mathbb{Z}}
\newcommand{\kk}{k}
\newcommand{\OO}{\mathcal{O}}   
\newcommand{\Tcal}{\mathcal{T}} 
\newcommand{\Ecal}{\mathcal{E}} 
\newcommand{\Ucal}{\mathcal{U}} 
\DeclareMathOperator{\Cox}{Cox} 
\DeclareMathOperator{\Spec}{Spec}
\DeclareMathOperator{\Pic}{Pic}
\DeclareMathOperator{\Br}{Br}
\DeclareMathOperator{\Aut}{Aut}
\renewcommand{\P}{\mathbb{P}}
\newcommand{\I}{\mathcal{I}}
\DeclareMathOperator{\Def}{Def}
\DeclareMathOperator{\Ext}{Ext}
\DeclareMathOperator{\End}{End}
\DeclareMathOperator{\Hom}{Hom}
\DeclareMathOperator{\Ima}{Im}
\DeclareMathOperator{\Sym}{Sym}
\newcommand{\TypeTwo}[3]{(#1,#2,#3)}
\newcommand{\Gcal}[3]{\mathcal{G}_{#1,#2,#3}} 
\newcommand{\VV}[6]{V^{\,1}_{#1,#2,#3,#4,#5,#6}}
\newcommand{\PVV}[6]{\mathbb{P}\!\big(\VV{#1}{#2}{#3}{#4}{#5}{#6}\big)}
\begin{document}

\title{On the unirationality of conic bundles with discriminant of degree eight}

\author[Alex Casarotti]{Alex Casarotti}
\address{\sc Alex Casarotti\\ Dipartimento di Matematica e Informatica, Universit\`a di Ferrara, Via Machiavelli 30, 44121 Ferrara, Italy}
\email{csrlxa@unife.it}

\author[Søren Gammelgaard]{Søren Gammelgaard}
\address{{\sc S{\o}ren Gammelgaard}\\ Dipartimento di Matematica e Informatica, Universit\`a di Ferrara, Via Machiavelli 30, 44121 Ferrara, Italy}
\email{soren.gammelgaard@unife.it}

\author[Alex Massarenti]{Alex Massarenti}
\address{\sc Alex Massarenti\\ Dipartimento di Matematica e Informatica, Universit\`a di Ferrara, Via Machiavelli 30, 44121 Ferrara, Italy}
\email{msslxa@unife.it}

\date{\today}
\subjclass[2020]{Primary 14E08, 14M20; Secondary 14M22, 14J26, 12F20, 12E10.}
\keywords{Conic bundles, unirationality, transcendental field extensions, rationality, rational points, infinitesimal deformations}

\begin{abstract}
We study the unirationality of surface conic bundles $\pi\colon S\to\PP^1$ over an arbitrary field $k$ with discriminant degree $d_S=8$, the first case beyond the del Pezzo range. We divide these surfaces in four families and produce explicit rational multisections via tangent constructions and Cremona transformations. Over $C_1$ fields we obtain Zariski dense loci of minimal, hence non $k$–rational, yet $k$–unirational conic bundles in each family; for one of the types we prove that the dense unirational locus is indeed Zariski open. Finally, we investigate the deformation theory of these conic bundles and how their unirationality behaves under specialization.
\end{abstract}

\maketitle
\setcounter{tocdepth}{1}
\tableofcontents

\section{Introduction}
The birational classification of projective varieties over a field intertwines geometry and arithmetic in a striking way, and conic bundles provide one of its most revealing test beds. A \emph{surface conic bundle} is a smooth, projective, geometrically irreducible surface \(S\) over a field \(k\) equipped with a morphism \(\pi\colon S\to\PP^1\) whose geometric fibers are plane conics (either smooth or the union of two lines). Over non closed fields, rationality is constrained by Brauer-theoretic obstructions, while unirationality can still occur abundantly; understanding the border between these behaviors has guided much of the modern progress.

A comprehensive survey of the rationality problem for conic bundles (and its higher-dimensional avatars) is given by Prokhorov~\cite{Pr18}. 

On the side of obstructions, recent advances for quadric and conic bundle fibrations show that subtle degeneration patterns often force stable non rationality; see, for instance, ~\cite{HKT16}, ~\cite{HPT18}, ~\cite{ABP18}, ~\cite{AO18}, ~\cite{BG18}, ~\cite{Sc19a} for quadric bundles. 

On the constructive side, two complementary mechanisms are particularly relevant to surfaces. First, the presence of a rational multisection of low degree forces unirationality by classical projection arguments \cite{KM17}, \cite{Mas23b}. Second, in special configurations one can diagonalize the generic fiber and reduce the existence of rational or unirational parametrizations to arithmetic statements about families of diagonal conics. 

A powerful example of the latter is due to Mestre~\cite{Mes94}, who showed that certain \(2\)-torsion Brauer classes with eight simple poles and constant residues can be annihilated by a change of variable, yielding explicit unirational parametrizations.

The case of del Pezzo surfaces carrying a conic bundle structure is now well understood. If \(d_S\) denotes the degree of the discriminant divisor of \(\pi\) (i.e.\ the number of singular fibers, counted with multiplicity), the adjunction formula gives
\[
K_S^2 \;=\; 8-d_S.
\]
Thus \(S\) is a del Pezzo surface if and only if \(d_S\le 7\). A decisive theorem of Kollár–Mella~\cite{KM17} shows that, over an arbitrary field \(k\), a conic bundle surface with \(d_S\le 7\) is unirational if and only if it has a \(k\)-point. In particular, del Pezzo surfaces of degree \(1\) and \(2\) admitting a conic bundle structure are unirational as soon as they possess a rational point. Note that this last requirement is automatically satisfied for del Pezzo surfaces of degree $1$, the rational point being the unique base point of $|-K_S|$. 

The case \(d_S=8\) is the first that falls outside the del Pezzo realm (\(K_S^2=0\)) and constitutes a watershed for the geometry and arithmetic of conic bundles.

\medskip

We set the framework and notation. Fix nonnegative integers \(a_0,a_1,a_2\) (which we will call \emph{weights}) and consider the projective bundle
\[
\Tcal_{a_0,a_1,a_2}\;\cong\;\PP\!\big(\Ecal_{a_0,a_1,a_2}\big)
\qquad\text{with}\qquad
\Ecal_{a_0,a_1,a_2}\;\cong\;\OO_{\PP^1}(a_0)\oplus  \OO_{\PP^1}(a_1)\oplus \OO_{\PP^1}(a_2).
\]
We view \(\Tcal_{a_0,a_1,a_2}\) as the simplicial toric variety with Cox ring
\(\Cox(\Tcal_{a_0,a_1,a_2})\cong k[x_0,x_1,y_0,y_1,y_2]\),
\(\ZZ^2\)–graded so that \(\deg x_0=\deg x_1=(1,0)\), \(\deg y_i=(-a_i,1)\), and the irrelevant ideal is \((x_0,x_1)\cap(y_0,y_1,y_2)\).
A conic bundle surface \(\pi\colon S\to\PP^1\) with a splitting presentation is cut out by a single quadratic equation in the \(\PP^2\)–fibers:
\[
S \;=\; \Big\{\,\textstyle\sum_{0\le i\le j\le 2}\sigma_{i,j}(x_0,x_1)\,y_i y_j=0\Big\}
\;\subset\;
\PP\!\big(\OO_{\PP^1}(a_0)\oplus \OO_{\PP^1}(a_1)\oplus \OO_{\PP^1}(a_2)\big),
\]
where each \(\sigma_{i,j}\in k[x_0,x_1]_{d_{i,j}}\) is homogeneous and the degrees satisfy the compatibilities
\[
d_{0,0}-2a_0 \;=\; d_{0,1}-a_0-a_1 \;=\; d_{0,2}-a_0-a_2 \;=\; d_{1,1}-2a_1 \;=\; d_{1,2}-a_1-a_2 \;=\; d_{2,2}-2a_2.
\]
The multidegree is the \(6\)–tuple \((d_{0,0},d_{0,1},d_{0,2},d_{1,1},d_{1,2},d_{2,2})\).
The discriminant of \(\pi\) is the divisor of points of \(\PP^1\) where the fiber conic is singular, and its degree is
\[
d_S \;=\; d_{0,0}+d_{1,1}+d_{2,2}.
\]
In the case \(d_S=8\) there are exactly four possible multidegrees:
\[
(4,3,3,2,2,2),\quad
(4,4,2,4,2,0),\quad
(6,4,3,2,1,0),\quad
(8,4,4,0,0,0),
\]
with corresponding weights:
\[
(2,1,1),\quad
(2,2,0),\quad
(3,1,0),\quad
(4,0,0),
\]
and each corresponding parameter space has dimension \(21\). {Given a conic bundle $S\to \P^1$, we term the tuple of weights $(a_0,a_1,a_2)$ the \emph{type} of $S$.}
Geometric constructions, involving tangent conditions or special linear subfamilies, are an efficient way to produce unirational multisections ~\cite{MM24,Mas23b}.

\medskip

Our results focus on the watershed case $d_S=8$. Working uniformly in the four $21$–dimensional families listed above, we construct explicit unirational multisections and show that unirationality persists on large loci of the parameter spaces. We also separate rational from non-rational behavior inside the same family via Brauer residues. 

\begin{thm} Let $k$ be an {infinite} perfect $C_1$–field of characteristic different from 2, and $\mathbb{P}_{(d_{0,0},d_{0,1},d_{0,2},d_{1,1},d_{1,2},d_{2,2})}$ be the parameter space for conic bundles of multidegree $(d_{0,0},d_{0,1},d_{0,2},d_{1,1},d_{1,2},d_{2,2})$. For each
$$
(d_{0,0},d_{0,1},d_{0,2},d_{1,1},d_{1,2},d_{2,2}) \in \{(4,3,3,2,2,2), (4,4,2,4,2,0), (6,4,3,2,1,0),(8,4,4,0,0,0)\},
$$
there exists a Zariski dense subset of $\mathbb{P}_{(d_{0,0},d_{0,1},d_{0,2},d_{1,1},d_{1,2},d_{2,2})}$ whose general point represents a minimal, in particular non $k$–rational, yet $k$–unirational conic bundle surface $S\to\PP^1$ with $d_S=8$. Furthermore, in $\mathbb{P}_{(6,4,3,2,1,0)}$ the unirational locus is Zariski open. 
\end{thm}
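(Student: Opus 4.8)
The argument rests on the classical multisection principle: a conic bundle surface $\pi\colon S\to\PP^1$ over $k$ is $k$-unirational whenever $S$ contains a curve $\Gamma$, defined over $k$ and $k$-isomorphic to $\PP^1$, with $\pi|_\Gamma$ finite onto $\PP^1$. Indeed, base changing $\pi$ along $\Gamma$ gives a conic bundle over $\Gamma\cong_k\PP^1$ with the diagonal as a section; its generic fibre is a conic over $k(\Gamma)=k(t)$ with a rational point, hence $k(t)$-rational, so $S\times_{\PP^1}\Gamma$ is a $k$-rational surface dominating $S$ (cf.\ \cite{KM17}, \cite{Mas23b}). Over an infinite $C_1$ field every smooth member of the four parameter spaces automatically has a $k$-point, since the fibre of $\pi$ over a rational point of $\PP^1$ is either a smooth conic over the $C_1$ field $k$, hence isotropic, or a pair of lines through a $k$-rational node. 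So the plan is: (i) produce a rational multisection $\Gamma$ on a suitable locus of each $\mathbb{P}_{(d_{0,0},\dots,d_{2,2})}$; (ii) cut this locus down to the $k$-minimal members, where $K_S^2=0$ forces non-$k$-rationality.

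For the multidegree $(6,4,3,2,1,0)$ — type $(3,1,0)$ — step (i) is transparent and gives the openness statement. Here $\sigma_{2,2}$ is a constant while $\sigma_{1,2},\sigma_{1,1}$ have degrees $1$ and $2$. If $\sigma_{2,2}\neq 0$, the curve $\Gamma:=S\cap\{y_0=0\}$ lies in the scroll $\PP\big(\OO_{\PP^1}(1)\oplus\OO_{\PP^1}\big)\to\PP^1$, where it is cut by $\sigma_{2,2}y_2^2+\sigma_{1,2}y_1y_2+\sigma_{1,1}y_1^2$; since $\sigma_{2,2}$ is a nonzero constant, $\pi|_\Gamma$ is finite of degree $2$, and since the discriminant $\sigma_{1,2}^2-4\sigma_{2,2}\sigma_{1,1}$ has degree $2$, the curve $\Gamma$ has geometric genus $0$, i.e.\ is a smooth conic over $k$; by the $C_1$ hypothesis it carries a $k$-point and is therefore a rational bisection of $\pi$ defined over $k$. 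If instead $\sigma_{2,2}=0$, then the defining equation vanishes on the section $\{y_0=y_1=0\}$, so $\pi$ has a $k$-rational section and $S$ is outright $k$-rational. Dealing by hand with the codimension $\geq 1$ locus on which the conic above degenerates, one concludes that \emph{every} smooth member of $\mathbb{P}_{(6,4,3,2,1,0)}$ over an infinite $C_1$ field is $k$-unirational, so the unirational locus is the (open) smooth locus.

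For the three remaining types $(2,1,1)$, $(2,2,0)$, $(4,0,0)$, completing the square in an appropriate variable presents $S$ birationally as a double cover of a Hirzebruch surface $T'$ branched along a curve $B$ which is now too positive: the preimages of the coordinate sections of $T'$ have geometric genus $\geq 1$ and carry no unirational information. Instead I would construct, by a tangent-and-Cremona construction in the spirit of \cite{MM24}, \cite{Mas23b}, a rational curve $\ell\subset T'$ defined over $k$ that meets $B$ in at most two points when tangencies are counted with multiplicity — each prescribed tangency of $\ell$ to $B$ absorbing two branch points of the induced double cover of $\ell$ — so that the preimage of $\ell$ in $S$ has geometric genus $0$ and is a rational multisection. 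Imposing these tangencies, together with the constraint that $\ell$ be a section or low-degree multisection of $T'\to\PP^1$, amounts to a polynomial system in the coefficients of $\ell$, and the $C_1$ hypothesis is exactly what produces a $k$-solution once the system has the expected dimension; the locus where this succeeds — the non-vanishing of finitely many explicit polynomials together with the solvability of this auxiliary system — is Zariski dense in each of the three $21$-dimensional parameter spaces. Making these tangency conditions explicit and uniform across the three multidegrees, and checking that the extracted curve is geometrically integral of genus $0$ and finite over $\PP^1$ on a dense locus, is the step I expect to be the main obstacle.

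It remains to intersect each unirational locus with the $k$-minimal locus. A conic bundle surface with $d_S=8$ has $\rho_{\overline k}(S)=2+d_S=10$, and for a general member the $(-1)$-curves on $S_{\overline k}$ are precisely the sixteen components of the eight singular fibres; $k$-minimality then amounts to the $\Gal(\overline k/k)$-action leaving no Galois-stable disjoint sub-configuration of these contractible, which holds as soon as the two components of every singular fibre are conjugate over the residue field of the corresponding point of $\PP^1$ — a Zariski-open condition on the $21$-dimensional parameter space. Hence the $k$-minimal locus is open and dense, and its intersection with the unirational locus above is again dense, and open for type $(3,1,0)$ (being the intersection of two nonempty open subsets of an irreducible variety). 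On this intersection $K_S^2=0$, so $S$ is not a del Pezzo surface, and a minimal conic bundle surface with $K_S^2=0$ is not $k$-rational by the Enriques--Manin--Iskovskikh classification of minimal $k$-rational surfaces (see \cite{Pr18}); being simultaneously $k$-unirational, it yields the asserted Zariski-dense locus in each multidegree, Zariski open for $(6,4,3,2,1,0)$.
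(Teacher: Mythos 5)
Your overall strategy -- rational multisections via Enriques' criterion, the hyperplane section $\{y_0=0\}$ for type $(3,1,0)$, tangent-and-Cremona constructions for the other types, and Iskovskikh's classification to pass from minimality to non-rationality -- is the same as the paper's. But there are two genuine gaps. First, for the three multidegrees $(4,3,3,2,2,2)$, $(4,4,2,4,2,0)$, $(8,4,4,0,0,0)$ you defer exactly the step that constitutes the bulk of the paper: you say you ``would construct'' a rational curve meeting the branch locus with prescribed tangencies and acknowledge that making this explicit ``is the step I expect to be the main obstacle.'' The paper's content here is precisely the missing work: an explicit bitangent hyperplane section and a quadratic Cremona transformation centred at three double points for type $(2,1,1)$ (Proposition~\ref{prop:TTUni433222}); a tangent plane through $\Lambda_2$ and a Cremona map with an assigned tangent direction for type $(2,2,0)$ (Proposition~\ref{prop:TTUni442420}); Mestre's theorem and a chain of three explicit Cremona transformations for type $(4,0,0)$ (Propositions~\ref{prop:Me844000} and~\ref{prop:U12}); plus, in each case, a Jacobian computation showing that the $\Aut(\Tcal_{a_0,a_1,a_2})$-orbit of the tangency locus is dense. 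Without these, the density of the unirational locus in three of the four families is unproved.

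Second, your minimality argument is incorrect as stated. The condition that the two components of each singular fibre be conjugate over the residue field is \emph{not} Zariski-open on the parameter space: it asks that a certain element of $k(p)^{\times}$ fail to be a square, which is an arithmetic condition (already for the family $x^2-a$ over $\mathbb{A}^1_{\QQ}$, the locus of irreducibility is dense but neither open nor closed). So ``the $k$-minimal locus is open and dense'' and the subsequent ``intersection of two nonempty open subsets'' do not go through; one must instead verify, as the paper does in Propositions~\ref{prop:nosec433222}, \ref{prop:min433222}, \ref{prop:elld442420ns}, \ref{nr643} and \ref{prop:min844}, that on the \emph{specific} dense unirational loci a general member has irreducible fibres over $k$ and a nontrivial Brauer residue (hence no section), the two properties being checked compatibly on the same locus. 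Relatedly, your assertion that the $(-1)$-curves of $S_{\overline k}$ are ``precisely the sixteen components of the eight singular fibres'' fails for $K_S^2=0$ (such surfaces generally carry infinitely many $(-1)$-curves, e.g.\ sections with self-intersection $-1$); what is actually needed, and what the paper uses, is extremality of the conic bundle (relative Picard rank $1$) together with the absence of a rational section, detected by residues.
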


We also analyze rational vs non rational behavior in the family $\mathbb{P}_{(8,4,4,0,0,0)}$ and show that both phenomena occur on Zariski dense subsets: we construct a dense locus of rational conic bundles and another dense locus of non rational conic bundles, the latter detected by a nontrivial $2$–torsion Brauer class.

\medskip

For {the family of conic bundles of multidegree} \((4,3,3,2,2,2)\), unirationality on a dense locus is established via a rational \(2\)–section cut out by a bitangent hyperplane section; see Proposition~\ref{prop:TTUni433222} and Theorem~\ref{thm:main433222}. Minimality is addressed using Brauer residues; see Proposition~\ref{prop:nosec433222} and Proposition~\ref{prop:min433222}.

For {the family of multidegree} \((4,4,2,4,2,0)\), unirationality on a dense locus is proved by producing a rational \(2\)–section through prescribed tangent conditions, together with dominance of the automorphism action on the parameter space; see Proposition~\ref{prop:TTUni442420} and Proposition~\ref{prop:dom442420}. Density of rational points in suitable families is discussed in Proposition~\ref{prop:elld442420}.

For {the family of multidegree} \((6,4,3,2,1,0)\), a general member is unirational; moreover, the unirational locus is Zariski open; see Proposition~\ref{prop:case643210}.

For {the family of multidegree} \((8,4,4,0,0,0)\), we exhibit a dense locus of unirational conic bundles using a diagonal normal. We further construct a dense locus of rational bundles (Proposition~\ref{prop:c2zero}), a transverse dense locus with unirational but non rational minimal conic bundles (Proposition~\ref{prop:min844}), and a dense locus of non rational bundles (Proposition~\ref{prop:844NotRational}). These results together are summarized in Theorem~\ref{thm:main844}.

\medskip

In Section~\ref{defs} we complement these constructions with a deformation--theoretic analysis of the
four families. We first study infinitesimal deformations of a splitting conic bundle
\(\pi\colon S\to\PP^{1}\) inside its ambient projective bundle
\(\PP(\Ecal_{a_0,a_1,a_2})\), proving an exact sequence that relates
embedded deformations of the pair \((S\subset\PP(\Ecal_{a_0,a_1,a_2}))\) to
\(H^{1}(\PP^{1},\End \Ecal_{a_0,a_1,a_2})\) and to the cohomology of the normal bundle
\(N_{S/\PP(\Ecal_{a_0,a_1,a_2})}\). We show that for
discriminant degree \(d_{S}=8\) the bundles of type \((2,1,1)\) are rigid, whereas in the three
other types \((2,2,0)\), \((3,1,0)\), and \((4,0,0)\) there exist embedded first--order
deformations that necessarily change the splitting type of the ambient rank–\(3\) bundle.
We then construct explicit one–parameter families realizing these deformations, degenerating
conic bundles of type \((2,1,1)\) to type \((2,2,0)\), of type \((2,2,0)\) to type \((3,1,0)\),
and of type \((3,1,0)\) to type \((4,0,0)\). In particular, the unirational conic bundles
produced in Sections~\ref{sec:433222}, \ref{sec:442420}, \ref{sec:643210}, \ref{sec:844000} sit in positive–dimensional families along which the splitting type
varies, and our degeneration arguments allow us to transport unirationality across the four
multidegree types with \(d_{S}=8\).

\subsection*{Conventions on the base field and terminology} 
All along the paper we work over a base field $k$ with characteristic different from two. {We shall always assume $k$ to be infinite.}

Let $X$ be a variety over $k$. When we say that $X$ is rational or unirational, without specifying over which field, we will always mean that $X$ is rational or unirational over $k$. Similarly, we will say that $X$ has a point or contains a variety with certain properties meaning that $X$ has a $k$-rational point or contains a variety defined over $k$ with the required properties. 

\subsection*{Acknowledgments}
The authors were supported by the PRIN 2022 grant (project 20223B5S8L) “\emph{Birational geometry of moduli spaces and special varieties}”. The third–named author is a member of GNSAGA (INdAM). We are grateful to Alena Pirutka for many helpful discussions on Brauer groups. We thank J\'anos Koll\'ar for directing our attention to the deformation questions in Section \ref{defs} and for helpful suggestions, and Barbara Fantechi for useful discussion on embedded infinitesimal deformations of conic bundles.

\section{Conic bundles, Enriques' criterion and Brauer groups}\label{QB}
In this section we fix notation for conic bundles and recall Enriques' unirationality criterion.

\begin{Definition}\label{def1}
Let \(W\) be a smooth \((n-1)\)-dimensional variety over a field \(k\), let \(\mathcal{E}\) be a rank \(3\) vector bundle on \(W\), let \(\overline{\pi}\colon \mathbb{P}(\mathcal{E})\to W\) be the associated projective bundle with tautological line bundle \(\mathcal{O}_{\mathbb{P}(\mathcal{E})}(1)\), and let \(\mathcal{L}\) be a line bundle on \(W\).
A quadratic form with values in \(\mathcal{L}\) is a global section
\[
\sigma \in H^0\!\big(W,\Sym^2\mathcal{E}^{\vee}\!\otimes \mathcal{L}\big)
\cong H^0\!\big(\mathbb{P}(\mathcal{E}), \mathcal{O}_{\mathbb{P}(\mathcal{E})}(2)\!\otimes \overline{\pi}^{*}\mathcal{L}\big).
\]
A conic bundle over \(W\) is a variety
\[
\mathcal{Q}^{1} \coloneqq \{\sigma=0\}\subset \mathbb{P}(\mathcal{E})
\]
cut out by a generically non degenerate section \(\sigma\) as above, endowed with the projection \(\pi\colon \mathcal{Q}^{1}\to W\) induced by \(\overline{\pi}\).

The discriminant of \(\pi\) is the divisor \(D_{\mathcal{Q}^{1}}\subset W\) where \(\sigma\) fails to have full rank. When \(W=\mathbb{P}^{n-1}\), the discriminant is a hypersurface, and we write \(d_{\mathcal{Q}^{1}}\coloneqq\deg(D_{\mathcal{Q}^{1}})\).
\end{Definition}

\begin{Remark}
A general fiber of \(\pi\colon \mathcal{Q}^{1}\to W\) is a smooth conic in \(\mathbb{P}^{2}\).
In Definition~\ref{def1} we do not require \(\pi\) to be flat. Often one defines a conic bundle as a flat morphism \(\pi\colon \mathcal{Q}^{1}\to W\) with fibers isomorphic to plane conics and with \(\mathcal{Q}^{1}\) smooth. In that case there exist a rank \(3\) bundle \(\mathcal{E}\to W\), a line bundle \(\mathcal{L}\to W\), and a section
\(\sigma\in H^0\!\big(\mathbb{P}(\mathcal{E}), \mathcal{O}_{\mathbb{P}(\mathcal{E})}(2)\otimes \overline{\pi}^{*}\mathcal{L}\big)\)
such that \(\mathcal{Q}^{1}=\{\sigma=0\}\subset \mathbb{P}(\mathcal{E})\) and \(\pi=\overline{\pi}\vert_{\mathcal{Q}^{1}}\)
\cite[Theorem~3.2]{Pr18}, \cite[Proposition~1.2]{Bea77}.
\end{Remark}

\begin{Definition}\label{ext_min}
A conic bundle \(\pi\colon \mathcal{Q}^{1}\to W\) is \emph{extremal} if its relative Picard rank is \(1\). It is \emph{minimal} if, in addition, it admits no rational section.
\end{Definition}

Let \(a_0,a_1,a_2\in \mathbb{Z}_{\ge 0}\). Consider the simplicial toric variety
\(\mathcal{T}_{a_0,a_1,a_2}\) with Cox ring
\[
\Cox(\mathcal{T}_{a_0,a_1,a_2})\cong k[x_0,\dots,x_{n-1},\,y_0,y_1,y_2]
\]
endowed with a \(\mathbb{Z}^2\)-grading, with respect to a chosen basis \((H_1,H_2)\) of \(\Pic(\mathcal{T}_{a_0,a_1,a_2})\), given by
\[
\left(
\begin{array}{cccccc}
x_0 & \dots & x_{n-1} & y_0 & y_1 & y_2\\\hline
1   & \dots & 1       & -a_0 & -a_1 & -a_2\\
0   & \dots & 0       & 1    & 1    & 1
\end{array}\right),
\qquad
\text{with irrelevant ideal }(x_0,\dots,x_{n-1})\cap (y_0,y_1,y_2).
\]
Then
\[
\mathcal{T}_{a_0,a_1,a_2}\cong \mathbb{P}(\mathcal{E}_{a_0,a_1,a_2}),\qquad
\mathcal{E}_{a_0,a_1,a_2}\cong \mathcal{O}_{\mathbb{P}^{n-1}}(a_0)\oplus \mathcal{O}_{\mathbb{P}^{n-1}}(a_1)\oplus \mathcal{O}_{\mathbb{P}^{n-1}}(a_2).
\]
In the secondary fan, \(H_1=(1,0)\) corresponds to the \(x_i\), \(H_2=(0,1)\), and \(v_i=(-a_i,1)\) corresponds to \(y_i\) for \(i=0,1,2\).

\begin{Definition}\label{def:splittoric}
A divisor \(D\subset \mathcal{T}_{a_0,a_1,a_2}\) of multidegree \((d_{0,0},\,d_{0,1},\,d_{0,2},\,d_{1,1},\,d_{1,2},\,d_{2,2};\,2)\)
is a hypersurface given by an equation
\[
\sum_{i\le j}\sigma_{i,j}(x_0,\dots,x_{n-1})\, y_i y_j \;=\; 0,
\]
where \(\sigma_{i,j}\in k[x_0,\dots,x_{n-1}]_{d_{i,j}}\) and the degrees satisfy the compatibility relations
\stepcounter{thm}
\begin{equation}\label{compdeg_conic}
d_{0,0}-2a_0
=d_{0,1}-a_0-a_1
=d_{0,2}-a_0-a_2
=d_{1,1}-2a_1
=d_{1,2}-a_1-a_2
=d_{2,2}-2a_2.
\end{equation}
\end{Definition}

Without loss of generality we assume \(a_0\ge a_1\ge a_2\), so \eqref{compdeg_conic} implies
\(d_{0,0}\ge d_{0,1}\ge d_{0,2}\ge d_{1,1}\ge d_{1,2}\ge d_{2,2}\).

\subsection{Brauer groups and residues}\label{Bra}
Assume for this subsection that \(k\) is a perfect field of characteristic different from two. The Brauer group \(\Br(k)\) is the group of equivalence classes of central simple \(k\)-algebras (CSAs), where
$$
A\sim B \text{ if } A\otimes_k M_m(k)\cong B\otimes_k M_n(k) \text{ for some } m,n\ge 1.
$$
Given \(a,b\in k^{\times}\), the quaternion algebra \((a,b)_k\) is the \(k\)-algebra generated by \(i,j\) with
$$
i^2=a,\: j^2=b,\: ij=-ji.
$$
It is a \(4\)-dimensional CSA over \(k\).

Let \(W\) be an integral variety over \(k\) with function field \(k(W)\).
For \(a,b\in k(W)^{\times}\), write \((a,b)\in \Br(k(W))[2]\) for the Brauer class of the conic
\(C_{(a,b)}=\{a x^2+b y^2 - z^2=0\}\) over \(k(W)\).

Let \(D\subset W\) be a prime divisor. Assume that \(W\) is smooth at the generic point of \(D\) and let \(v_D\) be the associated discrete divisorial valuation. Consider the residue maps
\[
\partial^1_D \colon k(W)^{\times}/k(W)^{\times 2}\longrightarrow \mathbb{Z}/2,\qquad
\partial^1_D(a)\equiv v_D(a)\pmod 2,
\]
and
\[
\partial^2_D \colon \Br(k(W))[2]\longrightarrow k(D)^{\times}/k(D)^{\times 2},\qquad
\partial^2_D(a,b) = (-1)^{v_D(a)v_D(b)}\,\overline{\frac{a^{v_D(b)}}{b^{v_D(a)}}},
\]
where the bar denotes the image in the residue field \(k(D)\).

For \(W=\mathbb{P}^1\), the Faddeev exact sequence yields
\[
0\longrightarrow \Br(k)[2]\longrightarrow \Br(k(t))[2]\longrightarrow \bigoplus_{p\in \mathbb{P}^1} k(p)^{\times}/k(p)^{\times 2},
\]
where the component at \(p\) is \(\partial^2_p\).
Hence, if \((a(t),b(t))=0\) in \(\Br(k(t))\), then all its residues \(\partial^2_p(a(t),b(t))\) vanish.
A conic bundle
\(\mathcal{Q}^1=\{a(t)x^2+b(t)y^2-z^2=0\}\to \mathbb{P}^1\) has a rational section if and only if the Brauer class \((a(t),b(t))\) is zero in \(\Br(k(t))\).
To prove non triviality of \((a(t),b(t))\), it suffices to find a point \(p\) with \(\partial^2_p(a(t),b(t))\ne 0\).
For background on Brauer groups and residues see \cite{CS21}.

\begin{Lemma}\label{hyp}
Let \(X_d\subset \mathbb{P}^{n+1}\) be a hypersurface of degree \(d\) having multiplicity \(m\) along an \(h\)-plane \(\Lambda\), and let \(\widetilde{X}_d\) be the blow-up of \(X_d\) along \(\Lambda\) with exceptional divisor \(\widetilde{E}\subset \widetilde{X}_d\).
Then \(\widetilde{X}_d\) is isomorphic to a divisor of multidegree
\[
(d,\,\underbrace{d-1,\dots,d-1}_{\binom{h+1}{1}},\,\dots,\,\underbrace{d-j,\dots,d-j}_{\binom{h+j}{j}},\,\dots,\,\underbrace{m,\dots,m}_{\binom{h+d-m}{d-m}};\; d-m)
\]
in \(\mathcal{T}_{1,0,\dots,0}\). Moreover \(\widetilde{E}\) is a divisor of bidegree \((m,d-m)\) in \(\mathbb{P}^{n-h}\times \mathbb{P}^{h}\).
\end{Lemma}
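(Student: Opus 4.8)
The plan is to realise $\Bl_\Lambda\PP^{n+1}$ explicitly as a toric variety and to read off the multidegree of the strict transform directly in Cox coordinates. First I would choose homogeneous coordinates on $\PP^{n+1}$ so that $\Lambda=\{x_0=\dots=x_{n-h}=0\}$, with the remaining coordinates denoted $w_0,\dots,w_h$; we may assume $m\ge 1$, so that $\Lambda\subset X_d$ and $\widetilde X_d=\Bl_\Lambda X_d$ is the strict transform of $X_d$ inside $\Bl_\Lambda\PP^{n+1}$. Projection away from $\Lambda$ is a rational map $\PP^{n+1}\dashrightarrow\PP^{n-h}$ resolved by the blow-up, and I would identify $\Bl_\Lambda\PP^{n+1}$ with $\Tcal_{1,0,\dots,0}$, where $\Cox(\Tcal_{1,0,\dots,0})\cong k[t_0,\dots,t_{n-h},z,w_0,\dots,w_h]$ is $\ZZ^2$-graded by $\deg t_i=(1,0)$, $\deg z=(-1,1)$, $\deg w_j=(0,1)$, with irrelevant ideal $(t_0,\dots,t_{n-h})\cap(z,w_0,\dots,w_h)$; the blow-down pulls back the coordinate $x_i$ to the section $t_i z$ and $w_j$ to $w_j$, and the exceptional divisor is $E=\{z=0\}\cong\PP^{n-h}\times\PP^h$ (the first factor being the projectivised normal directions to $\Lambda$, the second being $\Lambda$ itself). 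This is the standard toric picture of a linear blow-up; the only things to get right here are the sign of $\deg z$ and which grading plays which role.

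Next I would feed in the multiplicity hypothesis. Writing the defining polynomial of $X_d$ as $F=\sum_{m\le|\alpha|\le d}x^\alpha g_\alpha(w)$ with $g_\alpha\in k[w_0,\dots,w_h]$ homogeneous of degree $d-|\alpha|$, the condition $\mult_\Lambda X_d=m$ is precisely that this sum starts at $|\alpha|=m$ and that some $g_\alpha$ with $|\alpha|=m$ is nonzero. Substituting $x_i=t_i z$ yields $F(tz,w)=z^m\widetilde F$, where
\[
\widetilde F=\sum_{m\le|\alpha|\le d}t^\alpha\,z^{|\alpha|-m}\,g_\alpha(w).
\]
Since $z\nmid\widetilde F$, the total transform $\mathrm{div}(z^m\widetilde F)=mE+\{\widetilde F=0\}$ has strict transform part $\{\widetilde F=0\}$, which therefore equals $\widetilde X_d$. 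Computing degrees, each monomial $t^\alpha z^{|\alpha|-m}w^\beta$ of $\widetilde F$ has degree $|\alpha|(1,0)+(|\alpha|-m)(-1,1)+(d-|\alpha|)(0,1)=(m,d-m)$, so $\widetilde F$ is homogeneous of bidegree $(m,d-m)$ and $\widetilde X_d$ is a divisor of $H_2$-degree $d-m$ in $\Tcal_{1,0,\dots,0}$.

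It then remains to group the terms of $\widetilde F$ by the fibre monomial $z^a w^\beta$, of $H_2$-degree $a+|\beta|=d-m$. Its coefficient is a form in $t_0,\dots,t_{n-h}$ of degree $m+a$ (forced by $\deg\widetilde F=(m,d-m)$ and $\deg(z^a w^\beta)=(-a,d-m)$), and for a fixed $z$-exponent $a$ these monomials are indexed by $\beta\in\ZZ_{\ge 0}^{h+1}$ with $|\beta|=d-m-a$, hence number $\binom{h+d-m-a}{d-m-a}$. Reindexing by $i=d-m-a$ turns this into: a coefficient of degree $d-i$ occurring with multiplicity $\binom{h+i}{i}$, for $i=0,1,\dots,d-m$ — exactly the asserted multidegree; the identity $\sum_{i=0}^{d-m}\binom{h+i}{i}=\binom{h+d-m+1}{h+1}$ checks that these are precisely all the fibre monomials of degree $d-m$ in the $h+2$ fibre variables, as it must be. Finally, setting $z=0$ kills every term with $|\alpha|>m$, so $\widetilde F|_{z=0}=\sum_{|\alpha|=m}t^\alpha g_\alpha(w)$, a nonzero polynomial bihomogeneous of bidegree $(m,d-m)$ on $E\cong\PP^{n-h}\times\PP^h$; hence $\widetilde E=\widetilde X_d\cap E$ is a divisor of bidegree $(m,d-m)$.

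The argument is essentially a computation and I do not expect a genuine obstacle; the two steps that need a little care are the identification $\Bl_\Lambda\PP^{n+1}\cong\Tcal_{1,0,\dots,0}$ with the correct $\ZZ^2$-grading, and the fact that the naive equation $\widetilde F=F(tz,w)/z^m$ already cuts out the strict transform, which is exactly where one uses that the multiplicity is precisely $m$ (so $z\nmid\widetilde F$). After that, matching the binomial multiplicities is just a reindexing.
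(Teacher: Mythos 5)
Your proposal is correct and follows essentially the same route as the paper: identify $\Bl_\Lambda\PP^{n+1}$ with $\Tcal_{1,0,\dots,0}$ via its Cox ring, substitute the blow-down coordinates into the defining equation, divide by the $m$-th power of the exceptional coordinate to get the strict transform, and read off the multidegree and the bidegree of $\widetilde E$ from the $\ZZ^2$-grading. Your write-up is in fact slightly more explicit than the paper's on two points it leaves implicit — that exact multiplicity $m$ guarantees $z\nmid\widetilde F$, and the binomial count $\binom{h+j}{j}$ of coefficients of each degree $d-j$ — but the underlying argument is identical.
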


\begin{proof}
We may assume \(\Lambda=\{z_0=\cdots=z_{n-h}=0\}\) and
\[
X_d=\Big\{\;\sum_{m_0+\cdots+m_{n-h}=m} z_0^{m_0}\cdots z_{n-h}^{m_{n-h}}\,
A_{m_0,\dots,m_{n-h}}(z_0,\dots,z_{n+1})=0\;\Big\}\subset \mathbb{P}^{n+1},
\]
with \(A_{m_0,\dots,m_{n-h}}\in k[z_0,\dots,z_{n+1}]_{d-m}\).
The blow-up of \(\mathbb{P}^{n+1}\) along \(\Lambda\) is the simplicial toric variety \(\mathcal{T}\) with Cox ring
\(\Cox(\mathcal{T})\cong k[x_0,\dots,x_{n-h},y_0,\dots,y_{h+1}]\) and \(\mathbb{Z}^2\)-grading
\[
\left(
\begin{array}{cccccccc}
x_0 & \dots & x_{n-h} & y_0 & y_1 & \dots & y_{h+1}\\\hline
1   & \dots & 1       & 0   & 1   & \dots & 1\\
-1  & \dots & -1      & 1   & 0   & \dots & 0
\end{array}\right).
\]
After elementary row operations this becomes
\stepcounter{thm}
\begin{equation}\label{gM2}
\left(
\begin{array}{cccccccc}
x_0 & \dots & x_{n-h} & y_0 & y_1 & \dots & y_{h+1}\\\hline
1   & \dots & 1       & -1  & 0   & \dots & 0\\
0   & \dots & 0       & 1   & 1   & \dots & 1
\end{array}\right),
\end{equation}
so \(\mathcal{T}=\mathcal{T}_{1,0,\dots,0}\).
The blow-down is
\[
\phi\colon \mathcal{T}_{1,0,\dots,0}\to \mathbb{P}^{n+1},\qquad
(x_0,\dots,x_{n-h},y_0,\dots,y_{h+1})\mapsto [x_0y_0:\cdots:x_{n-h}y_0:y_1:\dots:y_{h+1}],
\]
and the exceptional divisor is \(E=\{y_0=0\}\).
Thus the strict transform is
$$
\widetilde{X}_d=\Big\{\; \sum_{m_0+\cdots+m_{n-h}=m}
x_0^{m_0}\cdots x_{n-h}^{m_{n-h}}\,
A_{m_0,\dots,m_{n-h}}(x_0y_0,\dots,x_{n-h}y_0,y_1,\dots,y_{h+1})=0\;\Big\}\subset \mathcal{T}_{1,0,\dots,0},
$$
which yields the asserted multidegree. From \eqref{gM2} we have
\(E\cong \mathbb{P}^{n-h}_{(x_0,\dots,x_{n-h})}\times \mathbb{P}^h_{(y_1,\dots,y_{h+1})}\),
and \(\widetilde{E}=\widetilde{X}_d\cap E\) has bidegree \((m,d-m)\).
\end{proof}

\begin{Definition}\label{def:splitting}
A \emph{splitting conic bundle} \(\pi\colon \mathcal{Q}^1\to \mathbb{P}^{n-1}\) is given by an equation
\stepcounter{thm}
\begin{equation}\label{Cox_g}
\mathcal{Q}^1=\Big\{\; \sum_{0\le i\le j\le 2} \sigma_{i,j}(x_0,\dots,x_{n-1})\, y_i y_j \;=\; 0\;\Big\} \subset
\mathbb{P}\!\big(\mathcal{O}_{\mathbb{P}^{n-1}}(a_0)\oplus \mathcal{O}_{\mathbb{P}^{n-1}}(a_1)\oplus \mathcal{O}_{\mathbb{P}^{n-1}}(a_2)\big),
\end{equation}
where \(\sigma_{i,j}\) is homogeneous of degree \(d_{i,j}\), and the degrees satisfy
\stepcounter{thm}
\begin{equation}\label{compdeg}
d_{0,0}-2a_0=d_{0,1}-a_0-a_1=d_{0,2}-a_0-a_2=d_{1,1}-2a_1=d_{1,2}-a_1-a_2=d_{2,2}-2a_2.
\end{equation}
The multidegree is \((d_{0,0},d_{0,1},d_{0,2},d_{1,1},d_{1,2},d_{2,2})\in \mathbb{Z}^6\).
\end{Definition}

For a splitting conic bundle of multidegree \((d_{0,0},\dots,d_{2,2})\) the discriminant has degree
\(d_{\mathcal{Q}^1}=d_{0,0}+d_{1,1}+d_{2,2}\).
Assuming \(a_0\ge a_1\ge a_2\), \eqref{compdeg} gives \(d_{0,0}\ge d_{1,1}\ge d_{2,2}\), and the nonzero diagonal degrees \(d_{i,i}\) have the same parity.

Seeing as $\P(\mathcal E) = \P(\mathcal E(i))$ for any vector bundle $\mathcal E$ and any twist $i$, we can choose our $a_0,a_1,a_2$ such that the equation \eqref{compdeg} equals 0, in other words $a_0 = d_{0,0}/2,\ a_1= d_{1,1}/2,\ a_2 = d_{2,2}/2$.

Let \(N(n-1,d)\coloneqq\binom{d+n-1}{\,n-1\,}\). Splitting conic bundles of multidegree
\((d_{0,0},\dots,d_{2,2})\) over \(\mathbb{P}^{n-1}\) correspond to points of
\[
V^{\,n-1}_{d_{0,0},\dots,d_{2,2}}=
\bigoplus_{0\le i\le j\le 2} k^{\,N(n-1,d_{i,j})},
\]
up to rescaling. We call \(\mathcal{Q}^1\) general if it corresponds to a general point of
\(\mathbb{P}\!\big(V^{\,n-1}_{d_{0,0},\dots,d_{2,2}}\big)\).

\begin{Remark}\label{Gro}
By the Birkhoff–Grothendieck splitting theorem \cite[Theorem~4.1]{HM82}, every vector bundle on \(\mathbb{P}^1\) splits as a sum of line bundles; in particular, every conic bundle \(\mathcal{Q}^1\to \mathbb{P}^1\) is splitting.
\end{Remark}

\begin{Remark}[Lang’s theorem]\label{lang}
Fix \(r\in \mathbb{R}_{\ge 0}\). A field \(k\) is \(C_r\) if and only if every homogeneous form \(f\in k[x_0,\dots,x_n]_d\) with \(d>0\) in \(n+1\) variables has a nontrivial zero whenever \(n+1>d^{\,r}\).
If \(k\) is \(C_r\) and \(f_1,\dots,f_s\in k[x_0,\dots,x_n]_d\) are of the same degree with \(n+1> s\, d^{\,r}\), then they have a nontrivial common zero \cite[Prop.~1.2.6]{Poo17}; moreover \(k(t)\) is \(C_{r+1}\) \cite[Theorem.~1.2.7]{Poo17}.
Let \(\pi\colon \mathcal{Q}^1\to \mathbb{P}^{n-1}\) be a conic bundle over a \(C_r\) field \(k\) with generic fiber \(\mathcal{Q}^1_{\eta}\) over \(F=k(t_1,\dots,t_{n-1})\).
If \(\mathcal{Q}^1_{\eta}(F)\neq \varnothing\) then the projection from an \(F\)-point shows that \(\mathcal{Q}^1_{\eta}\) is \(F\)-rational, hence \(\mathcal{Q}^1\) is \(k\)-rational.
\end{Remark}
The $C_1$ property for a field is also known as being \emph{quasi-algebraically closed}.

We will also use the following formulation of Enriques’ unirationality criterion \cite[Proposition~10.1.1]{IP99}.

\begin{Proposition}\label{Enr}
Let \(\pi\colon \mathcal{Q}^1\to W\) be a conic bundle over a unirational variety \(W\).
Then \(\mathcal{Q}^1\) is unirational if and only if there exists a unirational subvariety \(Z\subset \mathcal{Q}^1\) such that \(\pi_{|Z}\colon Z\to W\) is dominant.
\end{Proposition}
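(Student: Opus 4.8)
The plan is to dispatch the two implications separately, with all the content in the converse. For the ``only if'' direction there is nothing to do: if $\mathcal{Q}^1$ is unirational one simply takes $Z=\mathcal{Q}^1$, which is a unirational subvariety of itself and for which $\pi|_Z=\pi$ is dominant, a conic bundle being surjective onto its base. So I would concentrate on the ``if'' direction, which I would prove by trading the hypothesis ``$Z$ dominates $W$'' for ``the base change of $\pi$ along $Z$ has a rational section''.

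Concretely, given a unirational $Z\subseteq\mathcal{Q}^1$ with $\pi|_Z$ dominant, I would form the fiber product $\mathcal{Q}^1_Z:=\mathcal{Q}^1\times_W Z$ (using $\pi$ on the first factor and $\pi|_Z$ on the second), with its two projections $p_1\colon\mathcal{Q}^1_Z\to\mathcal{Q}^1$ and $p_2\colon\mathcal{Q}^1_Z\to Z$. Working over a dense open $U\subseteq W$ on which $\pi$ restricts to a smooth conic bundle, $p_2$ becomes a smooth conic bundle over $Z$ (a base change of one), and it is irreducible since the generic fiber of $\pi$ — a smooth, hence geometrically irreducible, conic over $k(W)$ — stays a smooth conic over the larger field $k(Z)$. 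The crucial observation is that $p_2$ now carries a rational section, namely $z\mapsto(z,z)$, where the first coordinate is the inclusion $Z\hookrightarrow\mathcal{Q}^1$ and the second the identity; these agree over $W$ because $\pi(z)=\pi|_Z(z)$. A conic bundle with a rational section is birational over its base to $(\text{base})\times\PP^1$, the generic fiber being a smooth conic over $k(Z)$ with a $k(Z)$-point, hence isomorphic to $\PP^1_{k(Z)}$; therefore $\mathcal{Q}^1_Z$ is birational to $Z\times\PP^1$.

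Finally I would transfer unirationality downwards along $p_1$: since $Z$ is unirational, so is $Z\times\PP^1$, hence so is $\mathcal{Q}^1_Z$, and choosing a dominant rational map $\PP^N\dashrightarrow\mathcal{Q}^1_Z$ and postcomposing with $p_1$ exhibits $\mathcal{Q}^1$ as dominated by $\PP^N$, provided $p_1$ is dominant. Dominance of $p_1$ holds because over $U$ the map $\pi$ is open, the image of $Z$ in $W$ is dense, and $p_1(\mathcal{Q}^1_Z)$ contains $\pi^{-1}$ of that dense image. (Note that in the ``if'' direction the hypothesis that $W$ is unirational is automatic, $W$ being dominated by $Z$; it is stated only for symmetry with the forward implication.)

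The only technical points — dominance of $p_1$ and irreducibility of $\mathcal{Q}^1_Z$ — are routine once one restricts over a dense open of $W$ on which $\pi$ is a smooth conic bundle, so I do not anticipate a genuine obstacle. The closest thing to a subtlety is the bookkeeping needed to talk about ``the'' generic fiber of $p_2$ (hence about $k(Z)$), which is exactly what irreducibility of $\mathcal{Q}^1_Z$ secures; the whole argument rests on the single fact that base-changing a conic bundle along a multisection produces a conic bundle with a section, together with the elementary fact that a section birationally trivializes a conic bundle over its base.
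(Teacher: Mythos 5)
Your proposal is correct and the substantive (``if'') direction is essentially identical to the paper's proof: form the fiber product $\mathcal{Q}^1\times_W Z$, observe the diagonal rational section, use the resulting $k(Z)$-point on the generic fiber to trivialize birationally to $Z\times\PP^1$, and descend unirationality along the dominant first projection. The only cosmetic difference is in the trivial direction, where you take $Z=\mathcal{Q}^1$ itself while the paper slices a dominant parametrization by a general hyperplane to get a lower-dimensional $Z$; both are valid for the statement as written.
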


\begin{proof}
If \(\mathcal{Q}^1\) is unirational, there is a dominant rational map \(\psi\colon \mathbb{P}^N \dashrightarrow \mathcal{Q}^1\).
For a general \((N-1)\)-plane \(H\subset \mathbb{P}^N\), the closure \(Z=\overline{\psi(H)}\) is unirational and transverse to the fibration, hence \(\pi_{|Z}\) is dominant.

Conversely, assume \(Z\subset \mathcal{Q}^1\) is unirational and \(\pi_{|Z}\) is dominant. Consider the fiber product
\[
\begin{tikzpicture}[xscale=2.6,yscale=-1.3]
\node (A0_0) at (0, 0) {$\mathcal{Q}^1\times_{W} Z$};
\node (B)     at (1, 0) {$\mathcal{Q}^1$};
\node (A1_0)  at (0, 1) {$Z$};
\node (A1_1)  at (1, 1) {$W$};
\path (A0_0) edge [->] (B);
\path (B)    edge [->] node [auto] {$\scriptstyle \pi$} (A1_1);
\path (A1_0) edge [->] node [auto] {$\scriptstyle \pi_{|Z}$} (A1_1);
\path (A0_0) edge [->] (A1_0);
\end{tikzpicture}
\]
The projection \(\mathcal{Q}^1\times_W Z\to Z\) is a conic bundle admitting a rational section \(Z\dashrightarrow \mathcal{Q}^1\times_W Z\).
Such a section yields a \(k(Z)\)-point on the generic fiber, and projection from this point gives a birational isomorphism
\(\mathcal{Q}^1\times_W Z \sim_{\mathrm{bir}} Z\times \mathbb{P}^1\).
Since \(Z\) is unirational, so is \(\mathcal{Q}^1\times_W Z\), and hence \(\mathcal{Q}^1\) is unirational.
\end{proof}

\begin{Remark}\label{not0}
For a splitting conic bundle \eqref{Cox_g}, if one of the diagonal coefficients vanishes identically, say \(\sigma_{0,0}\equiv 0\), then \(\{y_1=y_2=0\}\) gives a rational section of \(\pi\colon \mathcal{Q}^1\to \mathbb{P}^{n-1}\), so \(\mathcal{Q}^1\) is rational \cite[Definition~21]{Sc19a}.
More generally, if \(d_{i,i}<0\) then \(\sigma_{i,i}=0\) and we are in the same situation. Throughout we therefore assume \(d_{i,i}\ge 0\) and \(\sigma_{i,i}\not\equiv 0\) for \(i=0,1,2\).
\end{Remark}

\begin{Remark}\label{topsiR}
For a surface conic bundle (\(n=2\)) one has \(({-}K_{\mathcal{Q}^1})^2=8-d_{\mathcal{Q}^1}\) \cite[p.~5]{Ko17}.
In particular, \(d_{\mathcal{Q}^1}=8\) is the minimal number of singular fibers for which a conic bundle surface is not del Pezzo.
\end{Remark}

\begin{Proposition}\label{diag}
Let \(k\) be a field with \(\operatorname{char} k\ne 2\), and let \(\alpha_0,\dots,\alpha_5\in k\) with \(\alpha_0\ne 0\) and \(\alpha_1^2-4\alpha_0\alpha_3\ne 0\).
The quadratic form
\[
\mathcal{Q}=\alpha_0 y_0^2+\alpha_1 y_0y_1+\alpha_2 y_0y_2+\alpha_3 y_1^2+\alpha_4 y_1y_2+\alpha_5 y_2^2
\]
in \(y_0,y_1,y_2\) is congruent over \(k\) to
\[
\mathcal{Q}'=\alpha_0 z_0^2+\alpha_0(4\alpha_0\alpha_3-\alpha_1^2) z_1^2
+4(4\alpha_0\alpha_3-\alpha_1^2)\,\delta_{\mathcal{Q}}\, z_2^2,
\]
where
\[
\delta_{\mathcal{Q}}=\alpha_0\alpha_3\alpha_5-\tfrac{1}{4}\alpha_0\alpha_4^2
-\tfrac{1}{4}\alpha_1^2\alpha_5+\tfrac{1}{4}\alpha_1\alpha_2\alpha_4-\tfrac{1}{4}\alpha_2^2\alpha_3
\]
is the discriminant of \(\mathcal{Q}\).
\end{Proposition}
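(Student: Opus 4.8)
The plan is to diagonalize $\mathcal{Q}$ by two successive completions of the square and then rescale two of the new variables; since rescaling a variable by a nonzero scalar is an invertible $k$-linear change, it preserves the congruence class over $k$, so this suffices. The hypotheses are exactly what make the two completions legal: $\alpha_0\neq 0$ lets me complete the square in $y_0$, and the resulting coefficient of $y_1^2$, namely $\tfrac{4\alpha_0\alpha_3-\alpha_1^2}{4\alpha_0}$, is nonzero precisely because $\alpha_1^2-4\alpha_0\alpha_3\neq 0$, which lets me complete the square in $y_1$. All divisions by $2$ are legitimate since $\operatorname{char}k\neq 2$.

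Concretely, I would first set $z_0=y_0+\tfrac{\alpha_1}{2\alpha_0}y_1+\tfrac{\alpha_2}{2\alpha_0}y_2$, so that
\[
\mathcal{Q}=\alpha_0 z_0^2+\beta_3 y_1^2+\beta_4 y_1y_2+\beta_5 y_2^2,\qquad
\beta_3=\alpha_3-\frac{\alpha_1^2}{4\alpha_0},\quad
\beta_4=\alpha_4-\frac{\alpha_1\alpha_2}{2\alpha_0},\quad
\beta_5=\alpha_5-\frac{\alpha_2^2}{4\alpha_0},
\]
and then, since $\beta_3=\tfrac{4\alpha_0\alpha_3-\alpha_1^2}{4\alpha_0}\neq 0$, set $w_1=y_1+\tfrac{\beta_4}{2\beta_3}y_2$ to obtain
\[
\mathcal{Q}=\alpha_0 z_0^2+\beta_3 w_1^2+\frac{4\beta_3\beta_5-\beta_4^2}{4\beta_3}\,y_2^2 .
\]
The crucial point is then the elementary identity $\alpha_0\bigl(4\beta_3\beta_5-\beta_4^2\bigr)=4\delta_{\mathcal{Q}}$, obtained by clearing denominators and expanding; combined with $4\alpha_0\beta_3=4\alpha_0\alpha_3-\alpha_1^2$ it gives $\tfrac{4\beta_3\beta_5-\beta_4^2}{4\beta_3}=\tfrac{4\delta_{\mathcal{Q}}}{4\alpha_0\alpha_3-\alpha_1^2}$. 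This simultaneously proves the coefficient formula and confirms that $\delta_{\mathcal{Q}}$ is the discriminant of $\mathcal{Q}$ (alternatively, $\delta_{\mathcal{Q}}=\det$ of the Gram matrix $\tfrac12(\partial_i\partial_j\mathcal{Q})$, which scales by a square under congruence, consistently with the diagonal form $\mathcal{Q}'$).

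To finish I would rescale: replacing $w_1$ by $2\alpha_0 z_1$ turns $\beta_3$ into $\beta_3(2\alpha_0)^2=\alpha_0(4\alpha_0\alpha_3-\alpha_1^2)$, and replacing $y_2$ by $(4\alpha_0\alpha_3-\alpha_1^2)z_2$ turns $\tfrac{4\delta_{\mathcal{Q}}}{4\alpha_0\alpha_3-\alpha_1^2}$ into $4(4\alpha_0\alpha_3-\alpha_1^2)\delta_{\mathcal{Q}}$, which is exactly $\mathcal{Q}'$; if $\delta_{\mathcal{Q}}=0$ the third term is absent on both sides and the last rescaling is vacuous. The argument is routine once the bookkeeping is organized, and the only step with genuine content is the verification of the identity $\alpha_0(4\beta_3\beta_5-\beta_4^2)=4\delta_{\mathcal{Q}}$, which I expect to be the main (purely computational) obstacle.
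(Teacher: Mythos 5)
Your proof is correct and is essentially the paper's argument: the paper simply writes down the orthogonal basis $e_1,e_2,e_3$ that your two completions of the square followed by the rescalings by $2\alpha_0$ and $4\alpha_0\alpha_3-\alpha_1^2$ produce (indeed, solving your substitutions for the vector corresponding to $z_2$ recovers the paper's $e_3$ exactly). The key identity $\alpha_0(4\beta_3\beta_5-\beta_4^2)=4\delta_{\mathcal{Q}}$ checks out, so nothing is missing.
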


\begin{proof}
The vectors
\(e_1=(1,0,0)\), \(e_2=(\alpha_1,2\alpha_0,0)\), and
\(e_3=(\alpha_1\alpha_4-2\alpha_2\alpha_3,\;\alpha_1\alpha_2-2\alpha_0\alpha_4,\;-\alpha_1^2+4\alpha_0\alpha_3)\)
form a basis orthogonal with respect to \(\mathcal{Q}\), and \(\mathcal{Q}\) takes the stated diagonal form in this basis.
\end{proof}


\begin{Lemma}\label{sect}
	Let \( \left(\mathcal{Q}_u, s_u\right)\) be a family of conic bundles with a section, with \[\mathcal{Q}_u =\{F_u(x_0,y_0,y_1)=0\}\subset \mathbb{A}^3,\] parametrized by $u\in k^\times$, such that the family $\left(\mathcal{Q}_u \right)$  degenerates to a smooth conic bundle \(\mathcal{Q}_0=\{F_0(x_0,y_0,y_1)=0\}\).
	Then  \(\mathcal{Q}_0\) also has a section.
\end{Lemma}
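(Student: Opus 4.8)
The plan is to reinterpret a rational section of a conic bundle over the line $\mathbb{A}^1_{x_0}$ as a rational point on its generic fibre --- a smooth conic over $K:=k(x_0)$ --- and then to specialise that point from $u\neq 0$ to $u=0$ by a valuative argument over the discrete valuation ring attached to the parameter $u$.

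First I would fix the bookkeeping. After clearing denominators in $u$ we may take $F\in k[u][x_0,y_0,y_1]$, of degree at most $2$ in $(y_0,y_1)$, with $F|_{u=0}$ the equation of $\mathcal{Q}_0$. Homogenising the fibre coordinates by a new variable $y_2$ gives a ternary quadratic form $\widetilde{F}$ in $y_0,y_1,y_2$ with coefficients in $\mathcal{O}:=K[u]_{(u)}$, a DVR with uniformiser $u$, fraction field $L:=k(u,x_0)$, and residue field $K$. Set $\overline{\mathcal{C}}=\{\widetilde{F}=0\}\subset\mathbb{P}^2_{\mathcal{O}}$. Since homogenisation commutes with the reduction $\mathcal{O}\to K$, the special fibre of $\overline{\mathcal{C}}\to\Spec\mathcal{O}$ is the projective closure $\overline{C_0}$ of the generic fibre of $\pi_0\colon\mathcal{Q}_0\to\mathbb{A}^1_{x_0}$ over $K$, while its generic fibre $\overline{C}$ is the projective conic over $L$ attached to the generic fibre of the total family.

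The crux is that $\overline{\mathcal{C}}\to\Spec\mathcal{O}$ is proper and smooth. The discriminant of $\widetilde{F}$ is an element of $\mathcal{O}$ that reduces modulo $u$ to the discriminant of the ternary form cutting out $\overline{C_0}$; the latter is nonzero precisely because $\mathcal{Q}_0$ is still a genuine conic bundle, so its generic fibre is a smooth conic. Hence the discriminant of $\widetilde{F}$ lies outside $(u)$, i.e.\ is a unit of $\mathcal{O}$, and $\overline{\mathcal{C}}\to\Spec\mathcal{O}$ is a smooth proper conic bundle over the DVR; in particular $\overline{C}$ is itself a smooth conic over $L$. Now the section $s_u$, being part of the family data, varies algebraically with $u$, and hence over the generic point of the base it produces an $L$-point of $\overline{C}\subset\overline{\mathcal{C}}$. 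By the valuative criterion of properness this $L$-point extends to an $\mathcal{O}$-point of $\overline{\mathcal{C}}$, whose reduction modulo $u$ is a $K$-point of the special fibre $\overline{C_0}$. Finally, a smooth conic over $K$ possessing a $K$-point is isomorphic to $\mathbb{P}^1_K$, so it has $K$-points on the dense open affine part --- that is, on the generic fibre of $\pi_0$ --- and spreading such a point over $\mathbb{A}^1_{x_0}$ yields a rational section of $\pi_0$. Thus $\mathcal{Q}_0$ has a section.

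The only genuine content is the properness step, which rests entirely on the discriminant remaining nonzero after specialisation --- equivalently on the hypothesis that the limit $\mathcal{Q}_0$ is still a conic bundle, rather than an object whose ``generic fibre'' has degenerated. Everything else is formal: interpreting a section as a generic-fibre point, the harmless clearing of $u$-denominators and homogenisation so that projectivisation commutes with specialisation, and the valuative criterion. I would remark in passing that one can argue equivalently through Brauer classes: for the total family the quaternion symbol of the fibre conic is trivial over $k(u,x_0)$ since the family section supplies a point, hence its residue along $\{u=0\}$ vanishes and the specialised symbol over $k(x_0)$ is again the trivial class; but tracking residues under specialisation is a shade fussier than the valuative argument, so I would present the latter.
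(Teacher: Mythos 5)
Your proof is correct, and it takes a genuinely different and in fact more robust route than the paper's. The paper argues by brute force on the parametrization: it writes $s_u(t)=(t,a_u(t),b_u(t))$, sets $s_0=\lim_{u\to 0}s_u$, observes that a collapse of the limiting section to a point would force a singularity on $\mathcal{Q}_0$ (excluded by smoothness), and concludes from $F_u(s_u(t))=0$ that $F_0(s_0(t))=0$ by continuity. The weak point of that argument --- why the limit $(a_0,b_0)$ exists at all, given that the coefficients of $a_u,b_u$ are rational in $u$ and could a priori acquire poles at $u=0$ --- is precisely what your valuative-criterion step handles cleanly: by passing to the generic fibre over $K=k(x_0)$, homogenising, and noting that the discriminant is a unit in $\mathcal{O}=K[u]_{(u)}$ because its reduction is the (nonzero) discriminant of the smooth conic attached to $\mathcal{Q}_0$, you get a smooth proper conic over the DVR, so the $L$-point coming from the family of sections automatically extends to an $\mathcal{O}$-point and specialises to a $K$-point of the special fibre; a smooth conic over $K$ with a $K$-point is $\mathbb{P}^1_K$, giving a rational section of $\pi_0$. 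Both arguments hinge on the same hypothesis (smoothness, equivalently non-degeneracy, of the limit $\mathcal{Q}_0$), and both produce only a \emph{rational} section, which is what the lemma is used for (vanishing of the Brauer class). Your version trades the paper's informal limit for properness over a DVR, which is the standard and fully rigorous way to make the specialisation argument work; the only implicit assumption you share with the paper is that the sections $s_u$ are given by data algebraic in $u$, which is what ``family of conic bundles with a section'' is meant to encode.
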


\begin{proof}
Let
\[
s_u\colon \mathbb{A}^1\to \mathcal{Q}_u,\qquad t\mapsto (t,a_u(t),b_u(t)),
\]
be the family of sections. Set $(a_0, b_0) \coloneqq \lim_{u\to 0} (a_u, b_u)$, and set \(s_0=\lim_{u\to 0}s_u=(t,a_0(t),b_0(t))\).
The collapse of \(s_u\) to a point would produce a singularity on \(\mathcal{Q}_0\), which is excluded.
Since \(F_u(t,a_u(t),b_u(t))=0\) for all \(u\ne 0\) and \(t\), we get
\[
F_0(t,a_0(t),b_0(t))=\lim_{u\to 0} F_u(t,a_u(t),b_u(t))=0,
\]
so \(s_0\) is a section of \(\mathcal{Q}_0\).
\end{proof}

From now on, we restrict ourselves to considering conic bundles $\mathcal{Q}^1$ over $\mathbb{P}^1$ with eight singular fibers. Then $\mathcal{Q}^1\subset \P(\mathcal{E}_{a_0,a_1,a_2})$ for which the triple $(a_0,a_1,a_2)$ is one of $(2,1,1),\, (2,2,0),\, (3,1,0),\, $ or $(4,0,0)$. {As mentioned in the Introduction,} given $\mathcal Q^1$, we call the corresponding triple $(a_0,a_1,a_2)$ its type. We shall now analyze the conic bundles of each type separately.

\begin{Remark}
	As can be seen by writing out explicit equations, if a conic bundle $\mathcal Q^1$ belongs to more than one type, in its equation \eqref{Cox_g}, at least one of the $\sigma_{i,i}$ must vanish. In this case $\mathcal Q^1$ is always rational by Remark \ref{not0}, and we shall not consider such bundles further.
\end{Remark}

\section{Conic bundles of type \texorpdfstring{$\TypeTwo{2}{1}{1}$}{(2,1,1)}}\label{sec:433222}
We start by considering conic bundles of the form
\[
\mathcal{Q}^1
=\{\sigma_{0,0}y_0^2+\sigma_{0,1}y_0y_1+\sigma_{0,2}y_0y_2+\sigma_{1,1}y_1^2+\sigma_{1,2}y_1y_2+\sigma_{2,2}y_2^2=0\}
\subset \Tcal_{2,1,1},
\]
with $\sigma_{0,0}\in k[x_0,x_1]_4$, $\sigma_{0,1},\sigma_{0,2}\in k[x_0,x_1]_3$, and $\sigma_{1,1},\sigma_{1,2},\sigma_{2,2}\in k[x_0,x_1]_2$. By Lemma~\ref{hyp}, the image of $\mathcal{Q}^1$ via
\[
\begin{array}{cccc}
 \phi: & \Tcal_{2,1,1} & \longrightarrow & \mathbb{P}^{3}\\
  & (x_0,x_{1},y_0,y_1,y_{2}) & \mapsto & [\,x_0y_0:x_{1}y_0:y_1:y_{2}\,]
\end{array}
\]
is a quartic surface $X_4\subset\mathbb{P}^3$ with double points along the line $\Lambda\coloneqq\{z_0=z_1=0\}$, and $\mathcal{Q}^1$ is the blow-up of $X_4$ along $\Lambda$. Write
\stepcounter{thm}
\begin{equation}\label{eq:pol433222}
\begin{aligned}
\sigma_{0,0} &= a_0x_0^4 + a_1x_0^3x_1 + a_2x_0^2x_1^2 + a_3x_0x_1^3 + a_4x_1^4,\\
\sigma_{0,1} &= b_0x_0^3 + b_1x_0^2x_1 + b_2x_0x_1^2 + b_3x_1^3,\\
\sigma_{0,2} &= c_0x_0^3 + c_1x_0^2x_1 + c_2x_0x_1^2 + c_3x_1^3,\\
\sigma_{1,1} &= d_0x_0^2 + d_1x_0x_1 + d_2x_1^2,\\
\sigma_{1,2} &= g_0x_0^2 + g_1x_0x_1 + g_2x_1^2,\\
\sigma_{2,2} &= h_0x_0^2 + h_1x_0x_1 + h_2x_1^2.
\end{aligned}
\end{equation}

\begin{Proposition}\label{prop:TTUni433222}
Assume the base field $k$ is $C_1$. If there exist two points $p,q\in \mathcal{Q}^1\setminus\{y_0=0\}$ on distinct fibers of the conic bundle such that
\(
T_{\phi(p)}X_4 = T_{\phi(q)}X_4,
\)
and $\mathcal{Q}^1$ is otherwise general, then $\mathcal{Q}^1$ is unirational.
\end{Proposition}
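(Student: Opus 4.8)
The strategy is to exhibit a rational $2$-section of $\pi$ and conclude by Enriques' criterion (Proposition~\ref{Enr}). By our conventions $p,q$ are $k$-rational (and even if one allows a $\Gal$-conjugate pair, the common tangent hyperplane below is Galois-stable), so
\[
H\ \coloneqq\ T_{\phi(p)}X_4\ =\ T_{\phi(q)}X_4
\]
is a hyperplane of $\PP^3$ defined over $k$, tangent to the quartic surface $X_4$ of Lemma~\ref{hyp} at the two distinct points $\phi(p)$ and $\phi(q)$. I would then form the plane quartic $C\coloneqq H\cap X_4\subset H\cong\PP^2_k$ and take $\widehat C\subset\mathcal Q^1$ to be its strict transform under the birational contraction $\phi\colon\mathcal Q^1\to X_4$. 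Writing $H=\{\sum_i\alpha_iz_i=0\}$, one has $\widehat C=\widetilde H\cap\mathcal Q^1$ with $\widetilde H=\{(\alpha_0x_0+\alpha_1x_1)y_0+\alpha_2y_1+\alpha_3y_2=0\}\subset\Tcal_{2,1,1}$; since $\widetilde H$ restricts to a line on every $\PP^2$-fibre of $\overline{\pi}$, the induced morphism $\pi_{|\widehat C}\colon\widehat C\to\PP^1$ has degree $2$, and since $\phi$ is an isomorphism near $\phi(p),\phi(q)$ the points $p,q$ lie on $\widehat C$; as they sit on distinct fibres of $\pi$ by hypothesis, $\pi_{|\widehat C}$ is dominant.

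The heart of the matter is that $C$ is a rational curve, which I would obtain from a node count. For $\mathcal Q^1$ otherwise general one has $\Sing(X_4)=\Lambda$, and $C$ then carries exactly three double points: an ordinary node at each of $\phi(p)$ and $\phi(q)$---a tangent hyperplane section of a general surface at a general point has a node there---together with a third double point at $H\cap\Lambda$, forced because $X_4$ has multiplicity $2$ along $\Lambda$ while $H$ is transverse to $\Lambda$. Since $C$ is a plane quartic one has $p_a(C)=3$, so the three double points force $C$ to have geometric genus $0$; being moreover geometrically irreducible for general $\mathcal Q^1$, $C$---and hence its strict transform $\widehat C$---is a geometrically rational curve defined over $k$. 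At this point the $C_1$ hypothesis enters: the normalization $\widehat C^{\,\nu}$ is a smooth, projective, geometrically irreducible curve of genus $0$ over $k$, i.e.\ a conic, and a conic over a $C_1$ field has a $k$-point (Remark~\ref{lang}), so $\widehat C^{\,\nu}\cong\PP^1_k$ and $\widehat C$ is unirational over $k$. Proposition~\ref{Enr}, applied with $W=\PP^1$ and the unirational subvariety $Z=\widehat C$ on which $\pi$ is dominant, then gives the unirationality of $\mathcal Q^1$.

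The part that requires care---precisely what ``$\mathcal Q^1$ otherwise general'' is meant to absorb---is the bookkeeping of open conditions: that $\Sing(X_4)=\Lambda$; that the two tangency points are honest nodes and $H$ is not tritangent; that $H$ meets $\Lambda$ transversally; that $C$ is geometrically irreducible with no further singularities; and that $\widetilde H\cap\mathcal Q^1$ has no component contained in the exceptional curve $\widetilde E=\{y_0=0\}$, so that indeed $\widehat C=\widetilde H\cap\mathcal Q^1$ as reduced schemes. Each of these is a Zariski open condition on the incidence variety parametrizing triples $(\mathcal Q^1,p,q)$ with $T_{\phi(p)}X_4=T_{\phi(q)}X_4$, so it suffices to exhibit a single such triple where they all hold. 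The genuinely indispensable structural input, which I would emphasize, is the third node: it is precisely the double line $\Lambda$ of the quartic model---special to the type $(2,1,1)$ with $d_S=8$---that drops $p_a(C)=3$ all the way down to geometric genus $0$; with only the two tangency nodes one would be left with an elliptic $2$-section, which is never unirational, so the double line geometry and the $C_1$ hypothesis are both doing essential work.
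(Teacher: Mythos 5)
Your proof is correct and follows essentially the same route as the paper: both take the bitangent hyperplane section $C_4=X_4\cap T_{\phi(p)}X_4$, observe that it is a $2$-section of $\pi$ carrying double points at $\phi(p)$, $\phi(q)$ and at $T_{\phi(p)}X_4\cap\Lambda$, deduce its rationality over the $C_1$ field $k$, and conclude by Proposition~\ref{Enr}. The only inessential difference is in how rationality of the trinodal quartic is established: the paper applies the quadratic Cremona transformation centered at the three double points to land explicitly on a conic, whereas you use the arithmetic genus count $p_a=3$ together with the anticanonical conic model of the normalization --- two equivalent ways of invoking the $C_1$ hypothesis.
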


\begin{proof}
Set $p'=\phi(p)$ and $q'=\phi(q)$; note $p',q'\notin\Lambda$. Then $T_{p'}X_4=T_{q'}X_4$, and
\(
C_4\coloneqq X_4\cap T_{p'}X_4
\)
is a quartic plane curve singular at $p',q'$, and at $x\coloneqq T_{p'}X_4\cap\Lambda$. For a plane $H\subset\mathbb{P}^3$ containing $\Lambda$, the residual intersection $H\cap X_4=2\Lambda + C_H$ is a conic $C_H$, and $H\cap T_{p'}X_4$ meets $C_H$ in two points lying on $C_4$, so $C_4$ is a $2$-section of $\mathcal{Q}^1\to\mathbb{P}^1$.

If $C_4$ has a triple point, it splits as the union of two sections, hence $\mathcal{Q}^1$ is rational. Otherwise $C_4$ has double points at $p',q',x$. The quadratic Cremona transformation centered at $p',q',x$ maps $C_4$ birationally to a conic $C_2$. Over a $C_1$ field, $C_2(k)\neq\varnothing$, so $C_2$ (hence $C_4$) is rational. Thus $C_4$ is a rational $2$-section and Proposition~\ref{Enr} yields the claim.
\end{proof}

\begin{Remark}\label{rmk:example433222}
The proof works over any field provided $C_2$ has a rational point. For instance, in \eqref{eq:pol433222} take
\[
\begin{array}{l}
a_0=a_1=a_3=a_4=0; \\ 
a_2=b_0=b_2=b_3=c_0=c_1=c_3=d_0=d_1=d_2=g_1=h_0=h_1=1; \\ 
b_1=c_2=g_0=g_2=h_2=-1.
\end{array} 
\]
Then $T_{p'}X_4=T_{q'}X_4=\{z_2+z_3=0\}$, and $C_4$ has double points at $p'=[1\!:\!0\!:\!0\!:\!0]$, $q'=[0\!:\!1\!:\!0\!:\!0]$, $x=[0\!:\!0\!:\!1\!:\!-1]$. The quadratic Cremona transformation sends $C_4$ to
\(
C_2=\{w_0^2-2w_0w_1+3w_1^2-2w_0w_2+w_1w_2+w_2^2=0\},
\)
which is smooth and contains $[1\!:\!0\!:\!-1]$, hence rational; so $\mathcal{Q}^1$ is unirational.
\end{Remark}

The automorphisms of $\Tcal_{2,1,1}$ are of the form
\stepcounter{thm}
\begin{equation}\label{eq:aut422}
\begin{array}{rcl}
x_0 &\mapsto& \alpha_{0,0}x_0+\alpha_{0,1}x_1,\\
x_1 &\mapsto& \alpha_{1,0}x_0+\alpha_{1,1}x_1,\\
y_0 &\mapsto& \beta_{0,0}y_0,\\
y_1 &\mapsto& \gamma_{1,1}y_1+\gamma_{1,2}y_2+\delta_{1,1}x_0y_0+\delta_{1,2}x_1y_0,\\
y_2 &\mapsto& \gamma_{2,1}y_1+\gamma_{2,2}y_2+\delta_{2,1}x_0y_0+\delta_{2,2}x_1y_0.
\end{array}
\end{equation}
We set $\Gcal{2}{1}{1}\coloneqq \Aut\!\big(\Tcal_{2,1,1}\big)$.

Assume $\mathcal{Q}^1$ passes through
\(
p=([0\!:\!1],[1\!:\!0\!:\!0])
\)
and
\(
q=([1\!:\!0],[1\!:\!0\!:\!0]).
\)
Then $p'=[1\!:\!0\!:\!0\!:\!0]$, $q'=[0\!:\!1\!:\!0\!:\!0]$, and $a_0=a_4=0$. We have
\[
T_{p'}X_4=\{a_3z_0+b_3z_2+c_3z_3=0\},\qquad
T_{q'}X_4=\{a_1z_1+b_0z_2+c_0z_3=0\}.
\]
Thus the condition $T_{p'}X_4=T_{q'}X_4$ becomes
\stepcounter{thm}
\begin{equation}\label{eq:reqt433222}
a_1=0,\qquad a_3=0,\qquad b_0c_3-c_0b_3=0.
\end{equation}
Set
\(
\mathcal{U}\coloneqq \{a_0=a_4=a_1=a_3=0,\ b_0c_3-c_0b_3=0\}.
\)
In $\PVV{4}{3}{3}{2}{2}{2}$, a general point of the $16$-dimensional quadric $\mathcal{U}\subset\mathbb{P}^{21}$ corresponds to a $\mathcal{Q}^1$ with $T_p\mathcal{Q}^1=T_q\mathcal{Q}^1$, hence unirational by Proposition~\ref{prop:TTUni433222}.

\begin{Proposition}\label{prop:dom433222b}
Consider the action
\[
\Gcal{2}{1}{1}\times \PVV{4}{3}{3}{2}{2}{2} \longrightarrow \PVV{4}{3}{3}{2}{2}{2},\quad
(\varphi,\mathcal{Q}^1)\mapsto \varphi^*\mathcal{Q}^1.
\]
Then the morphism
\[
\chi:\ \Gcal{2}{1}{1}\times \mathcal{U}\longrightarrow \PVV{4}{3}{3}{2}{2}{2},\quad
(\varphi,\mathcal{Q}^1)\mapsto \varphi^*\mathcal{Q}^1
\]
is dominant.
\end{Proposition}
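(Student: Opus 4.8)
The plan is to show that the image of $\chi$ is constructible (Chevalley) and contains a Zariski-dense subset of $\PVV{4}{3}{3}{2}{2}{2}$, which forces it to be dominant. The source $\Gcal{2}{1}{1}\times\mathcal{U}$ has dimension $\dim\Gcal{2}{1}{1}+16$; since $\Gcal{2}{1}{1}$ is the group scheme described by \eqref{eq:aut422}, parametrized by the $\alpha_{i,j}$ (with $\alpha_{0,0}\alpha_{1,1}-\alpha_{0,1}\alpha_{1,0}\neq 0$), $\beta_{0,0}\neq 0$, the $\gamma$'s (with $\gamma_{1,1}\gamma_{2,2}-\gamma_{1,2}\gamma_{2,1}\neq 0$), and the $\delta$'s, one counts $\dim\Gcal{2}{1}{1}=4+1+4+4=13$. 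So the source has dimension $29$, comfortably above $\dim\PVV{4}{3}{3}{2}{2}{2}=21$, and the real content is that the differential of $\chi$ is surjective at some point, or equivalently that the orbit of $\mathcal U$ under $\Gcal{2}{1}{1}$ is not contained in a proper closed subset.

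The key computation is a tangent-space (first-order) argument at a conveniently chosen general point $\mathcal{Q}^1_0\in\mathcal{U}$. Differentiating the action map $(\varphi,\mathcal{Q}^1)\mapsto\varphi^*\mathcal{Q}^1$ at $(\mathrm{id},\mathcal{Q}^1_0)$, the image of $d\chi$ is spanned by two pieces: the tangent space $T_{\mathcal{Q}^1_0}\mathcal{U}$ (a codimension-$5$ subspace of the $21$-dimensional space, cut out by $a_0=a_4=a_1=a_3=0$ and the single quadric $b_0c_3-c_0b_3$), together with the infinitesimal action of $\mathfrak{g}\coloneqq\mathrm{Lie}(\Gcal{2}{1}{1})$, i.e.\ the Lie-derivative vectors $\mathcal L_\xi\mathcal{Q}^1_0$ for $\xi\in\mathfrak g$ coming from the vector fields in \eqref{eq:aut422}. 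First I would pick $\mathcal{Q}^1_0$ so that all residual coefficients ($a_2, b_1, b_2, b_3, c_0,\dots, c_3, d_i, g_i, h_i$, subject only to $b_0c_3-c_0b_3=0$, e.g.\ the explicit point of Remark~\ref{rmk:example433222} or a nearby generic one) are generic; then I would write out the five or six "new" directions $\mathcal L_\xi\mathcal Q^1_0$ corresponding to: the two one-parameter groups $\delta_{1,j}$ (substituting $y_1\mapsto y_1+\epsilon x_jy_0$), the two $\delta_{2,j}$, and the toric $x_0\mapsto x_0+\epsilon x_1$ direction. The effect of $y_1\mapsto y_1+\epsilon x_j y_0$ is to add $\epsilon(\sigma_{0,1}x_j\,y_0^2 + 2\sigma_{1,1}x_j\,y_0y_1+\cdots)$, which in particular contributes to the $y_0^2$-coefficient the degree-$4$ form $x_j\sigma_{0,1}$; similarly $y_2\mapsto y_2+\epsilon x_jy_0$ contributes $x_j\sigma_{0,2}$ to $\sigma_{0,0}$. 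Since $\sigma_{0,1},\sigma_{0,2}$ are generic cubics, the four forms $x_0\sigma_{0,1}, x_1\sigma_{0,1}, x_0\sigma_{0,2}, x_1\sigma_{0,2}$ together with the extra $a_2 x_0^2x_1^2$-type variation span a $5$-dimensional space of quartic forms transverse to the $1$-dimensional space $\langle x_0^2x_1^2\rangle$ already allowed in $\mathcal U$, so they hit all five missing directions $a_0,a_1,a_3,a_4$ and the direction killed by the quadric constraint. Checking that these five vectors, modulo $T_{\mathcal Q^1_0}\mathcal U$, are linearly independent — hence that $d\chi$ is surjective — is the crux.

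The main obstacle is precisely this linear-independence verification: one must confirm that the infinitesimal $\Gcal{2}{1}{1}$-action genuinely moves $\mathcal{Q}^1_0$ out of $\mathcal U$ in all five complementary directions and does not, through the degree constraints \eqref{compdeg}, conspire to stay tangent to the quadric hypersurface $\{b_0c_3-c_0b_3=0\}$. I expect this to reduce to the nonvanishing of an explicit $5\times 5$ determinant (or equivalently a rank statement about the multiplication map $k[x_0,x_1]_1\otimes(\text{cubics})\to(\text{quartics})$ restricted to the relevant coefficients), which is verified by evaluating at the explicit point of Remark~\ref{rmk:example433222}; genericity of that point (or a small perturbation of it inside $\mathcal U$) guarantees the determinant is not identically zero on $\mathcal U$. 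Once surjectivity of $d\chi$ at one point is established, $\chi$ is dominant by generic smoothness (characteristic $\neq 2$, and in fact any characteristic since we only need the image to contain a nonempty open set, which follows from the rank of $d\chi$ being maximal on a dense open subset of the source).
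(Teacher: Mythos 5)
Your proposal follows essentially the same route as the paper: fix a general $\mathcal{Q}^1\in\mathcal{U}$, differentiate the action at the identity, and verify that the differential of $\chi$ has maximal rank $21$ (the paper states this as "the Jacobian of $\chi$ at $\varphi=\mathrm{id}$ has maximal rank," deferring the same explicit rank computation that you defer to a $5\times 5$ determinant check at the point of Remark~\ref{rmk:example433222}). Your decomposition of $\operatorname{im}(d\chi)$ into $T_{\mathcal{Q}^1_0}\mathcal{U}$ plus the Lie-derivative directions, and the identification of the $\delta_{i,j}$-subgroups as the source of the missing $\sigma_{0,0}$-directions, is a correct and somewhat more explicit organization of the same verification; the only minor inaccuracy is the count $\dim\Gcal{2}{1}{1}=13$, which overcounts by the $2$-dimensional torus acting trivially in Cox coordinates (the paper works with the effective $\mathbb{A}^{11}$), but this does not affect the argument.
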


\begin{proof}
Impose \eqref{eq:reqt433222} and write $c_3=\frac{b_3}{b_0}c_0$; view $\Gcal{2}{1}{1}$ as an open in a toric variety as customary. Fix a general $\mathcal{Q}^1\in\mathcal{U}$; restrict to the open $\{\alpha_{0,0}\ne0,\ \beta_{0,0}\ne0\}\cong\mathbb{A}^{11}$; write $f_0,\dots,f_{21}$ for the coefficients of $\varphi^*\mathcal{Q}^1$. The map
\(
f=(f_0/f_{21},\dots,f_{20}/f_{21})
\)
parametrizes the orbit $\Gcal{2}{1}{1}\cdot \mathcal{Q}^1$; the Jacobian of $\chi$ at $\varphi=\mathrm{id}$ has maximal rank, hence $\chi$ is dominant.
\end{proof}

\begin{Remark}\label{rmk:dense433222}
By Proposition~\ref{prop:dom433222b}, $\mathcal{D}_{4,3,3,2,2,2}\coloneqq \chi(\Gcal{2}{1}{1}\times\mathcal{U})$ is dense, i.e.
\(
\overline{\chi(\Gcal{2}{1}{1}\times\mathcal{U})}=\PVV{4}{3}{3}{2}{2}{2}.
\) Thus the subset of $\PVV{4}{3}{3}{2}{2}{2}$ parameterizing unirational conic bundles is unirational.
\end{Remark}

\begin{Proposition}\label{prop:nosec433222}
Let $\mathcal{Q}^1$ be general in $\mathcal{D}_{4,3,3,2,2,2}$, and assume that $k$ is perfect. Then $\mathcal{Q}^1$ has no section and is not rational.
\end{Proposition}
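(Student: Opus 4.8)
The goal is to show that a general conic bundle $\mathcal{Q}^1$ in the dense locus $\mathcal{D}_{4,3,3,2,2,2}$ has no rational section; non-rationality then follows immediately, since an extremal conic bundle over $\mathbb{P}^1$ without a section is not rational (this is where minimality enters, and is already part of the paper's line of argument). The strategy is to exhibit a single explicit conic bundle in $\mathcal{U}$ (hence in $\mathcal{D}_{4,3,3,2,2,2}$ after applying the $\mathcal{G}_{2,1,1}$-action, which does not change the isomorphism class, by Proposition~\ref{prop:dom433222b}) whose associated $2$-torsion Brauer class is nontrivial, and then use semicontinuity to propagate this to the general member of $\mathcal{D}_{4,3,3,2,2,2}$. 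Concretely: diagonalize the generic fiber of $\mathcal{Q}^1$ over $k(t)$ using Proposition~\ref{diag}, obtaining a conic of the form $\{a(t)u^2 + b(t)v^2 - w^2 = 0\}$ with $a(t),b(t)\in k(t)^\times$ expressed through the discriminant $\delta_{\mathcal{Q}}$ and the leading minors $\alpha_0 = \sigma_{0,0}$, $\alpha_1^2 - 4\alpha_0\alpha_3 = \sigma_{0,1}^2 - 4\sigma_{0,0}\sigma_{1,1}$; then compute a residue $\partial^2_p(a(t),b(t))$ at a well-chosen place $p\in\mathbb{P}^1$ and check it is nonzero.

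\textbf{Key steps, in order.} First I would fix a convenient representative in $\mathcal{U}$: impose \eqref{eq:reqt433222} (so $a_0=a_1=a_3=a_4=0$ and $c_3 = (b_3/b_0)c_0$), and further specialize the remaining free coefficients to small integer (or $\mathbb{F}_p$-style) values making the residue computation transparent — the example of Remark~\ref{rmk:example433222} is a natural starting candidate, though one may need a different specialization so that the resulting $\mathcal{Q}^1$ is minimal rather than accidentally having a triple point or extra section. Second, apply Proposition~\ref{diag} with $\alpha_i$ the coefficients of the generic fiber as a quadratic form in $y_0,y_1,y_2$ over $k(t)$ (setting $t = x_1/x_0$), to get the diagonal model; here $\alpha_0 = \sigma_{0,0}(1,t)$, which since $a_0=a_1=0$ has a zero of order $\geq 2$ at $t=0$, a feature one can exploit. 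Third, identify a prime $p\in\mathbb{P}^1$ (for instance $t=0$, or a point where $\sigma_{0,0}$ or the discriminant $\delta_{\mathcal{Q}}$ vanishes to odd order while the other slot has even valuation) at which $\partial^2_p(a(t),b(t))$ is a nonsquare in the residue field $k(p)$; by the Faddeev sequence recalled in Section~\ref{Bra}, this certifies $(a(t),b(t))\neq 0$ in $\mathrm{Br}(k(t))[2]$, hence no rational section. Fourth, invoke semicontinuity: the set of points of $\mathbb{P}(V^1_{4,3,3,2,2,2})$ for which the Brauer class is nontrivial — equivalently, for which the conic bundle has no rational section — contains a nonempty Zariski open subset of each irreducible component on which such a point exists, because the existence of a section is a closed condition on the base of a flat family (one can phrase this via relative Hilbert schemes of sections, or more elementarily: "has a section of bounded degree $\le N$" is closed, and over any fixed field a section has degree bounded in terms of the discriminant). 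Since $\mathcal{U}$ is irreducible and $\mathcal{D}_{4,3,3,2,2,2} = \overline{\chi(\mathcal{G}_{2,1,1}\times\mathcal{U})}$ with $\chi$ dominant, the general point of $\mathcal{D}_{4,3,3,2,2,2}$ then inherits the no-section property. Fifth, conclude non-rationality: $\mathcal{Q}^1$ over $\mathbb{P}^1$ with $d_{\mathcal{Q}^1}=8$ has $(-K)^2 = 0$ (Remark~\ref{topsiR}), it is extremal by generality (relative Picard rank $1$), and an extremal conic bundle with no section over $\mathbb{P}^1$ cannot be rational — this is standard (a rational surface conic bundle over $\mathbb{P}^1$ has a section).

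\textbf{Main obstacle.} The delicate point is the residue computation together with checking that the chosen representative is genuinely minimal: one must verify that the specialized $\mathcal{Q}^1$ does not split off a section or acquire a triple point on $C_4$ (which would make it rational, as noted in the proof of Proposition~\ref{prop:TTUni433222}), and simultaneously that some residue $\partial^2_p$ is a nonsquare in the appropriate residue field. Over a general $C_1$ field the residue field $k(p)$ may be a nontrivial extension of $k$, so "nonsquare" has to be argued, not just computed over $\mathbb{Q}$; the cleanest route is to choose the specialization so that $k(p)=k$ (a $k$-rational point of $\mathbb{P}^1$) and so that the relevant element of $k^\times/k^{\times 2}$ is visibly nontrivial — e.g.\ it specializes to $-1$ or to a variable that, by a further genericity argument on the coefficients of $\mathcal{Q}^1$, remains a nonsquare. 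A secondary subtlety is the semicontinuity/flatness step: one should state carefully which parameter space is being stratified and that "no rational section" is stable under generization, which is why I would phrase it through the closedness of "admits a section of degree $\le N$" together with an a priori degree bound for sections of a conic bundle with fixed discriminant degree.
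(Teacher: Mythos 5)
Your proposal follows essentially the same route as the paper: the paper likewise specializes to a convenient sublocus (there $b_2=b_3=d_0=d_2=0$, with the remaining coefficients kept symbolic rather than numerical), diagonalizes via Proposition~\ref{diag}, computes the residue $\partial_t^2(a,b)$ at $t=0$ and observes that its class $-c^2h_2^5/(4b_0^3c_2^3g_2^3d_1)$ is a nonsquare for general parameters, and then transfers the absence of a section to the general member of $\mathcal{D}_{4,3,3,2,2,2}$ by the specialization Lemma~\ref{sect}, which plays the role of your semicontinuity step. Your instinct to keep the coefficients generic rather than fixing a fully numerical example is exactly right, since a specific element of $k^{\times}$ may well be a square in an arbitrary perfect field.
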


\begin{proof}
It suffices to treat conic bundles general among those with $b_2=b_3=d_0=d_2=0$, the general case then following by Lemma~\ref{sect}. Set $b_2=b_3=d_0=d_2=0$. By Proposition~\ref{diag} the quadratic form
\[
\begin{aligned}
\mathcal{Q} &=
\frac{a_2t^2}{h_0t^2+h_1t+h_2}y_0^2
+ \frac{b_0t^3+b_1t^2+b_2t+b_3}{h_0t^2+h_1t+h_2}y_0y_1
+ \frac{b_0c_0t^3+b_0c_1t^2+b_0c_2t+b_3c_0}{b_0(h_0t^2+h_1t+h_2)}y_0y_2\\
&\qquad + \frac{d_0t^2+d_1t+d_2}{h_0t^2+h_1t+h_2}y_1^2
+ \frac{g_0t^2+g_1t+g_2}{h_0t^2+h_1t+h_2}y_1y_2 + y_2^2
\end{aligned}
\]
is congruent to
\(
\mathcal{Q}'=\frac{a_2t^2}{h_0t^2+h_1t+h_2}z_0^2+\beta_1 z_1^2+\beta_2 z_2^2,
\)
where
\[
\beta_1 = \frac{c \,t^{5} \left(-b_{0}^{2} t^{3}-2 b_{0} b_{1} t^{2}-b_{1}^{2} t +4 c d_{1}\right)}{\left(h_{0} t^{2}+h_{1} t +h_{2}\right)^{3}},
\]
and
\begingroup\small
\[
\begin{aligned}
\beta_{2} &= \frac{1}{(h_{0} t^{2}+h_{1} t +h_{2})^5}\Big(4t^{4} (b_{0}^{2} h_{0} t^{7}+(b_{0}^{2} h_{1}+2 b_{0} b_{1} h_{0}+c_{0}^{2} d_{1}-c_{0} g_{0} h_{0}) t^{6}\\
&\quad+(h_{2} b_{0}^{2}+(2 c_{0} c_{1} d_{1}-c_{1} g_{0} h_{0}+2 b_{1} h_{1}) b_{0} + (b_{1}^{2}-c_{0} g_{1}) h_{0}+c g_{0}^{2}-h_{1} c_{0} g_{0}) t^{5}\\
&\quad+(b_{0}^{2} c_{1}^{2} d_{1}+((-c_{1} g_{1}-c_{2} g_{0}) h_{0}-h_{1} c_{1} g_{0}+2 c_{0} c_{2} d_{1}+2 h_{2} b_{1}) b_{0}+(-4 c d_{1}-c_{0} g_{2}) h_{0}\\
&\quad + (b_{1}^{2}-c_{0} g_{1}) h_{1}+2 c g_{1} g_{0} - h_{2} c_{0} g_{0}) t^{4}+(2 b_{0}^{2} c_{1} c_{2} d_{1}+((-c_{1} g_{2}-c_{2} g_{1}) h_{0}+(-c_{1} g_{1}-c_{2} g_{0}) h_{1}-h_{2} c_{1} g_{0}) b_{0}\\
&\quad + (-4 c d_{1}-c_{0} g_{2}) h_{1}+(b_{1}^{2}-c_{0} g_{1}) h_{2}+c (2 g_{0} g_{2}+g_{1}^{2})) t^{3}+(b_{0}^{2} c_{2}^{2} d_{1}+(-g_{2} c_{2} h_{0}+(-c_{1} g_{2}-c_{2} g_{1}) h_{1}\\
&\quad -h_{2} (c_{1} g_{1}+c_{2} g_{0})) b_{0} + (-4 c d_{1}-c_{0} g_{2}) h_{2}+ 2 c g_{1} g_{2}) t^{2}+((-g_{2} c_{2} h_{1}-h_{2} (c_{1} g_{2}+c_{2} g_{1})) b_{0}+c g_{2}^{2}) t\\
&\quad -b_{0} c_{2} g_{2} h_{2}\Big) \cdot \Big(-\frac{1}{4} b_{0}^{2} t^{3}-\frac{1}{2} b_{0} b_{1} t^{2}-\frac{1}{4} b_{1}^{2} t +c d_{1}\Big).
\end{aligned}
\]
\endgroup
Dividing by $\beta_2$ gives
\(
\mathcal{Q}'=a z_0^2+b z_1^2-z_2^2
\)
with $a=-\frac{a_2t^2}{\beta_2(h_0t^2+h_1t+h_2)}$, $b=-\frac{\beta_1}{\beta_2}$. Here $\beta_2$ has a zero of order $4$ at $t=0$, $\beta_1$ a zero of order $5$, so $a$ has a pole of order $2$ and $b$ a simple zero at $t=0$. Therefore
\(
\partial_t^2(a,b)=\overline{ab^2}.
\)
Modulo $t$, the class of $ab^2$ equals
\(
-\frac{c^2h_2^5}{4b_0^3c_2^3g_2^3d_1},
\)
which is not a square in $k$; hence $\partial_t^2(a,b)\neq 0$ and $\mathcal{Q}^1$ has no section.
\end{proof}

\begin{Proposition}\label{prop:min433222}
Assume that $k$ is not algebraically closed and perfect. A general conic bundle $\mathcal{Q}^1\in \mathcal{D}_{4,3,3,2,2,2}$ is minimal.
\end{Proposition}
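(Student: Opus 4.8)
The plan is to show that a general $\mathcal{Q}^1\in\mathcal{D}_{4,3,3,2,2,2}$ is extremal (has relative Picard rank $1$) and has no rational section; combined, these are exactly the defining conditions of minimality in Definition~\ref{ext_min}. The absence of a rational section is already supplied by Proposition~\ref{prop:nosec433222}, since that proposition shows the Brauer residue $\partial^2_t(a,b)$ is nontrivial for a general member of $\mathcal{D}_{4,3,3,2,2,2}$. So the only genuinely new work is extremality: one must rule out the existence of any extra divisor class beyond the fiber class $F$ and a section-type class. Equivalently, after passing to $\overline k$, one must check that no singular fiber of $\pi$ breaks into two $\overline k$-lines in a way that is stable under $\Gal(\overline k/k)$, i.e.\ that the Galois action on the set of $16$ ``lines'' in the eight reducible fibers has no fixed point and, more precisely, that no proper Galois-invariant subset of these components gives a contractible configuration. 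Since $d_{\mathcal{Q}^1}=8$ we have $K_{\mathcal{Q}^1}^2=0$, and an extremal conic bundle with $K^2=0$ and no section is automatically minimal, so there is nothing further to do once extremality is established.

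\textbf{Key steps.} First I would record the Galois-theoretic criterion: $\pi\colon\mathcal{Q}^1\to\PP^1$ fails to be extremal precisely when the eight singular fibers, which over $\overline k$ split into pairs of lines, admit a choice of one line from each of a Galois-stable collection that descends to a $(-1)$-curve (or, more simply, when $\mathrm{NS}(\mathcal{Q}^1_{\overline k})^{\Gal}$ has rank exceeding $2$). Concretely, over $\overline k$ the surface is a conic bundle with eight degenerate fibers, its geometric Picard rank is $2+8=10$, and extremality over $k$ is the statement that Galois acts on the $8$ ``which-line-is-which'' $\ZZ/2$-choices without any invariant splitting — equivalently the degree-$8$ discriminant polynomial, together with the field extensions encoding the two components of each fiber, is ``as generic as possible.'' Second, I would reduce to a transverse slice exactly as in Proposition~\ref{prop:nosec433222}: it suffices to prove minimality for $\mathcal{Q}^1$ general among those with $b_2=b_3=d_0=d_2=0$, since specialization of a minimal bundle to a smooth conic bundle keeps non-existence of a section by Lemma~\ref{sect}, and the analogous semicontinuity (an extremal bundle cannot acquire extra invariant classes under a generic deformation within the family, only under special ones) lets one deduce the general case from the slice. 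Third, on this slice I would use the diagonal form $\mathcal{Q}'=az_0^2+bz_1^2-z_2^2$ already computed there: the reducible fibers sit over the roots of the product of the three ``coordinate discriminants,'' and the splitting field of each component-pair is read off from whether $-a/b$, $-a$, or $b$ (evaluated near that root) is a square in the relevant residue field. I would then exhibit, for the specific numerical choice analogous to Remark~\ref{rmk:example433222}, that these splitting data generate a sufficiently large (non-abelian, or at least fixed-point-free) Galois action, so that no invariant class beyond $\Span{F}$ and the anticanonical-type class exists; genericity then propagates this to a dense open of the slice, hence to a dense subset of $\mathcal{D}_{4,3,3,2,2,2}$.

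\textbf{Main obstacle.} The hard part will be the extremality bookkeeping: one must genuinely control the Galois action on the $16$ components of the eight singular fibers and verify there is no Galois-invariant ``half-fiber selection'' that contracts to lower the Picard rank. Over a general $C_1$ field one cannot simply count points, so the argument has to be made at the level of the residue maps $\partial^2_p$ and the quadratic residue fields $k(p)(\sqrt{\cdot})$; showing these eight quadratic extensions are ``independent enough'' for a general member of the family is the crux, and it is most cleanly done by exhibiting one explicit bundle where it holds (as in Remark~\ref{rmk:example433222}) and invoking openness of the locus where a given finite configuration of residues is nondegenerate. A secondary subtlety is ensuring that the transverse-slice reduction is valid in \emph{both} directions for extremality (not just for ``no section''), which requires noting that having extra invariant divisor classes is a closed condition on the parameter space, so its failure on the slice forces its failure on a dense open of the whole family.
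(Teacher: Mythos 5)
Your overall decomposition --- minimality $=$ extremality $+$ no rational section, with the latter supplied by Proposition~\ref{prop:nosec433222} --- is exactly the paper's. The difference lies in how extremality is established. The paper writes out the discriminant $\delta_{\mathcal{Q}^1}$ of a general member of $\mathcal{D}_{4,3,3,2,2,2}$ explicitly and observes that it is irreducible over $k$ for general parameters, and that the two components of a reducible geometric fiber are not defined over $k$; together these force every fiber to be irreducible over $k$, whence $\rho(\mathcal{Q}^1/\PP^1)=1$. Your proposed route --- a full bookkeeping of the Galois action on the $16$ lines, carried out on the transverse slice $b_2=b_3=d_0=d_2=0$ via residue fields and an explicit example, then propagated --- would likely work, but it is heavier than necessary and rests on one claim you should not lean on: over a non-algebraically-closed field, ``some fiber is reducible over $k$'' is \emph{not} a Zariski-closed condition on the $k$-points of the parameter space (reducibility over $k$ of the degree-$8$ discriminant is neither open nor closed; both loci can be simultaneously dense, e.g.\ over $\QQ$). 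So the two-way slice reduction you invoke for extremality does not follow from ``closedness''; the paper sidesteps this by computing the discriminant for the general member of $\mathcal{D}_{4,3,3,2,2,2}$ directly rather than on a slice. Note also that extremality only requires each fiber to be irreducible over $k$; the stronger ``no Galois-invariant contractible configuration'' you aim to rule out is more than is needed.
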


\begin{proof}
It suffices to show that all fibers of $\mathcal Q^1$ are irreducible. For a general $\mathcal{Q}^1\in\mathcal{D}_{4,3,3,2,2,2}$ the discriminant has the form
\[
\begin{aligned}
\delta_{\mathcal{Q}^1} &=  c \,t^{2} (d_{0} t^{2}+d_{1} t +d_{2}) (h_{0} t^{2}+h_{1} t +h_{2})
-\frac{c \,t^{2} (g_{0} t^{2}+g_{1} t +g_{2})^{2}}{4}\\
&\quad -\frac{(b_{0} t^{3}+b_{1} t^{2}+b_{2} t +b_{3})^{2} (h_{0} t^{2}+h_{1} t +h_{2})}{4}
+\frac{(c_{1} b_{0} t^{2}+c_{0} t^{3}+c_{2} b_{0} t +c_{0} b_{3}) (g_{0} t^{2}+g_{1} t +g_{2})}{4}\\
&\quad -\frac{(c_{1} b_{0} t^{2}+c_{0} t^{3}+c_{2} b_{0} t +c_{0} b_{3})^{2} (d_{0} t^{2}+d_{1} t +d_{2})}{4},
\end{aligned}
\]
which is irreducible over $k$ for general parameters. Even if a geometric fiber were reducible, for general parameters its components would not be defined over $k$ (since $k$ is not algebraically closed), hence the fiber is irreducible over $k$. Thus $\rho(\mathcal{Q}^1/W)=1$ (extremal), and by Proposition~\ref{prop:nosec433222} there are no sections, hence minimal.
\end{proof}

\begin{thm}\label{thm:main433222}
Assume $k$ is $C_1$ and perfect. There exists a dense subset
\(
\mathcal{D}_{4,3,3,2,2,2}\subset \PVV{4}{3}{3}{2}{2}{2}
\)
whose general point corresponds to a minimal unirational conic bundle.
\end{thm}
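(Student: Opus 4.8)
The plan is to assemble the statements already proved in this section. I would \emph{define} the dense set to be $\mathcal{D}_{4,3,3,2,2,2}\coloneqq\chi\big(\Gcal{2}{1}{1}\times\mathcal{U}\big)$, exactly as in Remark~\ref{rmk:dense433222}. First I would record that this set is well behaved: the group $\Gcal{2}{1}{1}=\Aut(\Tcal_{2,1,1})$ is irreducible (by \eqref{eq:aut422} it is an iterated extension of $GL_2\times\mathbb{G}_m\times GL_2$ by a vector group), and $\mathcal{U}\subset\PVV{4}{3}{3}{2}{2}{2}$ is the quadric cut by the rank-$4$ form $b_0c_3-c_0b_3$ inside a linear subspace, hence irreducible; so $\Gcal{2}{1}{1}\times\mathcal{U}$ is irreducible and therefore $\mathcal{D}_{4,3,3,2,2,2}$, being the image of a morphism, is an irreducible constructible subset of $\PVV{4}{3}{3}{2}{2}{2}$. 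By Proposition~\ref{prop:dom433222b} it is dense.

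Next I would establish unirationality at the general point of $\mathcal{D}_{4,3,3,2,2,2}$. By Proposition~\ref{prop:TTUni433222} there is a dense open $\mathcal{U}^{\circ}\subseteq\mathcal{U}$ all of whose points represent unirational conic bundles. For $\varphi\in\Gcal{2}{1}{1}$ the pullback $\varphi^{*}\mathcal{Q}^1$ is isomorphic to $\mathcal{Q}^1$ over $k$, so unirationality is constant along $\Gcal{2}{1}{1}$-orbits; hence every point of $\chi\big(\Gcal{2}{1}{1}\times\mathcal{U}^{\circ}\big)$ represents a unirational conic bundle. Since $\chi$ is dominant with irreducible source and $\Gcal{2}{1}{1}\times(\mathcal{U}\setminus\mathcal{U}^{\circ})$ is a proper closed subset of that source, its generic fibre over $\PVV{4}{3}{3}{2}{2}{2}$ is strictly smaller than the generic fibre of $\chi$; a fibre-dimension count then shows that $\chi\big(\Gcal{2}{1}{1}\times\mathcal{U}^{\circ}\big)$ is again dense (and constructible), so it contains a dense open $\mathcal{D}^{\circ}\subseteq\mathcal{D}_{4,3,3,2,2,2}$.

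Then I would invoke minimality: by Proposition~\ref{prop:min433222} a general member of $\mathcal{D}_{4,3,3,2,2,2}$ is minimal — here $k$ is perfect, and one may assume $k$ is not algebraically closed, since otherwise every smooth conic over $\overline{k}(t)$ has a rational point, no minimal conic bundle exists, and the statement is vacuous — and by Proposition~\ref{prop:nosec433222} such a bundle is not $k$-rational; thus minimality holds on a dense open $\mathcal{D}^{\circ\circ}\subseteq\mathcal{D}_{4,3,3,2,2,2}$. Finally, since $\mathcal{D}_{4,3,3,2,2,2}$ is irreducible, $\mathcal{D}^{\circ}\cap\mathcal{D}^{\circ\circ}$ is a nonempty, hence dense, open subset of $\mathcal{D}_{4,3,3,2,2,2}$, every point of which represents a minimal unirational conic bundle, which is the assertion of the theorem.

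I expect no genuine difficulty: all the substance is carried by Propositions~\ref{prop:TTUni433222}, \ref{prop:dom433222b}, \ref{prop:nosec433222} and \ref{prop:min433222}. The one place that needs a line of care — and where I would be most careful — is the bookkeeping in the second paragraph, namely that the dense open $\mathcal{U}^{\circ}$ of unirational bundles stays dense after being spread out by the $\Gcal{2}{1}{1}$-action; this follows from irreducibility of the source of $\chi$ together with the elementary bound on $\dim\chi\big(\Gcal{2}{1}{1}\times(\mathcal{U}\setminus\mathcal{U}^{\circ})\big)$.
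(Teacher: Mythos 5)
Your proposal is correct and takes essentially the same route as the paper, whose proof is literally the one line ``combine Propositions~\ref{prop:min433222} and \ref{prop:nosec433222} with the unirationality established in Remark~\ref{rmk:dense433222}''; you have simply made explicit the bookkeeping (irreducibility of $\Gcal{2}{1}{1}\times\mathcal{U}$, invariance of unirationality along orbits, and the density of the spread-out unirational locus) that the paper leaves implicit, together with the sensible observation that the algebraically closed case may be discarded before invoking Proposition~\ref{prop:min433222}.
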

\begin{proof}
Combine Propositions~\ref{prop:min433222}, \ref{prop:nosec433222} with the unirationality established in Remark~\ref{rmk:dense433222}.
\end{proof}

\section{Conic bundles of type \texorpdfstring{$\TypeTwo{2}{2}{0}$}{(2,2,0)}}\label{sec:442420}
Consider now conic bundles on the form
\[
\mathcal{Q}^1
=\{\sigma_{0,0}y_0^2+\sigma_{0,1}y_0y_1+\sigma_{0,2}y_0y_2+\sigma_{1,1}y_1^2+\sigma_{1,2}y_1y_2+\sigma_{2,2}y_2^2=0\}\subset \Tcal_{2,2,0},
\]
with $\sigma_{0,0},\sigma_{0,1},\sigma_{1,1}\in k[x_0,x_1]_4$, $\sigma_{0,2},\sigma_{1,2}\in k[x_0,x_1]_2$, and $\sigma_{2,2}\in k[x_0,x_1]_0$ (a constant). Then $\phi(\mathcal{Q}^1)$ for
\[
\phi\colon \Tcal_{2,2,0}\to \mathbb{P}^3,\quad (x_0,x_1,y_0,y_1,y_2)\mapsto [\,x_0:y_1:y_2:1\,],
\]
is a sextic surface $X_6\subset\mathbb{P}^3$ with multiplicity $4$ along $\Lambda_4\coloneqq \{z_0=z_3=0\}$ and multiplicity $2$ along $\Lambda_2\coloneqq \{z_1=z_3=0\}$. At $q=[0,0,1,0]=\Lambda_4\cap\Lambda_2$, the tangent cone is $TC_q(X_6)=\{z_3^4=0\}$. Write
\stepcounter{thm}
\begin{equation}\label{eq:pol442420}
\begin{aligned}
\sigma_{0,0} &= a_0x_0^4 + a_1x_0^3x_1 + a_2x_0^2x_1^2 + a_3x_0x_1^3 + a_4x_1^4,\\
\sigma_{0,1} &= b_0x_0^4 + b_1x_0^3x_1 + b_2x_0^2x_1^2 + b_3x_0x_1^3 + b_4x_1^4,\\
\sigma_{0,2} &= c_0x_0^2 + c_1x_0x_1 + c_2x_1^2,\\
\sigma_{1,1} &= d_0x_0^4 + d_1x_0^3x_1 + d_2x_0^2x_1^2 + d_3x_0x_1^3 + d_4x_1^4,\\
\sigma_{1,2} &= g_0x_0^2 + g_1x_0x_1 + g_2x_1^2,\\
\sigma_{2,2} &= h_0.
\end{aligned}
\end{equation}

\begin{Proposition}\label{prop:TTUni442420}
Assume $k$ is $C_1$. If there exists a point $p\in \mathcal{Q}^1$ mapping to $p'=\phi(p)\in X_6$ such that $p'\notin\Lambda_2$, the tangent plane $T_{p'}X_6$ contains $\Lambda_2$, and $X_6$ is otherwise general, then $\mathcal{Q}^1$ is unirational.
\end{Proposition}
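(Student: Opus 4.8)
The plan is to mimic the strategy of Proposition~\ref{prop:TTUni433222}: find a low-degree rational multisection of $\pi\colon\mathcal{Q}^1\to\PP^1$ by slicing $X_6$ with the tangent plane $T_{p'}X_6$, then invoke Enriques' criterion (Proposition~\ref{Enr}). First I would set $p'=\phi(p)$ and form the plane curve $C\coloneqq X_6\cap T_{p'}X_6$. The crucial point is that $T_{p'}X_6$ is required to contain the double line $\Lambda_2$; since $X_6$ has multiplicity $2$ along $\Lambda_2$, the intersection $X_6\cap H$ for any plane $H\supset\Lambda_2$ contains $2\Lambda_2$ as a component, so $X_6\cap H = 2\Lambda_2 + C_H$ where $C_H$ is a residual quartic plane curve. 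Because $X_6$ also has multiplicity $4$ along $\Lambda_4$, each such $H$ meets $\Lambda_4$ in a point lying on $C_H$ with high multiplicity; the point $q=\Lambda_4\cap\Lambda_2$ where $TC_q(X_6)=\{z_3^4=0\}$ needs separate bookkeeping to see exactly what $C_H$ looks like near $q$. I would track the fiberwise picture under $\phi$: a general plane $H\supset\Lambda_2$ corresponds (under the blow-down $\Tcal_{2,2,0}\to\PP^3$) to a pencil whose members are the fibers of $\pi$, so $C_H$, after stripping the fixed part coming from $\Lambda_4$, maps to a $2$-section or a section of $\mathcal{Q}^1$. Taking $H=T_{p'}X_6$ specifically, the residual quartic $C=C_{T_{p'}X_6}$ is singular at $p'$ (the tangency point) and at the point $T_{p'}X_6\cap\Lambda_4$ (forced by the quadruple line), giving $C$ enough singularities to be a rational curve.

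Next I would carry out the rationality of $C$. After removing the fixed $\Lambda_4$-contribution, one expects the genuine $2$-section component $C'$ of $C$ to be a plane curve of small degree — a quartic with two assigned singular points, or a conic — and I would apply a quadratic Cremona transformation centered at its singular points (as in Remark~\ref{rmk:example433222}) to reduce $C'$ birationally to a conic $C_2$ in $\PP^2$. Over a $C_1$ field, $C_2(k)\neq\varnothing$ by Lang's theorem (Remark~\ref{lang}), so $C_2$, hence $C'$, is $k$-rational. The degenerate sub-case — when $C'$ acquires an extra singularity and splits off a section, forcing $\mathcal{Q}^1$ to be rational outright — should be handled exactly as in the proof of Proposition~\ref{prop:TTUni433222}. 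Finally, $C'\to\PP^1$ being a rational $2$-section (or section) that dominates the base, Proposition~\ref{Enr} yields unirationality of $\mathcal{Q}^1$.

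The main obstacle I anticipate is pinning down precisely the residual curve $C'$: the combination of a quadruple line $\Lambda_4$ and a double line $\Lambda_2$ meeting at the single bad point $q$, together with the non-reduced tangent cone $\{z_3^4=0\}$ there, makes the intersection-theoretic bookkeeping of $X_6\cap T_{p'}X_6$ delicate. One must verify that, generically in the parameter space \eqref{eq:pol442420} and for a general choice of $p$ satisfying the tangency hypothesis, the curve $C'$ has exactly the expected singularities (no more, no fewer) so that the Cremona reduction lands on a smooth conic rather than a reducible or higher-genus curve. I would make this explicit by choosing affine coordinates adapted to $\Lambda_2$ — e.g.\ working on the chart of $\Tcal_{2,2,0}$ where $x_0,y_0,y_1$ are coordinates, dehomogenizing \eqref{Cox_g}, and computing the tangency condition $T_{p'}X_6\supset\Lambda_2$ as an explicit (affine-linear) constraint on the $a_i,b_i,c_i,d_i,g_j,h_0$ — and then checking the generic singularity type of $C'$ by a direct local computation, as well as exhibiting one explicit numerical member (in the spirit of Remark~\ref{rmk:example433222}) where everything is transparent.
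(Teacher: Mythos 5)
Your overall strategy is the paper's: slice with the tangent plane $T_{p'}X_6\supset\Lambda_2$, identify a low-degree rational residual curve, reduce it to a conic by a Cremona transformation, use the $C_1$ hypothesis to produce a point, and conclude by Proposition~\ref{Enr}. But the two points you leave open are exactly where the content lies, and as written your plan does not close. First, you have the fibration structure reversed: the conic fibration $\pi$ on $X_6$ is induced by the pencil of planes through the \emph{quadruple} line $\Lambda_4$ (each such plane cuts out $4\Lambda_4$ plus a residual conic), not by the planes through $\Lambda_2$; the pencil through $\Lambda_2$ gives the genus-one fibration of Proposition~\ref{prop:elld442420}. Consequently a plane $H\supset\Lambda_2$ meets $\Lambda_4$ only at $q=\Lambda_2\cap\Lambda_4$, there is no ``fixed part coming from $\Lambda_4$'' to strip, and the residual curve is honestly a quartic: $X_6\cap T_{p'}X_6=2\Lambda_2+C_4$ with $C_4$ a $2$-section of $\pi$ (not ``a quartic or a conic'').

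Second, and more seriously, $C_4$ has only \emph{two} singular points: an ordinary double point at $p'$ and a singularity of $\delta$-invariant $2$ at $q$ with tangent cone $\{z_3^2=0\}$ (inherited from $TC_q(X_6)=\{z_3^4=0\}$); this is what makes the genus drop from $3$ to $0$. A quadratic Cremona transformation ``centered at its singular points'' therefore has only two assignable centers, and with a general third base point it sends $C_4$ to a curve of degree $8-2-2-0=4$, not to a conic. The device the paper uses, and which your plan is missing, is the \emph{degenerate} quadratic transformation given by conics through $p'$ and $q$ with fixed tangent line $L=\{z_3=0\}\cap T_{p'}X_6$ at $q$, i.e.\ with third base point infinitely near $q$ along $L$; because the $\delta=2$ singularity of $C_4$ at $q$ also has multiplicity $2$ at that infinitely near point, the image has degree $8-2-2-2=2$ and is a smooth conic, to which Lang's theorem applies. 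You correctly flag this bookkeeping as the main obstacle, but until the singularity type at $q$ and the distinguished tangent direction $L$ are pinned down, the reduction to a conic --- and hence the $k$-rationality of the $2$-section --- does not follow.
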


\begin{proof}
We have $C_6\coloneqq X_6\cap T_{p'}X_6=2\Lambda_2 + C_4$, with $C_4$ a quartic having an ordinary double point at $p'$ and a cusp at $q$ with $TC_q(C_4)=\{z_3^2=0\}$. Its arithmetic genus is $0$, hence $C_4$ is rational over $\bar{k}$. To obtain a $k$-rational model, take the Cremona transformation $\psi$ on $T_{p'}X_6$ given by conics through $p'$ and $q$ with fixed tangent line $L=(z_3=0)\cap T_{p'}X_6$. Then $\overline{\psi(C_4)}=C_2\subset\mathbb{P}^2_k$ is a smooth conic; since $k$ is $C_1$, $C_2(k)\neq\varnothing$, hence $C_2$ (and $C_4$) are $k$-rational. Proposition~\ref{Enr} gives the claim.
\end{proof}

The automorphisms of $\Tcal_{2,2,0}$ are of the form
\stepcounter{thm}
\begin{equation}\label{eq:aut220}
\begin{array}{rcl}
x_0&\mapsto&\alpha_{0,0}x_0+\alpha_{0,1}x_1,\quad
x_1\mapsto \alpha_{1,0}x_0+\alpha_{1,1}x_1,\\
y_0&\mapsto&\beta_{0,0}y_0+\beta_{0,1}y_1,\quad
y_1\mapsto \beta_{1,0}y_0+\beta_{1,1}y_1,\\
y_2&\mapsto&\gamma_{2,2}y_2+\sum_{i=1}^3\big(\delta_{1,i}\,x_0^{3-i}x_1^{i-1}y_0+\theta_{1,i}\,x_0^{3-i}x_1^{i-1}y_1\big).
\end{array}
\end{equation}
We set $\Gcal{2}{2}{0}\coloneqq \Aut\!\big(\Tcal_{2,2,0}\big)$.

If $\mathcal{Q}^1$ passes through $p=([1\!:\!0],[1\!:\!0\!:\!0])$, then $p'=[0\!:\!0\!:\!0\!:\!1]$ and $a_4=0$. The condition $\Lambda_2\subset T_{p'}X_6$ reads
\stepcounter{thm}
\begin{equation}\label{eq:reqt442420}
a_3=0,\qquad c_2=0.
\end{equation}
Set $\mathcal{U}\coloneqq \{a_3=a_4=c_2=0\}$. In $\PVV{4}{4}{2}{4}{2}{0}$, a general point of the $18$-dimensional linear subspace $\mathcal{U}\subset\mathbb{P}^{21}$ yields $\Lambda_2\subset T_{p'}\phi(\mathcal{Q}^1)$, hence unirational by Proposition~\ref{prop:TTUni442420}.

\begin{Proposition}\label{prop:dom442420}
With notation as above, the map
\[
\chi:\ \Gcal{2}{2}{0}\times \mathcal{U}\longrightarrow \PVV{4}{4}{2}{4}{2}{0},\quad
(\varphi,\mathcal{Q}^1)\mapsto \varphi^*\mathcal{Q}^1,
\]
is dominant.
\end{Proposition}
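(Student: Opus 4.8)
The argument will be formally identical to that of Proposition~\ref{prop:dom433222b}. Fix a general $\mathcal{Q}^1\in\mathcal{U}$. Since $\chi(\mathrm{id},-)$ restricts to the inclusion $\mathcal{U}\hookrightarrow\PVV{4}{4}{2}{4}{2}{0}$, and the dimension count $\dim\Gcal{2}{2}{0}+\dim\mathcal{U}\ge 21$ already permits dominance, it suffices to prove that the differential $d\chi_{(\mathrm{id},\mathcal{Q}^1)}$ is surjective onto $T_{[\mathcal{Q}^1]}\PVV{4}{4}{2}{4}{2}{0}$. On the $\mathcal{U}$–factor of the tangent space of the product this differential is the inclusion of $T_{[\mathcal{Q}^1]}\mathcal{U}$, so its image always contains $T_{[\mathcal{Q}^1]}\mathcal{U}$; and since $\mathcal{U}=\{a_3=a_4=c_2=0\}$ is a linear subspace of codimension $3$, the quotient $T_{[\mathcal{Q}^1]}\PVV{4}{4}{2}{4}{2}{0}/T_{[\mathcal{Q}^1]}\mathcal{U}$ is three–dimensional, with basis the classes of the coordinate directions $\partial_{a_3},\partial_{a_4},\partial_{c_2}$. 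Thus the task reduces to exhibiting three elements of the Lie algebra $\mathfrak{g}$ of $\Gcal{2}{2}{0}$ whose infinitesimal action on $\mathcal{Q}^1$ has linearly independent image in this quotient.

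Using \eqref{eq:aut220} and the coefficient notation \eqref{eq:pol442420}, the plan is to take three explicit one–parameter subgroups and read off their first–order effect on $\sigma_{0,0}$ and $\sigma_{0,2}$:
\emph{(i)} $x_0\mapsto x_0+tx_1$ changes $\sigma_{0,0}$ by $t(4a_0x_0^3x_1+3a_1x_0^2x_1^2+2a_2x_0x_1^3)$ and $\sigma_{0,2}$ by $t(2c_0x_0x_1+c_1x_1^2)$, contributing $(2a_2,0,c_1)$ in the $(\partial_{a_3},\partial_{a_4},\partial_{c_2})$–basis;
\emph{(ii)} $y_1\mapsto y_1+ty_0$ sends $\sigma_{0,0}\mapsto\sigma_{0,0}+t\sigma_{0,1}$ and $\sigma_{0,2}\mapsto\sigma_{0,2}+t\sigma_{1,2}$, contributing $(b_3,b_4,g_2)$;
\emph{(iii)} $y_2\mapsto y_2+tx_1^2y_0$ sends $\sigma_{0,0}\mapsto\sigma_{0,0}+t\,x_1^2\sigma_{0,2}$ and $\sigma_{0,2}\mapsto\sigma_{0,2}+2th_0x_1^2$, contributing $(c_1,0,2h_0)$.
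All remaining first–order changes produced by these flows alter only coefficients unconstrained on $\mathcal{U}$ (the $b_i$, $d_i$, $g_i$, and $h_0$), hence lie in $T_{[\mathcal{Q}^1]}\mathcal{U}$. Therefore, modulo $T_{[\mathcal{Q}^1]}\mathcal{U}$, the three tangent vectors are the rows of
\[
\begin{pmatrix}
2a_2 & 0 & c_1\\
b_3 & b_4 & g_2\\
c_1 & 0 & 2h_0
\end{pmatrix},
\]
whose determinant is $b_4\,(4a_2h_0-c_1^2)$, nonzero for a general $\mathcal{Q}^1\in\mathcal{U}$. This yields the surjectivity of $d\chi_{(\mathrm{id},\mathcal{Q}^1)}$, hence the dominance of $\chi$.

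To turn this into a complete proof one proceeds as in Proposition~\ref{prop:dom433222b}: identify an affine chart of $\Gcal{2}{2}{0}$ (for instance $\{\alpha_{0,0}\ne0,\ \gamma_{2,2}\ne0\}$, viewing $\Gcal{2}{2}{0}$ as an open in a toric variety as customary), write out the $22$ coefficients $f_0,\dots,f_{21}$ of $\varphi^*\mathcal{Q}^1$ as explicit polynomials in the group parameters and in the $a_i,b_i,c_i,d_i,g_i,h_0$, and check that the Jacobian of $f=(f_0:\dots:f_{21})$ at $\varphi=\mathrm{id}$ has rank $21$ for a general $\mathcal{Q}^1\in\mathcal{U}$ — or, as a shortcut, for a single explicit member of $\mathcal{U}$ in the spirit of Remark~\ref{rmk:example433222}. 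The only point that needs care is the bookkeeping behind the displayed $3\times 3$ matrix: one must verify that the three chosen infinitesimal automorphisms do not move $a_3,a_4,c_2$ in a linearly dependent way, which is exactly the non-vanishing of $b_4(4a_2h_0-c_1^2)$; everything else is a routine symbolic computation.
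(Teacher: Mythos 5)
Your proposal is correct and follows essentially the same route as the paper, which likewise proves dominance by checking that the Jacobian of $\chi$ at $(\mathrm{id},\mathcal{Q}^1)$ has maximal rank on an affine chart of $\Gcal{2}{2}{0}$. Your refinement — observing that the image of the differential automatically contains $T_{[\mathcal{Q}^1]}\mathcal{U}$, so that full rank reduces to the invertibility of the $3\times 3$ matrix recording how the three flows move $a_3,a_4,c_2$, with determinant $b_4(4a_2h_0-c_1^2)$ not identically zero on $\mathcal{U}$ — is a valid and hand-checkable substitute for the computer verification the paper implicitly relies on.
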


\begin{proof}
Impose \eqref{eq:reqt442420}. Fix $\mathcal{Q}^1\in\mathcal{U}$ general, let $f_0,\dots,f_{21}$ be the coefficients of $\varphi^*\mathcal{Q}^1$, and restrict to the open $\{\alpha_{0,0}\ne0,\ \beta_{0,0}\ne0\}\cong\mathbb{A}^{13}$ in $\Gcal{2}{2}{0}$. The map $f=(f_0/f_{21},\dots,f_{20}/f_{21})$ parametrizes $\Gcal{2}{2}{0}\cdot\mathcal{Q}^1$; the Jacobian of $\chi$ at $\mathrm{id}$ has maximal rank, hence $\chi$ is dominant.
\end{proof}

\begin{Remark}\label{rmk:dense442420}
We have $\overline{\chi(\Gcal{2}{2}{0}\times\mathcal{U})}=\PVV{4}{4}{2}{4}{2}{0}$, and $\mathcal{D}_{(4,4,2,4,2,0)} = \chi(\Gcal{2}{2}{0}\times\mathcal{U})$ is Zariski dense.
\end{Remark}

\begin{Proposition}\label{prop:elld442420}
Let $\pi\colon\mathcal{Q}^1\to\mathbb{P}^1$ be a smooth conic bundle of type $\TypeTwo{2}{2}{0}$ over a number field $k$. Then $\mathcal{Q}^1(k)$ is dense in $\mathcal{Q}^1$ if and only if $\mathcal{Q}^1$ has a $k$-point on a smooth fiber of $\pi$.
\end{Proposition}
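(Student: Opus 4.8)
The plan is to exploit the elliptic surface structure underlying a type $\TypeTwo{2}{2}{0}$ conic bundle and run a Merel/Mazur-free density argument using the Mordell–Weil group of a relative Jacobian. First I would observe that the ``only if'' direction is immediate: if $\mathcal{Q}^1(k)$ is dense, then since the singular fibers lie over finitely many points, a dense set of $k$-points lies on smooth fibers, so in particular there is a $k$-point on a smooth fiber. For the ``if'' direction, suppose $p\in\mathcal{Q}^1(k)$ lies on a smooth fiber $F_{t_0}=\pi^{-1}(t_0)$ with $t_0\in\mathbb{P}^1(k)$. Projection from $p$ inside that fiber gives a $k$-isomorphism $F_{t_0}\cong\mathbb{P}^1$, hence $F_{t_0}(k)$ is infinite; more importantly the existence of one $k$-point on a smooth fiber, by the conic-bundle structure, produces a rational $1$-section, hence a birational map $\mathcal{Q}^1\dashrightarrow\mathbb{P}^1\times\mathbb{P}^1$ only if the associated Brauer class vanishes — but we cannot assume that. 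Instead, the right object is the relative Picard scheme: a smooth conic over a field $F$ with an $F$-point is $\cong\mathbb{P}^1_F$, and on it $\deg 2$ divisor classes are defined over $F$ always, so the generic fiber $\mathcal{Q}^1_\eta$ over $F=k(t)$ always admits a degree-$2$ zero-cycle. Thus the relative symmetric square $\mathrm{Sym}^2_{\mathbb{P}^1}\mathcal{Q}^1$, resolved appropriately, is an elliptic surface $\mathcal{E}\to\mathbb{P}^1$ (the Jacobian fibration of the conic bundle, of the expected Kodaira type for $d_{\mathcal{Q}^1}=8$), and $\mathcal{Q}^1$ is a torsor under $\mathcal{E}$.

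The key steps, in order: (1) Construct the Jacobian elliptic fibration $j\colon\mathcal{E}\to\mathbb{P}^1$ associated to $\pi$. Concretely, diagonalize the generic fiber via Proposition~\ref{diag} to the Weierstrass-type form $a(t)z_0^2+b(t)z_1^2=z_2^2$ and take the associated genus-one curve's Jacobian; for type $\TypeTwo{2}{2}{0}$ with $d_{\mathcal{Q}^1}=8$ one computes that $\mathcal{E}$ has arithmetic genus $1$ (i.e.\ $\chi(\mathcal{O}_{\mathcal{E}})=1$), so $\mathcal{E}$ is a rational elliptic surface. (2) Show $\mathcal{Q}^1$ is a torsor under $\mathcal{E}$ over $\mathbb{P}^1$, with class in the Tate–Shafarevich group; the existence of a $k$-point on a smooth fiber of $\pi$ translates into the statement that the torsor $\mathcal{Q}^1_\eta$ over $F=k(t)$ acquires a rational point after the specialization $t\mapsto t_0$, and more to the point it guarantees a nonempty $k(t_0)$-fiber, which we use not to trivialize the torsor but to feed specialization. (3) Use the fact that $\mathcal{E}\to\mathbb{P}^1$ being a rational elliptic surface over an infinite field $k$ with a section has Mordell–Weil rank that is positive after base change to $k$ as soon as there is \emph{some} nontrivial geometric section defined over $k$ — but since we only have a point on $\mathcal{Q}^1$, not $\mathcal{E}$, the cleaner route is: the $k$-point $p$ on the smooth fiber $F_{t_0}$ makes $F_{t_0}\cong\mathbb{P}^1_k$, and the monodromy/specialization of the abelian-variety fibration forces $\mathcal{Q}^1_{t}(k)\neq\varnothing$ for infinitely many $t\in\mathbb{P}^1(k)$ by a Hilbert-irreducibility plus ``root-finding over $C_1$''-type argument — here using that $k$ is a number field so Hilbert irreducibility applies. (4) Conclude that the union of these smooth fibers, each $\cong\mathbb{P}^1_k$ hence with dense $k$-points, is Zariski dense in $\mathcal{Q}^1$.

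The main obstacle is step (3): turning ``one $k$-point on one smooth fiber'' into ``$k$-points on infinitely many smooth fibers.'' The honest way to do this over a number field is via the theory of elliptic surfaces: one shows that the Jacobian fibration $\mathcal{E}\to\mathbb{P}^1$, being rational, has a multisection of small degree, hence (after the base point $p$ pins down a rational point on $\mathcal{Q}^1_{t_0}$, which identifies $\mathcal{Q}^1_{t_0}\cong\mathcal{E}_{t_0}$) one produces, using the group law on the nearby fibers of $\mathcal{E}$ and Silverman's specialization theorem, a positive-rank situation on a Hilbert-dense set of fibers; translating back through the torsor structure and the single rational fiber $F_{t_0}$ gives dense $k$-points on $\mathcal{Q}^1$. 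The subtlety I expect to have to handle carefully is that a priori the torsor $\mathcal{Q}^1$ need not be trivial (the Brauer class may be nonzero), so $\mathcal{Q}^1(k)$ density is \emph{not} equivalent to $\mathcal{E}(k)$ density; the $k$-point on a smooth fiber is exactly the input that lets one bypass this, because it exhibits at least one fiber where the torsor is locally trivial and lets the elliptic-surface rank propagate. I would make this rigorous by invoking the standard results on density of rational points on rational elliptic surfaces over number fields together with Proposition~\ref{Enr} to spread density from a unirational multisection.
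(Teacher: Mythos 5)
Your ``only if'' direction is fine, and your final step (4) --- once one has $k$-points on a Zariski-dense set of smooth fibers of $\pi$, each such fiber being $\cong\PP^1_k$ --- is the right endgame. The problem is the central construction. The fibers of $\pi\colon\mathcal{Q}^1\to\PP^1$ are conics, of genus \emph{zero}: there is no ``Jacobian elliptic fibration'' attached to $\pi$, no torsor structure under an abelian scheme over $\PP^1$, and no Tate--Shafarevich class. The diagonal form $a(t)z_0^2+b(t)z_1^2=z_2^2$ is a conic over $k(t)$, not a genus-one curve, and the obstruction to its having a point is the $2$-torsion Brauer class $(a,b)$, not an element of a Tate--Shafarevich group; likewise the relative $\mathrm{Sym}^2$ of a conic bundle has fibers $\mathrm{Sym}^2\PP^1\cong\PP^2$, not elliptic curves. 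So the object $\mathcal{E}$ on which steps (1)--(3) rest does not exist, and step (3) (``monodromy/specialization plus Hilbert irreducibility plus root-finding over $C_1$'' --- the latter irrelevant over a number field) has no actual mechanism behind it.

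The elliptic structure that makes the statement true comes from a \emph{second} fibration, special to type $\TypeTwo{2}{2}{0}$: since $a_0=a_1=2$, the assignment $(x_0,x_1,y_0,y_1,y_2)\mapsto[y_0:y_1]$ defines a morphism $\phi\colon\mathcal{Q}^1\to\PP^1$ whose general fiber is a double cover of $\PP^1_{x_0,x_1}$ branched at four points, hence a genus-one curve; each fiber of $\phi$ is a $2$-section of $\pi$ and each fiber of $\pi$ is a $2$-section of $\phi$. If some fiber of $\phi$ splits over $k$, its components are sections of $\pi$ and $\mathcal{Q}^1$ is rational; otherwise all fibers of $\phi$ are irreducible, the smooth fiber $C_p$ of $\pi$ through the given $k$-point is $\cong\PP^1_k$ and is a rational $2$-section of $\phi$, and base-changing $\phi$ along $\phi|_{C_p}\colon C_p\to\PP^1$ yields a genus-one fibration $S\to C_p\cong\PP^1_k$ with a (diagonal) section and irreducible fibers. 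The density theorem for such elliptic surfaces over number fields \cite[Corollary~8.2(a)]{HT00} then gives that $S(k)$ is dense, and $S$ dominates $\mathcal{Q}^1$. Identifying this second fibration and using the rational fiber $C_p$ as a multisection of it is the idea your proposal is missing.
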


\begin{proof}
Consider
\[
\phi\colon \mathcal{Q}^1\to\mathbb{P}^1,\quad (x_0,x_1,y_0,y_1,y_2)\mapsto [y_0:y_1].
\]
A general fiber $\Gamma_y$ is a double cover of $\mathbb{P}^1$ branched in four points, hence a genus-one curve. If some $\Gamma_{\bar y}$ splits over $k$ as two components $\Gamma_1\cup\Gamma_2$, then each $\Gamma_i$ is a section of $\pi$, so $\mathcal{Q}^1$ is rational. Otherwise all fibers of $\phi$ are irreducible. Let $p\in\mathcal{Q}^1(k)$ lie on a smooth fiber $C_p$ of $\pi$. Base-change by $C_p\to \mathbb{P}^1$:
\[
\begin{tikzcd}
S\coloneqq \mathcal{Q}^1\times_{\mathbb{P}^1}C_p \arrow[r] \arrow[d, "\psi"'] & \mathcal{Q}^1 \arrow[d, "\phi"] \\
C_p \arrow[r] & \mathbb{P}^1
\end{tikzcd}
\]
The $2$-section $C_p$ of $\phi$ induces a section of $\psi\colon S\to C_p\cong\mathbb{P}^1$, giving an elliptic fibration with irreducible fibers and a rational curve on $S$. By \cite[Corollary~8.2(a)]{HT00}, $S(k)$ is dense; since $S$ dominates $\mathcal{Q}^1$, the claim follows.
\end{proof}

\begin{Proposition}\label{prop:elld442420ns}
Let $k$ be a perfect, non algebraically closed field. A general conic bundle in $\mathcal{D}_{(4,4,2,4,2,0)}$ has no section and is not rational. 
\end{Proposition}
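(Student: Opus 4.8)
The plan is to mirror the argument used for type $(2,1,1)$ in Proposition~\ref{prop:nosec433222}, adapting it to the multidegree $(4,4,2,4,2,0)$: produce a nonzero $2$--torsion residue of the Brauer class of the generic fiber at a suitable point $p\in\mathbb{P}^1$, then conclude by the Faddeev sequence that the conic bundle admits no rational section, and combine this with irreducibility of the fibers to get non-rationality. First I would reduce to a convenient subfamily of $\mathcal{D}_{(4,4,2,4,2,0)}$ obtained by setting enough coefficients in \eqref{eq:pol442420} to zero so that the diagonalization from Proposition~\ref{diag} becomes tractable, observing that the general case then follows from Lemma~\ref{sect} (a family of conic bundles with a section degenerating to a smooth conic bundle still has a section, so absence of a section is stable under this specialization in the reverse direction). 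On this subfamily I would apply Proposition~\ref{diag} to the generic fiber over $F=k(t)$, normalize the diagonal form to $a(t)z_0^2+b(t)z_1^2-z_2^2$, and track the orders of vanishing of $a$ and $b$ at a chosen point $t=t_0$ (most naturally $t=0$, after arranging $\sigma_{2,2}=h_0\ne 0$ so that the denominators are units there).

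The key computational step is to choose the vanishing configuration so that $\partial^2_{t_0}(a,b)$ is visibly a non-square in $k(t_0)=k$: for instance, engineer $a$ to have a pole of even order and $b$ a zero of odd order at $t_0$ (as in the $(2,1,1)$ case, where $a$ had a double pole and $b$ a simple zero), so that the residue formula $\partial^2_{t_0}(a,b)=\overline{a\,b^{v_{t_0}(a)}}$ collapses to the class mod $t_0$ of an explicit rational function of the coefficients, and then check that for general values of the remaining parameters this class is not a square in $k$ (using that $k$ is not algebraically closed, so $k^\times/k^{\times 2}\ne 1$, and that "being a square" is a proper Zariski-closed condition on the parameters, hence fails generically). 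By the Faddeev exact sequence recalled in Subsection~\ref{Bra}, a single nonzero residue forces the Brauer class $(a(t),b(t))$ to be nonzero in $\Br(k(t))[2]$, which by Enriques/Witt is equivalent to the nonexistence of a rational section of $\pi$.

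For non-rationality I would then argue as in Proposition~\ref{prop:min433222}: exhibit that the discriminant $\delta_{\mathcal{Q}^1}$ is irreducible over $k$ for general parameters (a direct degree count plus genericity), so no geometric fiber splits into two $k$-rational components; combined with the relative Picard rank being $1$ this shows $\mathcal{Q}^1$ is minimal, hence not $k$-rational, since a minimal conic bundle surface with no rational section is not rational over $k$. The main obstacle I anticipate is purely computational: after the reduction and diagonalization, the expression for $b(t)$ (and especially $\beta_2$, the quantity one divides by) will be a large polynomial in the coefficients and $t$, exactly as in the displayed formulas of Proposition~\ref{prop:nosec433222}, and one must choose the zeroing of coefficients carefully so that (i) the leading behavior at $t_0$ has the right parity, (ii) the resulting residue is a genuinely non-constant function of the free parameters so that non-squareness is generic, and (iii) the specialization still lies in $\mathcal{D}_{(4,4,2,4,2,0)}$ and the bundle stays smooth with irreducible discriminant. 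Verifying non-squareness for general parameters — rather than just at one lucky point — is the delicate part, but it reduces to noting that the residue is a nonconstant rational function and $k$ is infinite and not algebraically closed, so its values avoid $k^{\times 2}$ on a dense set.
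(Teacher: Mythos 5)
Your proposal follows essentially the same route as the paper: the paper's (very terse) proof simply diagonalizes the quadratic form and exhibits a Brauer residue equal to $4d_0h_0-g_0^2$, which is generically a non-square, exactly the residue computation you outline. Your additional care about the specialization via Lemma~\ref{sect} and the minimality/irreducible-discriminant step for non-rationality fills in details the paper leaves implicit, but the core argument is identical.
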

\begin{proof}
Putting the equation of the conic bundle in diagonal form we see that there is a residue equal to $4d_0h_0 - g_0^2$ which, in general, is not a square.
\end{proof}

\section{Conic bundles of type \texorpdfstring{$\TypeTwo{3}{1}{0}$}{(3,1,0)}}\label{sec:643210}
We now consider conic bundles of type $(3,1,0)$, that is
\[
\mathcal{Q}^1=\{\sigma_{0,0}y_0^2+\sigma_{0,1}y_0y_1+\sigma_{0,2}y_0y_2+\sigma_{1,1}y_1^2+\sigma_{1,2}y_1y_2+\sigma_{2,2}y_2^2=0\}\subset \Tcal_{3,1,0},
\]
with $\sigma_{0,0}\in k[x_0,x_1]_6$, $\sigma_{0,1}\in k[x_0,x_1]_4$, $\sigma_{0,2}\in k[x_0,x_1]_3$, $\sigma_{1,1}\in k[x_0,x_1]_2$, $\sigma_{1,2}\in k[x_0,x_1]_1$, $\sigma_{2,2}\in k$.

\begin{Proposition}\label{prop:case643210}
A general conic bundle $\mathcal{Q}^1$ of type $(3,1,0)$ over a $C_1$ field is unirational.
\end{Proposition}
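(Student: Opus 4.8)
The plan is to show that a general $\mathcal{Q}^1$ of type $(3,1,0)$ carries a $k$-rational $2$-section of $\pi$ \emph{automatically} --- with no tangency condition imposed, in contrast with the types $(2,1,1)$ and $(2,2,0)$ --- and then to conclude by Enriques' criterion. The candidate is $C\coloneqq\mathcal{Q}^1\cap\Sigma$, where $\Sigma\coloneqq\{y_0=0\}\subset\Tcal_{3,1,0}$; since $\deg y_0=(-3,1)$ while $\deg y_1=(-1,1)$ and $\deg y_2=(0,1)$, the divisor $\Sigma$ is a $\PP^1$-bundle over $\PP^1$ (the Hirzebruch surface $\PP(\OO_{\PP^1}(1)\oplus\OO_{\PP^1})$), and
\[
C=\{\sigma_{1,1}y_1^2+\sigma_{1,2}y_1y_2+\sigma_{2,2}y_2^2=0\}\subset\Sigma,\qquad \deg\sigma_{1,1}=2,\quad \deg\sigma_{1,2}=1,\quad \sigma_{2,2}\in k^{\times},
\]
the last because $\sigma_{2,2}$ is a nonzero constant (Remark~\ref{not0}). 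For general $\mathcal{Q}^1$ one has $\Sigma\not\subset\mathcal{Q}^1$, so $C$ is an honest curve, and since $\sigma_{2,2}\neq0$ the fibre of $\pi|_C$ over a general $t\in\PP^1$ consists of two distinct points; hence $\pi|_C\colon C\to\PP^1$ is a dominant morphism of degree $2$, i.e. a $2$-section of $\pi$.

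The point of choosing $y_0=0$ is that among the three coordinate sub-bundles it minimizes the discriminant: $\pi|_C$ realizes $C$ as a double cover of $\PP^1$ branched along the zero scheme of $\Delta\coloneqq\sigma_{1,2}^{\,2}-4\sigma_{1,1}\sigma_{2,2}$, a binary form of degree $2$ (both $\sigma_{1,2}^{\,2}$ and $\sigma_{1,1}\sigma_{2,2}$ have degree $2$), whereas $\{y_1=0\}$ and $\{y_2=0\}$ would give discriminants of degrees $6$ and $8$. For general $\mathcal{Q}^1$ the form $\Delta$ is squarefree and $\sigma_{1,1},\sigma_{1,2}$ are coprime, and one checks --- the singular locus of $C$ lies over the roots of $\Delta$, where coprimality forces smoothness --- that $C$ is smooth; being a branched double cover of $\PP^1$ it is moreover geometrically connected, so by Riemann--Hurwitz $C$ is a smooth, geometrically integral curve of genus $0$ over $k$. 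Its anticanonical system then embeds $C$ as a smooth conic in $\PP^2_k$; since $k$ is $C_1$, a ternary quadratic form has a nontrivial zero (Remark~\ref{lang}), so $C(k)\neq\varnothing$ and $C\cong_k\PP^1$ --- in particular $C$ is $k$-unirational. (If $\Delta$ were instead a square, $C$ would split geometrically into two sections of $\pi$, and when these are $k$-rational the surface $\mathcal{Q}^1$ is outright $k$-rational.) Applying Proposition~\ref{Enr} with $W=\PP^1$ and the unirational subvariety $Z=C\subset\mathcal{Q}^1$ then gives that $\mathcal{Q}^1$ is $k$-unirational.

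For the openness statement the key observation is that the conditions ``$\Sigma\not\subset\mathcal{Q}^1$'', ``$\Delta$ squarefree'', ``$\sigma_{1,1},\sigma_{1,2}$ coprime'', and ``$C$ smooth'' are Zariski open and jointly nonempty on $\PVV{6}{4}{3}{2}{1}{0}$, so the construction above produces a rational $2$-section --- hence unirationality --- on a dense open subset, with no automorphism-orbit argument needed. To identify this with the unirational locus exactly one must still show that on the complementary closed locus, where $\Delta$ acquires a double root, $\mathcal{Q}^1$ is \emph{not} unirational; after diagonalizing the generic fibre via Proposition~\ref{diag}, the natural obstruction is a nontrivial residue of its $2$-torsion Brauer class concentrated at that double root. \emph{Establishing this non-unirationality along the boundary is the delicate point;} the rest --- the genus computation, the $C_1$ fact for conics, and Enriques' criterion --- is routine.
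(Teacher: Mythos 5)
Your proof is correct and follows essentially the same route as the paper: both intersect $\mathcal{Q}^1$ with $\{y_0=0\}$ to obtain a $2$-section that is (birational to) a smooth conic, use the $C_1$ hypothesis to produce a $k$-point on it, and conclude via Proposition~\ref{Enr}. Your write-up merely fills in the details the paper leaves implicit (the degree-$2$ branch divisor, the genus computation, irreducibility), and your closing paragraph on openness concerns a claim not actually part of this Proposition's statement.
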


\begin{proof}
Intersect with $H=\{y_0=0\}$ to obtain:
\[
\mathcal{Q}^1\cap H=\{\sigma_{1,1}y_1^2+\sigma_{1,2}y_1y_2+\sigma_{2,2}y_2^2=0\}\to \mathbb{P}^1_{x_0,x_1}.
\]
For general $\sigma_{1,1},\sigma_{1,2},\sigma_{2,2}$ this is birational to a smooth conic over a $C_1$ field, hence it has a point and it is rational. Thus $\mathcal{Q}^1\cap H$ is a rational multisection, and Proposition~\ref{Enr} applies.
\end{proof}

\begin{Proposition}\label{nr643}
A general conic bundle $\mathcal{Q}^1$ of type $(3,1,0)$, over a perfect non-algebraically closed field, is minimal and non rational.
\end{Proposition}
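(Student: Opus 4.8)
The plan is to mirror the strategy used for type $(2,1,1)$ in Proposition~\ref{prop:min433222}, namely to exhibit a nontrivial $2$-torsion Brauer residue and to show irreducibility of the discriminant (hence extremality), while taking care that both statements are generic-open conditions on the $21$-dimensional parameter space. First I would recall that $\mathcal{Q}^1$ of type $(3,1,0)$ is extremal if and only if all its fibres over $\PP^1$ are geometrically irreducible over $k$; since $k$ is not algebraically closed, it suffices that the discriminant polynomial $\delta_{\mathcal{Q}^1}\in k[x_0,x_1]_8$ be irreducible over $k$ for general parameters, and to note that reducibility of a geometric fibre would in any case produce components not defined over $k$. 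The discriminant $\delta_{\mathcal{Q}^1}$ is a fixed universal polynomial in the $\sigma_{i,j}$ (given by the formula of Proposition~\ref{diag} cleared of denominators), so one only needs to check that it is not identically reducible, e.g.\ by specializing to one explicit member with irreducible octic discriminant; irreducibility then holds on a dense open subset.

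Next I would produce the section obstruction. Working on the generic fibre over $F=k(t)$ (with $t=x_1/x_0$), use Proposition~\ref{diag} with $\alpha_0=\sigma_{0,0}(1,t)$, etc., to diagonalize the conic to $a z_0^2 + b z_1^2 - z_2^2 = 0$ with $a,b\in F^{\times}$ explicit rational functions of $t$ and of the coefficients. The conic bundle has a rational section if and only if the Brauer class $(a,b)$ vanishes in $\Br(F)[2]$, so it suffices to locate one place $p\in\PP^1$ where the residue $\partial^2_p(a,b)$ is a nonsquare in $k(p)$. As in the $(2,1,1)$ case, the natural choice is a place where the relevant valuations of $a$ and $b$ have opposite parities; concretely, after arranging (by genericity, or by first proving the statement on a convenient linear slice and invoking Lemma~\ref{sect}) that $\sigma_{1,2}$ and $\sigma_{2,2}$ are in suitably general position, one computes the residue at $t=0$ (or at $t=\infty$, i.e.\ the place governed by the leading coefficients) and reads off that it equals, up to squares, a monomial expression in the coefficients such as a ratio involving $\sigma_{2,2}$, the constant term of $\sigma_{1,2}^2$, and the constant term of $\sigma_{0,0}$; this is manifestly a nonsquare for general values in an infinite non-algebraically-closed $k$. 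Having a nonzero residue shows $(a,b)\neq 0$, hence no rational section; combined with extremality this gives minimality, and minimality together with Enriques' criterion (or simply the Brauer obstruction, since a rational conic bundle surface over $\PP^1$ with $\rho=1$ would have a section) gives non-rationality.

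Finally I would package the genericity: the locus where $\delta_{\mathcal{Q}^1}$ is irreducible is open, and the locus where the chosen residue is a nonsquare is the complement of a proper closed subset (the vanishing of a fixed polynomial in the coefficients, together with the locus where that polynomial value lies in $k^{\times 2}$ — which is not Zariski dense when $k$ is infinite and not algebraically closed), so their intersection is a dense open set of $\PVV{6}{4}{3}{2}{1}{0}$. The main obstacle I anticipate is bookkeeping: making sure the diagonalization of Proposition~\ref{diag} is valid generically (i.e.\ that $\alpha_0\neq 0$ and $\alpha_1^2-4\alpha_0\alpha_3\neq 0$ as elements of $F$, which holds for general parameters since $\sigma_{0,0}\not\equiv 0$ and $\sigma_{0,1}^2-4\sigma_{0,0}\sigma_{1,1}\not\equiv 0$), and then choosing the place $p$ so that the residue formula $\partial^2_p(a,b)$ does not collapse to a square identically — which, exactly as in Proposition~\ref{prop:nosec433222}, may require first restricting to a sub-family where enough coefficients vanish so the computation is tractable, proving minimality there, and then spreading out to the general member via Lemma~\ref{sect} for the section part and via openness of irreducibility for extremality.
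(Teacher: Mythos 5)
Your proposal follows essentially the same route as the paper: the paper's own proof is a single sentence ("putting the equation in normal form one sees that there is a non trivial residue"), implicitly relying on the same diagonalization via Proposition~\ref{diag} and the same extremality argument as in Propositions~\ref{prop:nosec433222} and~\ref{prop:min433222}, which is exactly the strategy you spell out. Your write-up is in fact more detailed than the paper's (in particular on extremality and on the genericity/openness bookkeeping), and contains no gap beyond the explicit residue computation that the paper itself also leaves to the reader.
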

\begin{proof}
Putting the equation of $\mathcal{Q}^1$ in normal form one sees that there is a non trivial residue.
\end{proof}

\section{Conic bundles of type \texorpdfstring{$\TypeTwo{4}{0}{0}$}{(4,0,0)}}\label{sec:844000}
Consider
\[
\mathcal{Q}^1
=\{\sigma_{0,0}y_0^2+\sigma_{0,1}y_0y_1+\sigma_{0,2}y_0y_2+\sigma_{1,1}y_1^2+\sigma_{1,2}y_1y_2+\sigma_{2,2}y_2^2=0\}\subset \Tcal_{4,0,0},
\]
with $\sigma_{0,0}\in k[x_0,x_1]_8$, $\sigma_{0,1},\sigma_{0,2}\in k[x_0,x_1]_4$, and $\sigma_{1,1},\sigma_{1,2},\sigma_{2,2}\in k$. Write
\stepcounter{thm}
\begin{equation}\label{eq:pol844000}
\begin{aligned}
\sigma_{0,0} &= a_0x_0^8 + a_1x_0^7x_1 + \cdots + a_8x_1^8,\\
\sigma_{0,1} &= b_0x_0^4 + b_1x_0^3x_1 + \cdots + b_4x_1^4,\\
\sigma_{0,2} &= d_0x_0^4 + d_1x_0^3x_1 + \cdots + d_4x_1^4,\\
\sigma_{1,1} &= c_0,\qquad
\sigma_{1,2} = c_1,\qquad
\sigma_{2,2} = c_2 .
\end{aligned}
\end{equation}

\begin{Proposition}\label{prop:Me844000}
Assume $c_2\in k^\times$, $c_1^2-4c_0c_2\in k^\times$, and that the leading coefficient of $-\frac{4}{c_2}\delta_{\mathcal{Q}^1}$ is a square in $k$. Assume furthermore that $k$ is perfect. Then $\mathcal{Q}^1$ is unirational.
\end{Proposition}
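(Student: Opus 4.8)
The plan is to bring the generic fibre of $\pi$ into diagonal form, read off its Brauer class, and recognise it as one of the classes annihilated by Mestre's change of variable. First I would reorder the fibre coordinates as $(y_2,y_1,y_0)$ so that, in the notation of Proposition~\ref{diag}, one has $\alpha_0=\sigma_{2,2}=c_2$, $\alpha_1=\sigma_{1,2}=c_1$, $\alpha_3=\sigma_{1,1}=c_0$ (and $\alpha_2=\sigma_{0,2}$, $\alpha_4=\sigma_{0,1}$, $\alpha_5=\sigma_{0,0}$). The two standing hypotheses $c_2\in k^\times$ and $c_1^2-4c_0c_2\in k^\times$ are exactly the non-degeneracy conditions $\alpha_0\neq 0$ and $\alpha_1^2-4\alpha_0\alpha_3\neq 0$ required by Proposition~\ref{diag}, so over $k(t)$ the quadratic form defining $\mathcal{Q}^1$ is congruent to
\[
c_2\,z_0^2+c_2\bigl(4c_0c_2-c_1^2\bigr)\,z_1^2+4\bigl(4c_0c_2-c_1^2\bigr)\,\delta_{\mathcal{Q}^1}(t)\,z_2^2 ,
\]
where $\delta_{\mathcal{Q}^1}\in k[t]$ is the discriminant of Proposition~\ref{diag}, a polynomial of degree $8$ because $d_S=8$ (after a $k$-linear change of coordinates on $\mathbb{P}^1$, possible since $k$ is infinite, we may and do assume the fibre over $t=\infty$ is smooth, so that $\deg_t\delta_{\mathcal{Q}^1}=8$).

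Setting $a\coloneqq c_1^2-4c_0c_2\in k^\times$ and $g(t)\coloneqq-\tfrac{4}{c_2}\,\delta_{\mathcal{Q}^1}(t)$ and dividing by $c_2$, the generic fibre of $\pi$ becomes the conic $\{\,a\,z_1^2-a\,g(t)\,z_2^2=z_0^2\,\}$ over $k(t)$, whose Brauer class is $(a,-a\,g)=(a,-a)+(a,g)=(a,g)$, since $(a,-a)=0$. Because $a$ is a nonzero constant, the residue $\partial^2_\lambda(a,g)=\bar a$ at each simple root $\lambda$ of $g$ is the same element of $k(\lambda)^\times/k(\lambda)^{\times 2}$ for every $\lambda$; and at $t=\infty$ one has $v_\infty(g)=-8$, even, so $\partial^2_\infty(a,g)=\overline{a^{-8}}$ is a square and the class is unramified there. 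Thus $(a,g)$ is a $2$-torsion class with at most eight simple poles, all carrying the constant residue $\bar a$: precisely the configuration treated in \cite{Mes94}.

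The final hypothesis, that the leading coefficient of $g=-\tfrac{4}{c_2}\delta_{\mathcal{Q}^1}$ is a square in $k$, is exactly what makes the polynomial part of $\sqrt{g}$ defined over $k$: one writes $g=h^2+r$ with $h\in k[t]$ of degree $4$ (its leading coefficient a chosen square root of $\operatorname{lead}(g)$, available over $k$ by the hypothesis and $\operatorname{char}k\ne 2$) and $r\in k[t]$ of degree $\le 3$. This is the input of Mestre's construction: since $k$ is perfect with $\operatorname{char}k\ne 2$, \cite{Mes94} produces a non-constant $k$-rational map $\phi\colon\mathbb{P}^1_u\dashrightarrow\mathbb{P}^1_t$ along which the pulled-back class $\bigl(a,\,g(\phi(u))\bigr)$ vanishes in $\Br(k(u))$, equivalently for which the conic $\{\,a\,z_1^2-a\,g(\phi(u))\,z_2^2=z_0^2\,\}$ has a $k(u)$-point. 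Hence the fibre product $\mathcal{Q}^1\times_{\mathbb{P}^1_t}\mathbb{P}^1_u$ admits a rational section over $\mathbb{P}^1_u$, so is $k$-birational to $\mathbb{P}^1_u\times\mathbb{P}^1$ by projection from that section; since $\phi$ is non-constant the first projection $\mathcal{Q}^1\times_{\mathbb{P}^1_t}\mathbb{P}^1_u\to\mathcal{Q}^1$ is dominant, and $\mathcal{Q}^1$ is therefore $k$-unirational. (Alternatively, the parametrisation yields an explicit $k$-rational multisection $Z\subset\mathcal{Q}^1$, and one concludes by Enriques' criterion, Proposition~\ref{Enr}.)

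The hard part is the invocation of Mestre's annihilation in the third paragraph: the square-leading-coefficient hypothesis is precisely what forces the data of his construction (the polynomial $h$, and the auxiliary conic it produces) to descend from $\overline{k}$ to $k$, so that the resulting multisection is genuinely defined over $k$ rather than only over $k(\sqrt{\operatorname{lead}(g)})$. One should also record that, $\mathcal{Q}^1$ being a smooth conic bundle surface, $\delta_{\mathcal{Q}^1}$ is squarefree, so $g$ has eight distinct roots and the eight-simple-pole picture is literal; if one wished to drop smoothness, replacing $g$ by its squarefree part only lowers the degree, which leaves the construction intact (indeed easier), modulo re-checking the polynomial-square-root step in that lower degree.
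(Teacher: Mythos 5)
Your proof is correct and follows essentially the same route as the paper's: both reorder the fibre coordinates so that Proposition~\ref{diag} applies with $\alpha_0=\sigma_{2,2}$, $\alpha_1=\sigma_{1,2}$, $\alpha_3=\sigma_{1,1}$, use the square-leading-coefficient hypothesis to normalize the resulting degree-$8$ polynomial, and conclude by Mestre's annihilation theorem \cite{Mes94}. The only difference is presentational: the paper performs the normalization by an explicit translation and rescaling to reach a monic $T$, whereas you make the Brauer class $(a,g)$ and its constant residues explicit before invoking \cite{Mes94}; the underlying argument is the same.
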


\begin{proof}
On the chart $x_1\ne0$ set $t=x_0/x_1$, and write as before $\delta_{\mathcal Q^1} = \delta_{\mathcal Q^1}(t)$ for the discriminant polynomial of $\mathcal Q^1$. Apply Proposition~\ref{diag} with $\alpha_0=\sigma_{2,2}$, $\alpha_1=\sigma_{1,2}$, $\alpha_3=\sigma_{1,1}$ to obtain a diagonal model
\(
\mathcal{Q}': P z_2^2+\frac{1}{c_1^2-4c_0c_2}z_0^2-z_1^2=0,
\)
where $P=-\frac{4}{\alpha_0}\delta_{\mathcal{Q}^1}$. Let
\[
\begin{aligned}
A &= \frac{-4a_1c_0c_2 + a_1c_1^2 + 2b_0b_1c_2 - b_0c_1d_1 - b_1c_1d_0 + 2c_0d_0d_1}{c_2},\\
B &= \frac{(-4a_0c_0 + b_0^2)c_2 + a_0c_1^2 - b_0c_1d_0 + d_0^2c_0}{c_2},
\end{aligned}
\]
and set $R(u)\coloneqq P\!\left(-\frac{A}{8B}-u\right)$; here $B\ne0$ is the leading coefficient of $P$, and $R$ is a polynomial of degree $8$ in $u$ with no $u^7$-term. If
\[
\frac{c_2}{(-4a_0c_0+b_0^2)c_2+a_0c_1^2-b_0c_1d_0+d_0^2c_0}
=\xi^2\in k^{*2},
\] (here $k^{*2}$ denotes the set of all nonzero squares in $k$),
we obtain
\(
Tw_2^2 + c\,w_0^2 - w_1^2=0
\)
with $T$ monic of degree $8$ and $c=\frac{1}{(c_1^2-4c_0c_2)B}\in k^\times$. Unirationality follows from \cite{Mes94}.
\end{proof}

\begin{Lemma}\label{lem:d8}
In $\PVV{8}{4}{4}{0}{0}{0}$, the subset
\[
\mathcal{U}^{\delta}\coloneqq \Big\{\mathcal{Q}^1:\ -\tfrac{4}{c_2}\delta_{\mathcal{Q}^1}\ \text{is a square in }k\Big\}
\]
is dense.
\end{Lemma}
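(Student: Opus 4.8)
The plan is to make the defining condition of $\mathcal U^{\delta}$ completely explicit and then exhibit a rational variety that dominates the parameter space and maps into $\mathcal U^{\delta}$. Here ``$-\tfrac{4}{c_2}\delta_{\mathcal Q^1}$ is a square in $k$'' is to be read, as in Proposition~\ref{prop:Me844000}, as the requirement that the \emph{leading coefficient} of the degree-$8$ polynomial $P:=-\tfrac{4}{c_2}\delta_{\mathcal Q^1}$ be a square in $k$ (requiring $P$ itself to be a perfect square in $k[t]$ would cut out only a locus of dimension $\le 18$ in $\PVV{8}{4}{4}{0}{0}{0}$, hence not dense). First I would compute $P$: by the discriminant formula of Proposition~\ref{diag} (with $\alpha_0,\dots,\alpha_5$ equal to $\sigma_{0,0},\sigma_{0,1},\sigma_{0,2},\sigma_{1,1},\sigma_{1,2},\sigma_{2,2}$ and $\sigma_{1,1}=c_0$, $\sigma_{1,2}=c_1$, $\sigma_{2,2}=c_2$),
\[
\delta_{\mathcal Q^1}=\Bigl(c_0c_2-\tfrac14 c_1^2\Bigr)\sigma_{0,0}-\tfrac14\bigl(c_2\sigma_{0,1}^2-c_1\sigma_{0,1}\sigma_{0,2}+c_0\sigma_{0,2}^2\bigr),
\]
so that on the chart $x_1\neq0$, $t=x_0/x_1$,
\[
P=\frac{c_1^2-4c_0c_2}{c_2}\,\sigma_{0,0}(t)+\sigma_{0,1}(t)^2-\frac{c_1}{c_2}\,\sigma_{0,1}(t)\sigma_{0,2}(t)+\frac{c_0}{c_2}\,\sigma_{0,2}(t)^2 ,
\]
and, in the notation of \eqref{eq:pol844000}, the coefficient of $t^8$ in $P$ is
\[
B=\frac{(c_1^2-4c_0c_2)\,a_0+c_2 b_0^2-c_1b_0d_0+c_0d_0^2}{c_2}\,.
\]
Under rescaling of the equation of $\mathcal Q^1$ by $\lambda$ the quantity $B$ gets multiplied by $\lambda^2$, so the condition ``$B\in k^{\times 2}$'' is well defined on $\PVV{8}{4}{4}{0}{0}{0}$, and $\mathcal U^{\delta}=\{\,c_2\neq0,\ B\in k^{\times 2}\,\}$.

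Next I would produce a dominating family. Passing to the affine cone $\mathbb{A}^{22}$ with coordinates $a_0,\dots,a_8,b_0,\dots,b_4,d_0,\dots,d_4,c_0,c_1,c_2$, it suffices to show that the cone over $\mathcal U^{\delta}$ is Zariski dense in $\mathbb{A}^{22}$. The crucial observation is that $B$ is affine-linear in $a_0$ with leading coefficient $(c_1^2-4c_0c_2)/c_2$, generically nonzero; hence on the open locus $\{c_2\neq0,\ c_1^2-4c_0c_2\neq0,\ s\neq0\}$ one may solve $B=s^2$ for $a_0$ and form the rational map
\[
\Phi\colon\ \mathbb{A}^{21}_{\vec q}\times\mathbb{A}^1_s\ \dashrightarrow\ \mathbb{A}^{22},\qquad
(\vec q,s)\ \longmapsto\ \Bigl(\vec q,\ a_0=\tfrac{c_2s^2-c_2b_0^2+c_1b_0d_0-c_0d_0^2}{c_1^2-4c_0c_2}\Bigr),
\]
where $\vec q$ ranges over the $21$ coordinates other than $a_0$. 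Ordering the coordinates as $(\vec q,s)$ on the source and $(\vec q,a_0)$ on the target, the Jacobian of $\Phi$ is lower triangular with diagonal blocks $I_{21}$ and $\partial a_0/\partial s=2c_2s/(c_1^2-4c_0c_2)$, which is nonzero at a general point (recall $\operatorname{char}k\neq2$); hence $\Phi$ is dominant.

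Finally I would conclude by a soft density argument. The source of $\Phi$ is rational, so, $k$ being infinite, it has a Zariski dense set of $k$-points, and this stays true after restricting to the nonempty open locus where $\Phi$ is defined. For a dominant rational map of irreducible $k$-varieties whose domain carries a Zariski dense set of $k$-points, the image of those $k$-points is Zariski dense: if not, its closure is a proper closed subset $Z$, and then the domain, being the closure of its own $k$-points, is contained in the closed set $\Phi^{-1}(Z)$, so $\Phi$ maps its domain into $Z$, contradicting dominance. Applying this to $\Phi$, and noting that every image point $\bigl(\vec q,a_0(\vec q,s)\bigr)$ with $s\in k^{\times}$ satisfies $c_2\neq0$ and $B=s^2\in k^{\times 2}$, we obtain that the cone over $\mathcal U^{\delta}$ is Zariski dense in $\mathbb{A}^{22}$, whence $\mathcal U^{\delta}$ is dense in $\PVV{8}{4}{4}{0}{0}{0}$. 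The only non-formal ingredient is the computation of $\delta_{\mathcal Q^1}$ together with the observation that $B$ depends on $a_0$ linearly and with invertible leading coefficient; everything after that is formal, the one point requiring care being that the property ``square in $k$'' is invisible over $\bar k$, so one must genuinely work with $k$-points (and check the scale invariance used above).
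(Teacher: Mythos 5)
Your proposal is correct and follows essentially the same route as the paper: the paper forms the incidence variety $\{\xi^2+\tfrac{4}{c_2}\delta_{\mathcal Q^1}=0\}$ and observes that the resulting cubic in $\mathbb{P}^6_{(a_0,b_0,c_0,c_1,c_2,d_0,\xi)}$ has a double point at the $a_0$-vertex, hence is rational with dense $k$-points — which is exactly your step of solving $B=s^2$ linearly for $a_0$ (the equation is linear in $a_0$ with coefficient $c_1^2-4c_0c_2$). Your write-up is somewhat more explicit about the intended reading of the condition (leading coefficient of $-\tfrac{4}{c_2}\delta_{\mathcal Q^1}$, matching Proposition~\ref{prop:Me844000}), the scale-invariance on $\PVV{8}{4}{4}{0}{0}{0}$, and the density-of-image argument, but the underlying idea is identical.
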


\begin{proof}
Consider the incidence scheme
\(
\mathcal{I}=\{(\xi,\mathcal{Q}^1):\ \xi^2+\tfrac{4}{c_2}\delta_{\mathcal{Q}^1}=0\}
\subset k\times \PVV{8}{4}{4}{0}{0}{0}
\)
with projection $\pi_2$; then $\mathcal{U}^{\delta}=\pi_2(\mathcal{I})$. Writing
\[
-\tfrac{4}{c_2}\delta_{\mathcal{Q}^1}=\frac{(-4a_0c_0 + b_0^2)c_2 + a_0c_1^2 - b_0c_1d_0 + d_0^2c_0}{c_2},
\]
let
\[
Z=\{\xi^2c_2 -(b_0^2-4a_0c_0)c_2 - a_0c_1^2 + b_0c_1d_0 - d_0^2c_0 = 0\}\subset \mathbb{A}^{23}_k,
\]
be the cone over the cubic
\[
W\subset \mathbb{P}^6_{(a_0,b_0,c_0,c_1,c_2,d_0,\xi)}.
\]
The point $[1\!:\!0\!:\!0\!:\!0\!:\!0\!:\!0\!:\!0]\in W$ is a double point, so $W$ is rational and $W(k)$ dense; hence $Z(k)$, and therefore $\mathcal{I}(k)$, are dense. Thus $\mathcal{U}^{\delta}$ is dense.
\end{proof}

The automorphisms of $\Tcal_{4,0,0}$ are of the form
\stepcounter{thm}
\begin{equation}\label{eq:aut800}
\begin{array}{rcl}
x_0&\mapsto&\alpha_{0,0}x_0+\alpha_{0,1}x_1,\quad
x_1\mapsto \alpha_{1,0}x_0+\alpha_{1,1}x_1,\\
y_0&\mapsto&\beta_{0,0}y_0,\\
y_1&\mapsto&\gamma_{1,1}y_1+\gamma_{1,2}y_2+\sum_{i=1}^5 \delta_{1,i}\,x_0^{5-i}x_1^{i-1}y_0,\\
y_2&\mapsto&\gamma_{2,1}y_1+\gamma_{2,2}y_2+\sum_{i=1}^5 \delta_{2,i}\,x_0^{5-i}x_1^{i-1}y_0.
\end{array}
\end{equation}
Set $\Gcal{4}{0}{0}\coloneqq \Aut\!\big(\Tcal_{4,0,0}\big)$.

\begin{Proposition}\label{prop:c2zero}
Assume $k$ is $C_1$. Let $\mathcal{U}\subset \PVV{8}{4}{4}{0}{0}{0}$ be the hyperplane $c_2=0$. Then a general $\mathcal{Q}^1\in\mathcal{U}$ is rational, and
\[
\chi:\ \Gcal{4}{0}{0}\times \mathcal{U}\longrightarrow \PVV{8}{4}{4}{0}{0}{0},\quad
(\varphi,\mathcal{Q}^1)\mapsto \varphi^*\mathcal{Q}^1,
\]
has Zariski dense image.
\end{Proposition}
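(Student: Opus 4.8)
The statement has two parts: first, that a general $\mathcal{Q}^1$ in the hyperplane $\mathcal{U}=\{c_2=0\}$ is rational; second, that the orbit map $\chi$ has dense image, so that the rational locus is dense in the full parameter space $\PVV{8}{4}{4}{0}{0}{0}$.

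For the first part, the plan is to produce a rational section directly. When $c_2=0$ the equation is $\sigma_{0,0}y_0^2+\sigma_{0,1}y_0y_1+\sigma_{0,2}y_0y_2+c_0y_1^2+c_1y_1y_2=0$, and I would look for a section of the form $y_0=0$ together with a choice of $[y_1:y_2]$: setting $y_0=0$ forces $c_0y_1^2+c_1y_1y_2=0$, i.e. $y_1(c_0y_1+c_1y_2)=0$. This gives two honest sections $\{y_0=y_1=0\}$ and $\{y_0=0,\ c_0y_1+c_1y_2=0\}$, both defined over $k$ whenever $c_0,c_1$ are. Hence $\mathcal{Q}^1$ has a $k$-rational section, and since the base $\PP^1$ is $k$-rational, projection from the generic point of the section gives a birational trivialization $\mathcal{Q}^1\sim_{\mathrm{bir}}\PP^1\times\PP^1$. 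Alternatively one can invoke Remark~\ref{not0}: the relation $c_0y_1^2+c_1y_1y_2$ is divisible by $y_1$, so after the linear change $y_1'=y_1$, $y_2'=c_0y_1+c_1y_2$ (valid when $c_1\ne 0$, which holds generically) the coefficient $\sigma_{1,1}'$ of $(y_1')^2$ vanishes, putting us exactly in the situation of Remark~\ref{not0}. Either way rationality on a dense open of $\mathcal{U}$ is immediate.

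For the second part — density of $\chi(\Gcal{4}{0}{0}\times\mathcal{U})$ — the plan mirrors Propositions~\ref{prop:dom433222b} and~\ref{prop:dom442420}: I would fix a sufficiently general $\mathcal{Q}^1\in\mathcal{U}$, restrict the automorphism group~\eqref{eq:aut800} to the affine open $\{\alpha_{0,0}\ne 0,\ \beta_{0,0}\ne 0\}$, write out the $22$ coefficients $f_0,\dots,f_{21}$ of $\varphi^*\mathcal{Q}^1$ as polynomials in the group parameters, and compute the differential of $\chi$ at a well-chosen point (typically $\varphi=\mathrm{id}$, or a nearby point if the identity is degenerate because $c_2=0$ is preserved there). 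Since $\dim\Gcal{4}{0}{0}=3+1+12=16$ and $\dim\mathcal{U}=20$, the source has dimension $36$, comfortably exceeding $\dim\PVV{8}{4}{4}{0}{0}{0}=21$, so a rank computation showing the Jacobian has full rank $21$ at one point suffices for dominance. The key mechanism is that the $\gamma$- and $\delta$-parameters in~\eqref{eq:aut800} mix $y_1,y_2$ and feed $x_0^{5-i}x_1^{i-1}y_0$ terms into $y_1,y_2$, which upon squaring and cross-multiplying generate a nonzero $\sigma_{2,2}'=c_2'$ from $c_0,c_1$ and the $\delta$'s; thus one escapes the hyperplane $c_2=0$, and combined with the $x$-action and rescalings one sweeps out a full-dimensional image.

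The main obstacle I anticipate is the second part, specifically verifying that the differential genuinely has rank $21$: because $\mathcal{U}=\{c_2=0\}$ is itself invariant under the subgroup of automorphisms fixing $y_1,y_2$ (the $\alpha$'s, $\beta_{0,0}$, and the pure $y_1\leftrightarrow y_2$ part), one must check that the extra $\delta_{1,i},\delta_{2,i},\gamma$ directions really do produce a $c_2'\ne 0$ derivative and that, together, all $21$ projective directions are hit — this is the honest linear-algebra content and the place where genericity of the fixed $\mathcal{Q}^1$ (e.g. $c_0,c_1\ne 0$ and $\sigma_{0,1},\sigma_{0,2}$ generic) must be used. As in the earlier propositions this reduces to exhibiting a single point where an explicit $21\times 21$ minor is nonzero, which is a finite computation rather than a conceptual difficulty; once it is in hand, $\overline{\chi(\Gcal{4}{0}{0}\times\mathcal{U})}=\PVV{8}{4}{4}{0}{0}{0}$ follows and the rational locus, containing $\chi(\Gcal{4}{0}{0}\times\mathcal{U}^{\circ})$ for $\mathcal{U}^{\circ}\subset\mathcal{U}$ the dense rational open, is Zariski dense.
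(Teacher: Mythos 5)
Your proposal is correct. For the density statement your argument is the same as the paper's: restrict $\Gcal{4}{0}{0}$ to an affine chart, expand the coefficients of $\varphi^*\mathcal{Q}^1$, and check that the Jacobian at the identity has rank $21$; you also correctly isolate the one point that needs care, namely that the $\gamma_{1,2},\gamma_{2,1},\delta$-directions produce a nonzero derivative of the new $\sigma_{2,2}'$ (indeed $c_2'=c_0\gamma_{1,2}^2+c_1\gamma_{1,2}\gamma_{2,2}$, whose differential at the identity is $c_1\,d\gamma_{1,2}$, nonzero for $c_1\neq 0$). Your only slip there is the dimension count: the chart used in the paper is $\mathbb{A}^{17}$, not $16$ ($4$ $\alpha$'s, $1$ $\beta$, $4$ $\gamma$'s, $10$ $\delta$'s, minus the $2$-dimensional torus acting trivially), but this is immaterial since all that matters is that the rank is $21$. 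For the rationality statement you take a shorter route than the paper: since $\sigma_{2,2}=c_2$ is a constant, the hyperplane $c_2=0$ means $\sigma_{2,2}\equiv 0$, so $\{y_0=y_1=0\}$ (and likewise $\{y_0=0,\ c_0y_1+c_1y_2=0\}$) is a section defined over $k$, and a conic bundle over $\PP^1$ with a $k$-section is $k$-rational --- this is exactly Remark~\ref{not0} and requires no hypothesis on $k$ at all. The paper instead re-derives the same section geometrically, passing to the octic $X_8\subset\PP^3$ with a $6$-fold line and observing that $c_2=0$ forces a general plane section through $q$ to acquire a point of multiplicity $7$, hence to be a rational section by projection from $q$. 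Your version is more elementary and makes explicit that the $C_1$ hypothesis plays no role in this half of the proposition; the paper's version has the virtue of fitting the uniform ``plane-curve model plus Cremona'' framework used throughout Section~\ref{sec:844000}.
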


\begin{proof}
The image of
\(
\phi\colon\Tcal_{4,0,0}\to\mathbb{P}^3,\ (x_0,x_1,y_0,y_1,y_2)\mapsto [x_0:y_1:y_2:1],
\)
is a surface $X_8\subset\mathbb{P}^3$ of degree $8$ with a $6$-fold line $\Lambda_6=\{z_0=z_3=0\}$. At $q=[0,0,1,0]\in \Lambda_6$, $TC_q(X_8)=\{z_3^6=0\}$. 

A general plane $H_q$ through $q$ meets $X_8$ in an irreducible curve $C_8$ with multiplicity $6$ at $q$ and $TC_q(C_8)=\{z_3^6=0\}$. Imposing $c_2=0$ forces a point of multiplicity $7$ at $q$ on a general $C_8$, which is $k$-rational by projection from $q$. 

The conic fibration is induced by the pencil of planes through $\Lambda_6$, hence $C_8$ is a rational section, and Proposition~\ref{Enr} gives rationality. 

For density: fix general $\mathcal{Q}^1\in\mathcal{U}$, restrict to $\{\alpha_{0,0}\ne0,\beta_{0,0}\ne0,\gamma_{1,1}\ne0\}\cong\mathbb{A}^{17}$. The Jacobian of the coefficient map at the identity has rank $21=\dim \PVV{8}{4}{4}{0}{0}{0}$.
\end{proof}

\begin{Proposition}\label{prop:U12}
Assume $k$ is $C_1$. Let $\mathcal{U}^{12}\subset \PVV{8}{4}{4}{0}{0}{0}$ be the codimension $9$ linear subspace defined by
\[
\left\{
\begin{array}{l}
a_{0} = a_{1} - a_{2} + a_{3} - a_{4} + a_{5} - a_{6} + a_{7}, \\[1pt]
a_{1} = a_{5} + \tfrac{1}{2}b_{2} + \tfrac{1}{2}c_{2} + \tfrac{1}{2}d_{0} + \tfrac{1}{2}g_{0} + \tfrac{1}{2}h_{0}, \\[1pt]
a_{2} = -2a_{4} - 3a_{6} - \tfrac{1}{2}b_{2} - \tfrac{1}{2}c_{2} + \tfrac{1}{4}d_{0} + \tfrac{1}{4}g_{0} + \tfrac{1}{4}h_{0}, \\[1pt]
a_{3} = -2a_{5} - \tfrac{1}{2}b_{2} - \tfrac{1}{2}c_{2}, \\[4pt]
a_{7} = 0,\quad a_{8} = 0, \\[1pt]
b_{0} = -g_{0} - 2a_{1} - b_{1} - b_{2} - c_{1} - c_{0} - c_{4} - b_{4} - 2a_{7} - h_{0} - d_{0} - 2a_{5} - c_{3} - b_{3} - 2a_{3} - c_{2}, \\[1pt]
b_{1} = -b_{3} - c_{1} - c_{3} - d_{0} - g_{0} - h_{0}, \\[1pt]
b_{4} = -c_{4}.
\end{array}
\right.
\]
Then a general $\mathcal{Q}^1\in\mathcal{U}^{12}$ is unirational, and
\[
\chi\colon \Gcal{4}{0}{0}\times \mathcal{U}^{12}\to \PVV{8}{4}{4}{0}{0}{0}
\]
has Zariski dense image.
\end{Proposition}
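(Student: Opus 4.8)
I would treat the two assertions, unirationality on $\mathcal{U}^{12}$ and dominance of the orbit map $\chi$, separately, following the pattern of Propositions~\ref{prop:dom433222b}, \ref{prop:dom442420} and \ref{prop:c2zero}. For the first, the plan is to check that a general $\mathcal{Q}^1\in\mathcal{U}^{12}$ satisfies the hypotheses of Proposition~\ref{prop:Me844000} and then invoke it. The only arithmetic input in that proposition is that $c_2\in k^\times$, $c_1^2-4c_0c_2\in k^\times$, and that the leading coefficient
\[
L\;=\;\frac{(-4a_0c_0+b_0^2)c_2+a_0c_1^2-b_0c_1d_0+d_0^2c_0}{c_2}
\]
of $-\tfrac4{c_2}\delta_{\mathcal{Q}^1}$ is a nonzero square in $k$ (equivalently $\xi^2=L^{-1}\in k^{*2}$, in the notation used there). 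The nine affine-linear relations defining $\mathcal{U}^{12}$ express nine of the coefficients in terms of the rest; substituting them into the numerator of $L$ one verifies, this being exactly how $\mathcal{U}^{12}$ was singled out, so that $\mathcal{U}^{12}\subseteq\mathcal{U}^{\delta}$ in the notation of Lemma~\ref{lem:d8}, that $L$ becomes the square of an explicit polynomial in the surviving free parameters, hence lies in $k^{*2}$ for every point of $\mathcal{U}^{12}$ away from the proper closed locus where that polynomial vanishes. The conditions $c_2\neq0$ and $c_1^2-4c_0c_2\neq0$ are open and are not identically violated on $\mathcal{U}^{12}$, since $c_2$ and $c_0,c_1$ remain free, so they hold for a general member as well. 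Proposition~\ref{prop:Me844000} (with $\operatorname{char} k\neq 2$ and the perfectness hypothesis used there) then gives unirationality.

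For the density of the image of $\chi$, observe first that every $\varphi\in\Gcal{4}{0}{0}$ is an automorphism of the ambient toric variety $\Tcal_{4,0,0}$, so it restricts to an isomorphism $\varphi^*\mathcal{Q}^1\xrightarrow{\ \sim\ }\mathcal{Q}^1$; by the previous paragraph every point of $\chi(\Gcal{4}{0}{0}\times\mathcal{U}^{12})$ therefore represents a unirational conic bundle, and it is enough to show that $\chi$ is dominant. I would establish this by a single maximal-rank check, exactly as in Proposition~\ref{prop:c2zero}: fix a general $\mathcal{Q}^1_0\in\mathcal{U}^{12}$, take $\varphi=\operatorname{id}$, restrict $\Gcal{4}{0}{0}$ to the chart $\{\alpha_{0,0}\neq0,\ \beta_{0,0}\neq0,\ \gamma_{1,1}\neq0\}\cong\mathbb{A}^{17}$ and dehomogenize $\PVV{8}{4}{4}{0}{0}{0}$, so that $\chi$ becomes a polynomial map $\mathbb{A}^{17}\times\mathbb{A}^{12}\to\mathbb{A}^{21}$ with components the normalized coefficients $f_0/f_{21},\dots,f_{20}/f_{21}$ of $\varphi^*\mathcal{Q}^1_0$. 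Since the source has dimension $29\geq 21=\dim\PVV{8}{4}{4}{0}{0}{0}$, it suffices to verify that the $21\times29$ Jacobian at $(\operatorname{id},\mathcal{Q}^1_0)$ has rank $21$; this is a finite computation, most safely carried out by specializing the free parameters to random values over a large finite field and reading off the rank there, which then lifts to the generic point. Granting it, $\chi$ is dominant, so $\overline{\chi(\Gcal{4}{0}{0}\times\mathcal{U}^{12})}=\PVV{8}{4}{4}{0}{0}{0}$ and the image is Zariski dense.

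The unirationality part is essentially bookkeeping once the defining relations of $\mathcal{U}^{12}$ are written out: a substitution into $L$ plus two non-vanishing checks. The step I expect to be the real obstacle is the dominance of $\chi$. Because $\mathcal{U}^{12}$ has codimension $9$, proving $\chi$ dominant amounts to showing that the $\Gcal{4}{0}{0}$-orbit of a general member of $\mathcal{U}^{12}$ leaves $\mathcal{U}^{12}$ transversally in all nine missing directions simultaneously, equivalently that the tangent directions coming from the automorphism action together with the $12$ directions tangent to $\mathcal{U}^{12}$ span the full $21$-dimensional tangent space of $\PVV{8}{4}{4}{0}{0}{0}$. This could in principle fail (it would, were the linear forms cutting out $\mathcal{U}^{12}$ semi-invariant under $\Gcal{4}{0}{0}$), and excluding it is precisely the content of the explicit Jacobian rank computation; this is the place where care, and computer algebra, will be needed.
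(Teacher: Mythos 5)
The dominance half of your argument is exactly the paper's (Jacobian of the orbit map at the identity, rank $21$ on a suitable chart), and that part is fine. The unirationality half, however, has a genuine gap. You propose to feed a general member of $\mathcal{U}^{12}$ into Proposition~\ref{prop:Me844000}, and the whole weight of the argument then rests on the claim that the defining relations of $\mathcal{U}^{12}$ force the leading coefficient $L$ of $-\tfrac{4}{\sigma_{2,2}}\delta_{\mathcal{Q}^1}$ to be a square in $k$, ``this being exactly how $\mathcal{U}^{12}$ was singled out.'' That premise is not correct: $\mathcal{U}^{12}$ is \emph{not} carved out by a square condition on the discriminant. It is cut out by the conditions that the fixed plane $H=\{z_1=z_2\}\subset\PP^3$ be tangent to the image octic surface $X_8$ at the three prescribed points $p_1,p_2,p_3$. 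Nothing in those tangency relations makes $L$ restrict to the square of a polynomial on $\mathcal{U}^{12}$, and being a square \emph{in $k$} is not a Zariski-open condition one can verify ``for a general point'' of a linear subspace --- you would need $L|_{\mathcal{U}^{12}}$ to be an identical square (or a square times a controlled factor), which you neither compute nor have any reason to expect. The square-in-$k$ locus is the separate, non-linear, merely dense set $\mathcal{U}^{\delta}$ of Lemma~\ref{lem:d8}, which the paper keeps distinct from $\mathcal{U}^{12}$. A further warning sign: your route never uses the $C_1$ hypothesis, which is stated in the proposition and is essential in the actual proof.

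The paper's argument is entirely geometric: on $\mathcal{U}^{12}$ the plane section $C=X_8\cap H$ is an octic with a point of multiplicity $6$ on the $6$-fold line and ordinary double points at $p_1,p_2,p_3$; three explicit quadratic Cremona transformations reduce $C$ to a smooth conic $C_3$, which has a $k$-point because $k$ is $C_1$; hence $C$ is a $k$-rational $2$-section and Proposition~\ref{Enr} gives unirationality. To repair your proof you would either have to reproduce this multisection construction, or actually verify (it is false in general) that $L$ is a square on $\mathcal{U}^{12}$. Finally, be careful with notation: the coefficients in Proposition~\ref{prop:Me844000} follow \eqref{eq:pol844000} (where $\sigma_{2,2}=c_2$), while in Proposition~\ref{prop:U12} the letters $c_i$ name coefficients of $\sigma_{0,2}$ and $\sigma_{2,2}=h_0$; your non-vanishing checks ``$c_2\neq 0$, $c_1^2-4c_0c_2\neq 0$'' must be translated to ``$h_0\neq 0$, $g_0^2-4d_0h_0\neq 0$'' before being tested against the relations defining $\mathcal{U}^{12}$.
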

\begin{proof}
Consider the rational map
\[
\iota\colon \Tcal_{4,0,0}\dashrightarrow \PP^3_{[z_0:z_1:z_2:w]},\qquad
(x_0,x_1,y_0,y_1,y_2)\longmapsto [\,z_0:z_1:z_2:w\,]=[\,x_0:y_1:y_2:1\,].
\]
The image $X_8\coloneqq \overline{\iota(\mathcal{Q}^1)}\subset\PP^3$ is a degree-$8$ surface having the $6$–fold line
\[
\Lambda_6=\{z_0=w=0\}.
\]
Consider the plane
\[
H\coloneqq \{z_1-z_2=0\}\subset\PP^3.
\]
A direct substitution using the relations defining $\Ucal^{12}$ shows that $H$ is tangent to $X_8$ at the three points
\[
p_1=[1:0:0:-1],\qquad p_2=[0:0:0:1],\qquad p_3=[1:1:1:1],
\]
and that the intersection
\[
C\coloneqq X_8\cap H
\]
is a plane octic with ordinary double points at $p_1,p_2,p_3$ and a point of multiplicity $6$ at
\[
q=H\cap\Lambda_6=[0:1:1:0].
\]
Moreover, the tangent cone is given by $TC_q(C)=\{w^6=0\}$ inside the plane $H=\{z_1=z_2\}$, and $C$ is transverse to the conic fibration induced by the pencil of planes through $\Lambda_6$.

Working in the plane $H$ we keep the ambient coordinates $[z_0:z_1:z_2:w]$ and impose $z_1=z_2$. Then $C$ is cut by $z_1-z_2=0$ and by the homogeneous octic
\[
\begin{aligned}
&z_0^8 + \frac{a_5 + \tfrac{1}{2}b_2 + \tfrac{1}{2}c_2 + \tfrac{1}{2}d_0 + \tfrac{1}{2}g_0 + \tfrac{1}{2}h_0}{\Delta}\,z_0^7w
+ \frac{-2a_4 - 3a_6 - \tfrac{1}{2}b_2 - \tfrac{1}{2}c_2 + \tfrac{1}{4}d_0 + \tfrac{1}{4}g_0 + \tfrac{1}{4}h_0}{\Delta}\,z_0^6w^2 \\
&\quad + \frac{-2a_5 - \tfrac{1}{2}b_2 - \tfrac{1}{2}c_2}{\Delta}\,z_0^5w^3
+ \frac{-b_2 - c_0 - c_2 - d_0 - g_0 - h_0}{\Delta}\,z_0^4 z_1 w^3
+ \frac{c_0}{\Delta}\,z_0^4 z_2 w^3
+ \frac{a_4}{\Delta}\,z_0^4 w^4 \\
&\quad + \frac{-c_1 - c_3 - d_0 - g_0 - h_0}{\Delta}\,z_0^3 z_1 w^4
+ \frac{c_1 + c_3}{\Delta}\,z_0^3 z_2 w^4
+ \frac{a_5}{\Delta}\,z_0^3 w^5
+ \frac{b_2}{\Delta}\,z_0^2 z_1 w^5
+ \frac{c_2}{\Delta}\,z_0^2 z_2 w^5 \\
&\quad + \frac{a_6}{\Delta}\,z_0^2 w^6
+ \frac{d_0}{\Delta}\,z_1^2 w^6
+ \frac{g_0}{\Delta}\,z_1 z_2 w^6
+ \frac{h_0}{\Delta}\,z_2^2 w^6
- \frac{c_4}{\Delta}\,z_1 w^7
+ \frac{c_4}{\Delta}\,z_2 w^7,
\end{aligned}
\]
where for brevity we set
\[
\Delta\coloneqq a_4 + 2a_6 + \tfrac{1}{2}b_2 + \tfrac{1}{2}c_2 + \tfrac{1}{4}d_0 + \tfrac{1}{4}g_0 + \tfrac{1}{4}h_0.
\]
Identify $H\cong\PP^2$ via the linear map
\[
j:\ \PP^2_{[w_0:w_1:w_2]}\longrightarrow H\subset\PP^3,\qquad
[w_0:w_1:w_2]\longmapsto [\,z_0:z_1:z_2:w\,]=[\,w_0:w_1:w_1:w_2\,].
\]
Under $j$, the points $c=[0:1:0]$, $n_1=[0:0:1]$, and $n_3=[1:0:-1]$ of $\PP^2$ map to $p_3,p_2,p_1$ respectively. Consider the quadratic Cremona transformation on $\PP^2$ with base points
\[
B_1=\{c,n_1,n_3\},\qquad \phi_1:\PP^2\dashrightarrow\PP^2,
\]
given by the complete linear system of conics through $B_1$. Applying $\phi_1$ to $C$ (viewed in $\PP^2$ via $j^{-1}$) we obtain a degree $6$ plane curve $C_1=\phi_1(C)$ with explicit equation
\[
\begin{aligned}
& w_0^2 w_1^4
+ \frac{-2a_4 + a_5 - 4a_6 - \tfrac{1}{2}b_2 - \tfrac{1}{2}c_2}{\Delta}\,w_0^2 w_1^3 w_2
+ \frac{a_4 - 2a_5 + 3a_6}{\Delta}\,w_0^2 w_1^2 w_2^2
+ \frac{a_5 - 2a_6}{\Delta}\,w_0^2 w_1 w_2^3 \\
&\quad + \frac{-b_2 - c_2 - d_0 - g_0 - h_0}{\Delta}\,w_0 w_1^2 w_2^3
+ \frac{a_6}{\Delta}\,w_0^2 w_2^4
+ \frac{b_2 + c_2}{\Delta}\,w_0 w_1 w_2^4
+ \frac{d_0 + g_0 + h_0}{\Delta}\,w_2^6.
\end{aligned}
\]
Next consider the quadratic Cremona transformation
\[
\phi_2:\PP^2\dashrightarrow\PP^2
\]
with base points
\[
B_2=\{q_1,q_2,q_3\}=\{[2:1:1],\ [0:1:0],\ [1:0:0]\},
\]
again given by the complete linear system of conics through $B_2$. Applying $\phi_2$ to $C_1$ yields a quartic $C_2=\phi_2(C_1)$ with equation
\[
\begin{aligned}
& w_0^4
+ \frac{-2a_4 + a_5 - 4a_6 - \tfrac{1}{2}b_2 - \tfrac{1}{2}c_2}{\Delta}\,w_0^3 w_1
+ \frac{a_4 - 2a_5 + 3a_6}{\Delta}\,w_0^2 w_1^2
+ \frac{a_5 - 2a_6}{\Delta}\,w_0 w_1^3
+ \frac{a_6}{\Delta}\,w_1^4 \\
&\quad - 4 w_0^3 w_2
+ \frac{8a_4 - 2a_5 + 16a_6 + 3b_2 + 3c_2 + d_0 + g_0 + h_0}{\Delta}\,w_0^2 w_1 w_2
\\
&\quad + \frac{-4a_4 + 4a_5 - 8a_6 - b_2 - c_2}{\Delta}\,w_0 w_1^2 w_2  - \frac{2a_5}{\Delta}\,w_1^3 w_2
+ 4 w_0^2 w_2^2 - 8 w_0 w_1 w_2^2 + 4 w_1^2 w_2^2.
\end{aligned}
\]
Finally, apply the explicit quadratic map
\[
\phi_3:\PP^2\dashrightarrow\PP^2,\qquad
[w_0:w_1:w_2]\longmapsto\big[-2(w_0-w_1)w_2+w_0^2,\ \ w_1^2,\ \ w_0w_1\big].
\]
This is a birational quadratic map with (infinitely near) base supported on the line $\{w_2=0\}$ at the coordinate points; it preserves the rationality class and simplifies the quartic equation. The image $C_3=\phi_3(C_2)$ is the conic with equation
\[
\begin{aligned}
& w_0^2
- \frac{a_5}{\Delta}\,w_0 w_1
+ \frac{a_6}{\Delta}\,w_1^2
+ \frac{-2a_4 + a_5 - 4a_6 - \tfrac{1}{2}b_2 - \tfrac{1}{2}c_2}{\Delta}\,w_0 w_2
+ \frac{a_5 - 2a_6}{\Delta}\,w_1 w_2
+ \frac{a_4 - a_5 + 3a_6}{\Delta}\,w_2^2.
\end{aligned}
\]
Since $k$ is $C_1$, every smooth conic over $k$ has a $k$-point; hence $C_3$ is $k$-rational. Being obtained from $C$ by a sequence of quadratic Cremona transformations, $C$ is $k$-rational as well.

The curve $C\subset H$ is a rational $2$–section of the conic fibration $X_8\dashrightarrow\PP^1$ induced by the planes through $\Lambda_6$. Therefore, by Proposition~\ref{Enr}, the original conic bundle $\mathcal{Q}^1$ is unirational.

For the dominance statement, view $\Gcal{4}{0}{0}=\Aut(\Tcal_{4,0,0})$ as a Zariski open of an affine space via \eqref{eq:aut800}. Fix a general $\mathcal{Q}^1\in\Ucal^{12}$ and consider the coefficient map
\[
\chi\colon \Gcal{4}{0}{0}\times \Ucal^{12}\to \PVV{8}{4}{4}{0}{0}{0},\qquad (\varphi,\mathcal{Q}^1)\mapsto \varphi^*\mathcal{Q}^1.
\]
Restricting to the open where the linear parts are invertible, the differential of $\chi$ at the identity has full rank $21=\dim\PVV{8}{4}{4}{0}{0}{0}$. Hence $\chi$ is dominant and its image is Zariski dense.
\end{proof}

\begin{Proposition}\label{prop:min844}
Assume that $k$ is not algebraically closed. Then a general conic bundle $\mathcal{Q}^1\in\mathcal{U}^{\delta}$ is minimal.
\end{Proposition}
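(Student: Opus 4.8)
The plan is to verify directly the two conditions in Definition~\ref{ext_min}: for a general $\mathcal{Q}^{1}\in\mathcal{U}^{\delta}\subset\PVV{8}{4}{4}{0}{0}{0}$ I will show that $\rho(\mathcal{Q}^{1}/\PP^{1})=1$ (extremality) and that $\mathcal{Q}^{1}$ carries no rational section, arguing along the lines of Propositions~\ref{prop:nosec433222} and~\ref{prop:min433222}. Since $\mathcal{U}^{\delta}$ is Zariski dense by Lemma~\ref{lem:d8}, this exhibits a dense locus of minimal conic bundles, and, combined with Proposition~\ref{prop:Me844000}, a dense locus of minimal \emph{unirational} ones.

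\textbf{Extremality.} The defining relation of $\mathcal{U}^{\delta}$ constrains only the leading coefficient of $-\tfrac{4}{c_{2}}\delta_{\mathcal{Q}^{1}}$, so a general $\mathcal{Q}^{1}\in\mathcal{U}^{\delta}$ still has $\delta_{\mathcal{Q}^{1}}$ irreducible of degree $8$ over $k$; moreover, for such $\mathcal{Q}^{1}$ the Galois group of $\delta_{\mathcal{Q}^{1}}$ is $S_{8}$, so the degree-$8$ field $k(\mathfrak{p})=k[t]/(\delta_{\mathcal{Q}^{1}})$ attached to the closed point $\mathfrak{p}=\{\delta_{\mathcal{Q}^{1}}=0\}\subset\PP^{1}$ contains no quadratic subextension of $k$. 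In particular the eight nodes of the singular fibres form a single Galois orbit, no singular fibre lies over a $k$-rational point, and — as $k\neq\overline{k}$ and $\mathcal{Q}^{1}$ is general — in no reducible geometric fibre are the two line components defined over the residue field. Hence every Galois-invariant class in $\Pic(\overline{\mathcal{Q}^{1}})$ modulo the class of a fibre is a multiple of it, i.e.\ $\rho(\mathcal{Q}^{1}/\PP^{1})=1$ and $\mathcal{Q}^{1}$ is extremal.

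\textbf{Absence of a section.} For a general $\mathcal{Q}^{1}\in\mathcal{U}^{\delta}$ one has $c_{2}\in k^{\times}$ and $c_{1}^{2}-4c_{0}c_{2}\in k^{\times}$, so Proposition~\ref{diag} applies with $\alpha_{0}=\sigma_{2,2}=c_{2}$, $\alpha_{1}=\sigma_{1,2}=c_{1}$, $\alpha_{3}=\sigma_{1,1}=c_{0}$, and, exactly as in the proof of Proposition~\ref{prop:Me844000}, it puts the generic fibre of $\pi$ over $k(t)$ in the diagonal form
\[
P\,z_{2}^{2}+e\,z_{0}^{2}-z_{1}^{2}=0,\qquad P=-\tfrac{4}{c_{2}}\,\delta_{\mathcal{Q}^{1}}\ (\deg P=8),\qquad e=\tfrac{1}{c_{1}^{2}-4c_{0}c_{2}}\in k^{\times},
\]
with associated Brauer class $(P,e)\in\Br(k(t))[2]$. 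At the point $\mathfrak{p}=\{\delta_{\mathcal{Q}^{1}}=0\}$ one has $v_{\mathfrak{p}}(P)=1$ (as $\delta_{\mathcal{Q}^{1}}$ is squarefree) and $v_{\mathfrak{p}}(e)=0$, so the residue formula gives
\[
\partial^{2}_{\mathfrak{p}}(P,e)=\overline{e^{-1}}\in k(\mathfrak{p})^{\times}/k(\mathfrak{p})^{\times 2},
\]
the image of the constant $e$. For $\mathcal{Q}^{1}$ general the element $c_{1}^{2}-4c_{0}c_{2}$ is a non-square in $k$ — this is where the hypothesis that $k$ be non-algebraically-closed (of characteristic $\neq 2$) is used, so that $k$ admits a quadratic extension — and since $k(\mathfrak{p})$ contains no quadratic subextension of $k$, the class $\overline{e^{-1}}$ is nontrivial. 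Therefore $(P,e)\neq 0$ in $\Br(k(t))$, the generic fibre has no $k(t)$-rational point, and $\mathcal{Q}^{1}$ has no rational section. Together with extremality this proves that a general $\mathcal{Q}^{1}\in\mathcal{U}^{\delta}$ is minimal, hence in particular not $k$-rational.

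\textbf{Main obstacle.} The delicate point is ensuring that the residue computation is genuinely generic on $\mathcal{U}^{\delta}$: one has to check that imposing the square condition on the leading coefficient of $-\tfrac{4}{c_{2}}\delta_{\mathcal{Q}^{1}}$ — which touches only the top coefficient — neither forces $\delta_{\mathcal{Q}^{1}}$ to become reducible nor makes $k(\mathfrak{p})$ acquire a quadratic subfield, and that $c_{1}^{2}-4c_{0}c_{2}$ is a non-square in $k$ for a Zariski-dense set of parameters. The latter is exactly where "$k$ not algebraically closed'' (in fact "not quadratically closed'') must be invoked: over a field in which every element is a square the class $(P,e)$ would vanish, the bundle would acquire a section, and the statement would fail. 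The remaining point — that no nontrivial Galois-invariant combination of the exceptional components of the singular fibres survives, giving $\rho(\mathcal{Q}^{1}/\PP^{1})=1$ — is routine once the $S_{8}$-orbit on the eight nodes is transitive and the two components of each nodal fibre are not individually defined over the residue field.
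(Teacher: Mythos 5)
Your overall strategy (extremality via irreducibility of the fibres over $k$, absence of a section via a nonvanishing residue of the Brauer class of the diagonalized generic fibre) is the same as the paper's, but the execution of the residue step has a genuine gap. You compute the residue at the degree-$8$ closed point $\mathfrak{p}=\{\delta_{\mathcal{Q}^1}=0\}$ and argue that $\overline{e^{-1}}$ is nontrivial in $k(\mathfrak{p})^{\times}/k(\mathfrak{p})^{\times 2}$ because, for general parameters, $\delta_{\mathcal{Q}^1}$ is irreducible with Galois group $S_8$, so $k(\mathfrak{p})$ admits no quadratic subextension of $k$. This claim is not available over an arbitrary non-algebraically-closed field: over $\mathbb{R}$ there is no irreducible polynomial of degree $8$ at all, and over a $p$-adic field the absolute Galois group is prosolvable, so \emph{no} member of the family has discriminant with Galois group $S_8$. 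More generally, ``the Galois group of $\delta_{\mathcal{Q}^1}$ is $S_8$'' and ``$e$ remains a nonsquare in $k[t]/(\delta_{\mathcal{Q}^1})$'' are not Zariski conditions on the parameters, and you give no argument that the locus where they hold is nonempty, let alone dense, for the fields allowed by the hypothesis. The same objection applies to your extremality argument insofar as it leans on the $S_8$-orbit structure (although the weaker statement you actually need there --- that the two components of each reducible fibre are conjugate --- can be salvaged without it).

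The paper sidesteps exactly this difficulty by specializing to the diagonal subfamily $\{P(t)y_0^2+cy_1^2-y_2^2=0\}$ with $P(t)=tQ(t)$, $Q(0)\neq 0$ and $c\notin k^{\times 2}$: the residue is then taken at the $k$-rational simple zero $t=0$ of the discriminant, so it lives in $k^{\times}/k^{\times 2}$ and equals $\overline{1/c}\neq 0$, requiring only the existence of a single nonsquare in $k$ and no control whatsoever over degree-$8$ residue fields; minimality of the general member is then inherited via the specialization argument of Lemma~\ref{sect} as in Proposition~\ref{prop:nosec433222}. If you want to keep your computation at a nonrational point of the discriminant, you must either restrict to fields where the required Galois-theoretic genericity can be certified (e.g.\ Hilbertian fields), or, better, imitate the paper and force a residue at a point with residue field $k$ (or a controlled quadratic extension). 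As written, the proof does not establish the statement in the generality claimed.
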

\begin{proof}
Consider
\(
\mathcal{Q}^1=\{P(t)y_0^2+cy_1^2-y_2^2=0\}\subset \mathbb{A}^1\times\mathbb{P}^2,
\)
with $P(t)=tQ(t)$, $Q(0)\ne0$, and $c\notin k^{*2}$. The fiber over $t=0$ is geometrically reducible ($cy_1^2-y_2^2 = 0$) but its components are not defined over $k$; for general $Q$, the remaining seven singular fibers behave similarly. Hence $\rho(\mathcal{Q}^1/W)=1$. With $a=P(t)$, $b=c$ in Section \ref{Bra}, we have $v_t(a)=1$, so
\(
\partial_t^2(a,b)=\overline{1/c}\ne 0
\)
(as $c$ is not a square), showing there are no sections. Thus $\mathcal{Q}^1$ is minimal.
\end{proof}

Even though Proposition~\ref{prop:c2zero} gives a Zariski dense set of rational bundles in $\PVV{8}{4}{4}{0}{0}{0}$, there is also a Zariski dense set parameterizing non rational bundles.

\begin{Lemma}\label{lem:nonrationalCB844}
Assume that $k$ is perfect. A general conic bundle in $\mathcal{Q}^1\in\mathcal{U}^{12}$ is non rational.
\end{Lemma}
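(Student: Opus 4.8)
The strategy is to detect non-rationality by producing a nontrivial $2$–torsion Brauer class in $\Br(k(t))[2]$ via a residue computation, exactly as in Propositions~\ref{prop:nosec433222} and \ref{prop:min844}. Since a smooth conic bundle $\mathcal{Q}^1\to\PP^1$ that is $k$–rational must have a rational section, and a conic bundle with nontrivial associated Brauer class has no rational section, it suffices to exhibit, for a general $\mathcal{Q}^1\in\Ucal^{12}$, a closed point $p\in\PP^1$ at which the second residue $\partial^2_p$ of the Brauer class does not vanish. Concretely, I would first put the generic fiber in diagonal form over $k(t)$ using Proposition~\ref{diag} (taking $\alpha_0=\sigma_{2,2}=c_2$, $\alpha_1=\sigma_{1,2}=c_1$, $\alpha_3=\sigma_{1,1}=c_0$, which are constants, so the relevant nondegeneracy conditions $c_2\ne 0$, $c_1^2-4c_0c_2\ne 0$ hold for general parameters in $\Ucal^{12}$), obtaining $\mathcal{Q}'=\{a(t)z_0^2+b(t)z_1^2-z_2^2=0\}$ with $b(t)=-\tfrac{4}{c_2}\,\delta_{\mathcal{Q}^1}(t)$ up to squares and $a(t)\in k^{\times}$ up to squares.

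Next I would analyze the valuations of $a$ and $b$ at a suitable point $p$ and compute $\partial^2_p(a,b)$ using the formula $\partial^2_D(a,b)=(-1)^{v_D(a)v_D(b)}\,\overline{a^{v_D(b)}/b^{v_D(a)}}$ from Subsection~\ref{Bra}. The cleanest choice is a point $p=\{t=t_0\}$ where the discriminant $\delta_{\mathcal{Q}^1}$ has a simple zero: there $v_p(b)=1$ and $v_p(a)=0$, so $\partial^2_p(a,b)=\overline{a(t_0)}\in k(p)^{\times}/k(p)^{\times 2}$, and the claim reduces to showing that the residue of $a$ at $t_0$ is not a square in the residue field $k(p)$. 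For a general member of $\Ucal^{12}$ the discriminant polynomial $\delta_{\mathcal{Q}^1}(t)$ is squarefree of degree $8$ and irreducible over $k$ (this should be checked by exhibiting one explicit member, or by a dimension count showing reducibility is a proper closed condition), so any root $t_0$ generates a degree-$8$ extension $k(p)/k$; then one must verify that the value $a(t_0)$ — equivalently the class of $a$ in $k(p)^{\times}/k(p)^{\times 2}$ — is nontrivial for general parameters. Since $k$ is not algebraically closed and perfect, a single explicit witness over, say, $k=\QQ$ or $\F_3(s)$ suffices to conclude that the locus where the residue is a nonsquare is dense, and then a specialization argument (Lemma~\ref{sect} controls sections, but here we argue directly on the Brauer class, whose nonvanishing is an open condition on the parameter space once we fix the combinatorial type of the valuations) propagates non-rationality to the general point of $\Ucal^{12}$.

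The main obstacle I anticipate is arithmetic rather than geometric: after diagonalization the polynomial $\delta_{\mathcal{Q}^1}(t)$ and the function $a(t)$ become complicated rational expressions in the twelve free parameters defining $\Ucal^{12}$, and one must argue that generically $\delta_{\mathcal{Q}^1}$ is separable with an irreducible factor of even degree $\ge 1$ and that $a$ has nonsquare residue there. Rather than a general symbolic computation, the honest route is to produce one fully explicit conic bundle in $\Ucal^{12}$ over the prime field (of characteristic $\ne 2$) for which the residue is visibly a nonsquare — mirroring Remark~\ref{rmk:example433222} — and then invoke openness of the nonvanishing-residue condition together with density of $\Ucal^{12}$ in $\PVV{8}{4}{4}{0}{0}{0}$ to obtain non-rationality on a Zariski dense subset. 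A secondary subtlety is ensuring the chosen general member is actually \emph{smooth} and that its conic bundle structure is the minimal model, so that ``non-rational'' is not vacuously obtained from singularities; this follows by the same genericity arguments used in Propositions~\ref{prop:min844} and \ref{prop:min433222}.
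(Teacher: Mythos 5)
Your proposal follows essentially the same route as the paper: diagonalize the fiberwise quadratic form via Proposition~\ref{diag} using the constant coefficients $c_0,c_1,c_2$, observe that the resulting Brauer class $(a,b)$ has residue equal to the constant class of $c_1^2-4c_0c_2$ at points of the discriminant, and conclude non-rationality since this is not a square for general parameters. The paper's own proof is in fact terser than yours --- it does not address the subtlety you correctly flag, namely that the residue lives in $k(p)^{\times}/k(p)^{\times 2}$ for a possibly nontrivial extension $k(p)/k$ rather than in $k$ itself --- so your extra care (an explicit witness plus openness of the nonvanishing-residue condition) only strengthens the argument.
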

\begin{proof}
Compute the Brauer residues as in Section \ref{Bra}. For the conic $ax^2+by^2=z^2$ with $a,b\in k(\mathbb{P}^1)^*$, residues may be nonzero only at zeros/poles of $a$ and $b$. For a general element of $\PVV{8}{4}{4}{0}{0}{0}$ we have
$$
\begin{array}{ll}
a(t) = & \frac{4c_2}{(4c_0c_2 - c_1^2)P(t)}, \\ 
b(t) = & \frac{4c_2}{P(t)},
\end{array} 
$$
where $P(t)$ is a polynomial. Since for $c_0,c_1,c_2$ general $4c_0c_2 - c_1^2$ is not a square in $k$ we conclude that a general conic bundle in $\PVV{8}{4}{4}{0}{0}{0}$ and also in $\mathcal{U}^{12}$ is non rational. 
\end{proof}

\begin{Proposition}\label{prop:844NotRational}
If $k$ is perfect, there is a Zariski dense subset of $\PVV{8}{4}{4}{0}{0}{0}$ parametrizing non rational conic bundles.
\end{Proposition}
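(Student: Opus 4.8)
The statement is essentially a corollary of Lemma~\ref{lem:nonrationalCB844} and the dominance recorded in Proposition~\ref{prop:U12}, so the plan is to assemble these two inputs by a transport-of-structure argument along the $\Gcal{4}{0}{0}$-action, exactly in the spirit of the density arguments used for the other families (cf. the passage from $\mathcal{U}$ to $\PVV{8}{4}{4}{0}{0}{0}$ in Proposition~\ref{prop:c2zero}).

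First I would record the relevant invariance. An element $\varphi\in\Gcal{4}{0}{0}=\Aut(\Tcal_{4,0,0})$ is an automorphism of the ambient toric variety defined over $k$, and it restricts to a $k$-isomorphism between the conic bundle $\varphi^{*}\mathcal{Q}^1$ and $\mathcal{Q}^1$. In particular $\varphi^{*}\mathcal{Q}^1$ is $k$-rational if and only if $\mathcal{Q}^1$ is, so $k$-non-rationality is constant along $\Gcal{4}{0}{0}$-orbits. Next, let $\mathcal{N}\subset\mathcal{U}^{12}$ be the locus of non rational conic bundles. Since $\mathcal{U}^{12}$ is a linear subspace it is irreducible, and Lemma~\ref{lem:nonrationalCB844} says that its generic member is non rational; hence $\mathcal{N}$ contains a nonempty Zariski open subset of $\mathcal{U}^{12}$ and is in particular dense in $\mathcal{U}^{12}$. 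Therefore $\Gcal{4}{0}{0}\times\mathcal{N}$ is dense in $\Gcal{4}{0}{0}\times\mathcal{U}^{12}$.

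Now I invoke Proposition~\ref{prop:U12}: the morphism $\chi\colon\Gcal{4}{0}{0}\times\mathcal{U}^{12}\to\PVV{8}{4}{4}{0}{0}{0}$ is dominant. For any morphism and any dense subset $D$ of its source one has $\overline{\chi(D)}\supseteq\overline{\chi(\overline{D})}=\overline{\chi(\Gcal{4}{0}{0}\times\mathcal{U}^{12})}=\PVV{8}{4}{4}{0}{0}{0}$ by dominance; applying this with $D=\Gcal{4}{0}{0}\times\mathcal{N}$ shows that $\chi(\Gcal{4}{0}{0}\times\mathcal{N})$ is Zariski dense. By the invariance noted above, every point of $\chi(\Gcal{4}{0}{0}\times\mathcal{N})$ represents a conic bundle $\varphi^{*}\mathcal{Q}^1$ with $\mathcal{Q}^1\in\mathcal{N}$ non rational, hence is itself non rational. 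Thus $\chi(\Gcal{4}{0}{0}\times\mathcal{N})$ is the desired Zariski dense subset of $\PVV{8}{4}{4}{0}{0}{0}$ parametrizing non rational conic bundles, which together with Proposition~\ref{prop:c2zero} exhibits the coexistence of dense rational and dense non rational loci.

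There is no real obstacle: the only points requiring (routine) care are that $k$-(non)rationality is genuinely preserved by $\Gcal{4}{0}{0}$ — immediate, since these are $k$-automorphisms of the ambient space — and the elementary topological fact that a dominant morphism sends dense subsets to dense subsets. One could also shortcut the $\chi$-transport: the residue computation carried out in the proof of Lemma~\ref{lem:nonrationalCB844} already produces a nontrivial $2$-torsion Brauer class (the class of $4c_0c_2-c_1^2$) for a general member of the full space $\PVV{8}{4}{4}{0}{0}{0}$, so the non rational locus contains a nonempty open subset directly; I would nonetheless prefer the argument above since it uses only the stated form of Lemma~\ref{lem:nonrationalCB844}.
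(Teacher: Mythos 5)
Your proposal is correct and follows the same route as the paper: the paper's proof is precisely ``combine the dominance of $\chi$ on $\Gcal{4}{0}{0}\times\mathcal{U}^{12}$ (the Jacobian rank argument of Proposition~\ref{prop:U12}) with Lemma~\ref{lem:nonrationalCB844}'', and you have simply filled in the routine details --- invariance of $k$-(non)rationality along $\Gcal{4}{0}{0}$-orbits and the fact that a dominant morphism carries a dense subset of its source to a dense subset of its target. Your closing observation that the residue $4c_0c_2-c_1^2$ already detects non-rationality on a dense open subset of the whole of $\PVV{8}{4}{4}{0}{0}{0}$ is also consistent with the proof of Lemma~\ref{lem:nonrationalCB844} as written.
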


\begin{proof}
As before, the Jacobian rank argument shows that the image of
\(
\Gcal{4}{0}{0}\times \mathcal{U}^{12} \subset \PVV{8}{4}{4}{0}{0}{0}
\) under $\chi$
is dense; combine with Lemma~\ref{lem:nonrationalCB844}.
\end{proof}

\begin{thm}\label{thm:main844}
Assume $k$ is a perfect, not algebraically closed $C_1$ field. Then there exist a dense subset $\mathcal{D}_{8,4,4,0,0,0}:=\chi(\Gcal{4}{0}{0} \times \mathcal{U}^{12}) \subset \PVV{8}{4}{4}{0}{0}{0}$ parametrizing minimal unirational but not rational conic bundles.
\end{thm}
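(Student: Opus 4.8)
The plan is to assemble the three properties from results already proved in this section and then check that each survives both the passage to a dense open sublocus of $\mathcal{U}^{12}$ and the pullback action $\chi$ of $\Gcal{4}{0}{0}$. Density of $\mathcal{D}_{8,4,4,0,0,0}=\chi(\Gcal{4}{0}{0}\times\mathcal{U}^{12})$ in $\PVV{8}{4}{4}{0}{0}{0}$ is exactly the dominance assertion of Proposition~\ref{prop:U12}, proved there by producing a point where the differential of $\chi$ has full rank $21$; the locus where $d\chi$ has full rank is open and dense in $\Gcal{4}{0}{0}\times\mathcal{U}^{12}$, hence meets $\Gcal{4}{0}{0}$ times any dense open sublocus, so the same conclusion holds for $\chi$ restricted to such a sublocus. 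For unirationality, recall from Proposition~\ref{prop:U12} that a general $\mathcal{Q}^1\in\mathcal{U}^{12}$ is unirational, so the unirational members form a dense open $\mathcal{U}^{12}_{\mathrm u}\subseteq\mathcal{U}^{12}$; each $\varphi\in\Gcal{4}{0}{0}$ induces an isomorphism $\varphi^*\mathcal{Q}^1\cong\mathcal{Q}^1$ (of conic bundles over $\PP^1$, since the formulas \eqref{eq:aut800} act on the base coordinates $x_0,x_1$), so unirationality is $\chi$-invariant. Hence every member of the still-dense set $\chi(\Gcal{4}{0}{0}\times\mathcal{U}^{12}_{\mathrm u})$ is unirational.

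Non-rationality is handled the same way. By Lemma~\ref{lem:nonrationalCB844}, after diagonalising the generic fiber one obtains a model $a(t)x^2+b(t)y^2=z^2$ with a place where $v_t(a)$ is odd and residue represented by $4c_0c_2-c_1^2$, which is a non-square in $k$ for general parameters of $\mathcal{U}^{12}$ (the linear relations cutting out $\mathcal{U}^{12}$ do not pin down the constants $\sigma_{1,1},\sigma_{1,2},\sigma_{2,2}$); since a $k$-rational conic bundle surface has trivial Brauer class over $k(\PP^1)$, the non-rational members form a dense open $\mathcal{U}^{12}_{\mathrm{nr}}\subseteq\mathcal{U}^{12}$, and non-rationality is $\chi$-invariant because rationality is a birational invariant. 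For minimality note first that a conic bundle over $\PP^1$ carrying a rational section is $k$-rational (project from the section), so non-rationality already rules out rational sections; it therefore remains to check extremality, i.e.\ $\rho(\mathcal{Q}^1/\PP^1)=1$. For general parameters the discriminant is a separable octic, and at each of its roots the fiber degenerates to a pair of lines swapped by $\Gal(\bar k/k)$ --- the relevant quadratic extension being again governed by $4c_0c_2-c_1^2\notin k^{*2}$ --- so no component of a singular fiber is individually defined over $k$; thus the general member is extremal, hence minimal. Extremality is $\chi$-invariant since \eqref{eq:aut800} acts by automorphisms of the conic-bundle structure over $\PP^1$.

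Putting these together, $\mathcal{V}\coloneqq\mathcal{U}^{12}_{\mathrm u}\cap\mathcal{U}^{12}_{\mathrm{nr}}\cap\{\text{extremal}\}$ is a dense open subset of $\mathcal{U}^{12}$ all of whose members are minimal, unirational and non-rational; by the remarks above $\chi(\Gcal{4}{0}{0}\times\mathcal{V})$ is a dense constructible subset of $\PVV{8}{4}{4}{0}{0}{0}$, contained in $\mathcal{D}_{8,4,4,0,0,0}$, every point of which represents such a conic bundle. Since $\overline{\mathcal{D}_{8,4,4,0,0,0}}=\PVV{8}{4}{4}{0}{0}{0}$, the general point of $\mathcal{D}_{8,4,4,0,0,0}$ indeed represents a minimal, unirational, non-rational conic bundle. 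Here the hypothesis that $k$ is $C_1$ is used only for unirationality (through Proposition~\ref{prop:U12}), perfectness for the Brauer-residue computations, and non-algebraic-closedness (together with infinitude) for both non-rationality and extremality.

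The one point that is not purely formal is minimality: Proposition~\ref{prop:min844} establishes it for general members of $\mathcal{U}^{\delta}$, not of $\mathcal{U}^{12}$, so one must verify that its argument --- separability of the discriminant octic together with the non-$k$-rationality of the components of a reducible geometric fiber --- still goes through under the linear constraints defining $\mathcal{U}^{12}$. Because those constraints leave $\sigma_{1,1},\sigma_{1,2},\sigma_{2,2}$ (hence the residue $4c_0c_2-c_1^2$) free, the argument carries over unchanged; but this is the step that should be spelled out rather than merely cited, and it is where I expect the only real bookkeeping to lie.
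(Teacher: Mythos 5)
Your proposal is correct in substance, but it assembles the theorem from a different (overlapping) set of ingredients than the paper does. The paper's proof is a one-line combination of Proposition~\ref{prop:Me844000} and Lemma~\ref{lem:d8} (Mestre-based unirationality on the dense locus $\mathcal{U}^{\delta}$), Proposition~\ref{prop:844NotRational}, and Proposition~\ref{prop:min844} (minimality, again on $\mathcal{U}^{\delta}$). You instead take unirationality from the Cremona construction of Proposition~\ref{prop:U12} directly on $\mathcal{U}^{12}$, non-rationality from Lemma~\ref{lem:nonrationalCB844}, and you rerun the minimality argument of Proposition~\ref{prop:min844} inside $\mathcal{U}^{12}$, deriving ``no section'' from non-rationality rather than from a separate residue computation. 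Your route is arguably more faithful to the statement, since $\mathcal{D}_{8,4,4,0,0,0}$ is defined as $\chi(\Gcal{4}{0}{0}\times\mathcal{U}^{12})$ and the paper's cited results about $\mathcal{U}^{\delta}$ concern a \emph{different} dense, non-open locus; transferring them to the general point of $\mathcal{D}_{8,4,4,0,0,0}$ requires exactly the kind of compatibility check you carry out for minimality (intersections of dense non-open sets need not be dense). What the paper's route buys is unirationality via Mestre under weaker hypotheses ($\mathcal{U}^{\delta}$ does not need $k$ to be $C_1$), whereas your route keeps everything inside the single locus $\mathcal{U}^{12}$ and makes the interaction of the three conditions transparent. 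Two small imprecisions to fix: the non-rationality and extremality loci are cut out by the non-square condition $4c_0c_2-c_1^2\notin k^{*2}$, so they are Zariski dense but \emph{not} open --- your argument survives because, as you note, the two conditions are governed by the same class and you intersect them with genuinely open genericity loci, but you should not label them ``dense open''; and the claim that this class is a non-square for general parameters tacitly assumes $k^{\times}/k^{\times2}\neq 1$ (an assumption the paper also makes silently under ``not algebraically closed''). Your closing observation --- that adapting Proposition~\ref{prop:min844} from $\mathcal{U}^{\delta}$ to $\mathcal{U}^{12}$ is the one step needing verification --- correctly identifies the only real gap in the paper's own citation chain.
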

\begin{proof}
Combine Propositions~\ref{prop:Me844000}, \ref{prop:844NotRational}, Lemma~\ref{lem:d8}, and Proposition~\ref{prop:min844}.
\end{proof}

\section{Deformations of the conic bundles and of the ambient projective bundles}\label{defs}

On \(\mathbb{P}^1\) every vector bundle splits as \(\mathcal{E}\simeq\bigoplus_{i=1}^3 \mathcal O(a_i)\) with \(a_1\ge a_2\ge a_3\).
The projectivization \(\mathbb{P}(\mathcal{E})\) depends only on the isomorphism class of \(\mathcal{E}\) up to twist, and its deformation class only on \(c_1(\mathcal{E})\bmod 3\). 
Since \(\sum a_i=4\equiv 1\pmod 3\) for
\[
(2,1,1),\quad (2,2,0),\quad (3,1,0),\quad (4,0,0),
\]
all four types belong to the same deformation component of projective bundles \(\mathbb{P}(\mathcal{E})\to\mathbb{P}^1\). The splitting types can be arranged, from more general to more special as follows:
\[
(2,1,1)\ \succ\ (2,2,0)\ \succ\ (3,1,0)\ \succ\ (4,0,0).
\]

\subsection{Infinitesimal deformations of conic bundles}

We study infinitesimal deformations of conic bundles, and show that for types $(2,2,0)$, $(3,1,0)$ and $(4,0,0)$ there exist embedded deformations of the conic bundle inside its ambient projective bundle that change the splitting type of the rank--$3$ bundle.

Fix the base curve $S:=\PP^{1}_{\kk}$ and a rank--$3$ vector
bundle $E$ on $S$. We denote by
\[
\pi\colon X:=\PP(E)\longrightarrow S
\]
the associated $\PP^{2}$--bundle. Let $i\colon Q\hookrightarrow X$ be a
smooth closed subscheme, flat over $S$, and set
$f:=\pi\circ i\colon Q\to S$.

We consider the following deformation functors on the category of Artin
local $\kk$--algebras with residue field $\kk$:
\begin{itemize}
\item[-] $\Def(E)$, the deformation functor of the vector bundle $E$ on $S$;
\item[-] $\Def(X/S)$, the deformation functor of $X$ as a $\PP^{2}$--bundle
      over $S$;
\item[-] $\Def(Q\subset X/S)$, the functor of simultaneous deformations of
      the pair $(Q\subset X)$ over $S$;
\item[-] $\Def(Q\subset X/S\mid X)$, the functor of embedded deformations
      of $Q$ inside the fixed ambient $X$ over $S$.
\end{itemize}
For any such functor $\mathcal{F}$ we write its Zariski tangent space as
\[
T\mathcal{F}:=\mathcal{F}\bigl(\kk[\varepsilon]/(\varepsilon^{2})\bigr).
\]
Recall that
\stepcounter{thm}
\begin{equation}\label{eq:given-isos}
T\Def(E)\;\cong\;H^{1}(S,\End E),
\qquad
T\Def(Q\subset X/S\mid X)\;\cong\;H^{0}(Q,N_{Q/X}),
\end{equation}
where $N_{Q/X}$ denotes the normal bundle of $Q$ in $X$.

\begin{Proposition}\label{prop:pair-conic-bundle}
In the notation above, there exists a canonical exact sequence
\stepcounter{thm}
\begin{equation}\label{eq:main-exact-sequence}
0 \longrightarrow H^{0}(Q,N_{Q/X})
  \longrightarrow T\Def(Q\subset X/S)
  \xrightarrow{\ \Phi\ } H^{1}(S,\End E)
  \xrightarrow{\ \delta\ } H^{1}(Q,N_{Q/X}),
\end{equation}
where:
\begin{itemize}
\item[-] $\Phi$ is induced by the natural morphism of deformation functors
      $\Def(Q\subset X/S)\to\Def(E)$ which forgets $Q$ and remembers
      only the deformation of $E$;
\item[-] $\delta$ is the composition
\[
\begin{aligned}
H^{1}(S,\End E)
&\;\cong\;T\Def(E)
\;\cong\;T\Def(X/S)
\longrightarrow \Ext^{1}\bigl(\Omega^{1}_{X/S},\OO_{X}\bigr)
\\
&\longrightarrow \Ext^{1}\bigl(\Omega^{1}_{X/S}\big|_{Q},\OO_{Q}\bigr)
\longrightarrow \Ext^{1}\bigl(\I/\I^{2},\OO_{Q}\bigr)
\cong H^{1}(Q,N_{Q/X}),
\end{aligned}
\]
where $\I\subset\OO_{X}$ is the ideal sheaf of $Q$, the last
identification uses $N_{Q/X}\cong(\I/\I^{2})^{\vee}$, and the middle
arrow is the connecting homomorphism associated with the relative
conormal exact sequence.
\end{itemize}

In particular, a first--order deformation of $E$ (equivalently, of the
$\PP^{2}$--bundle $X=\PP(E)$ over $S$) extends to a first--order
deformation of the pair $(Q\subset X)$ if and only if its image under
$\delta$ in $H^{1}(Q,N_{Q/X})$ vanishes.
\end{Proposition}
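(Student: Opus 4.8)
The plan is to build the exact sequence \eqref{eq:main-exact-sequence} from the standard comparison of deformation functors attached to the tower $Q\subset X\to S$, and then identify each term and map explicitly. First I would set up the local-to-global framework: for a morphism $f=\pi\circ i$ the functor $\Def(Q\subset X/S)$ fits into a long exact sequence relating it to $\Def(X/S)$ (deforming only the ambient $\PP^2$-bundle over the fixed base) and to $\Def(Q\subset X/S\mid X)$ (deforming $Q$ inside a fixed $X$). On tangent spaces this reads, in the most robust form, as the exact sequence
\[
0\longrightarrow H^0(Q,N_{Q/X})\longrightarrow T\Def(Q\subset X/S)\xrightarrow{\ \Phi\ } T\Def(X/S)\xrightarrow{\ \delta\ } H^1(Q,N_{Q/X}),
\]
where $\Phi$ is the forgetful map and $\delta$ is the obstruction to lifting a deformation of $X$ to a deformation of the pair. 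I would then invoke the isomorphisms in \eqref{eq:given-isos}, together with the identification $T\Def(X/S)\cong T\Def(E)\cong H^1(S,\End E)$ valid because $\PP(E)$ over $S$ is locally trivial and $\Aut$ of the fiber $\PP^2$ is $\mathrm{PGL}_3$, whose infinitesimal automorphisms as a bundle are governed by $\End E$ modulo scalars — here one must be slightly careful that $H^1(S,\End E)\cong H^1(S,\mathfrak{pgl}(E))$ since $H^1(S,\OO_S)=0$ on $\PP^1$. Substituting these identifications gives precisely \eqref{eq:main-exact-sequence}.

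Next I would make the connecting map $\delta$ explicit, matching the description in the statement. The relative conormal (Euler-type) sequence of $Q\subset X$ over $S$,
\[
0\longrightarrow \I/\I^2\longrightarrow \Omega^1_{X/S}\big|_Q\longrightarrow \Omega^1_{Q/S}\longrightarrow 0,
\]
together with the identification $N_{Q/X}\cong(\I/\I^2)^\vee$, produces a connecting homomorphism in $\Ext$. A first-order deformation of $X/S$ is a class in $\Ext^1_X(\Omega^1_{X/S},\OO_X)$; restricting to $Q$ gives a class in $\Ext^1_Q(\Omega^1_{X/S}|_Q,\OO_Q)$, and composing with the $\Ext$-boundary map of the conormal sequence lands in $\Ext^1_Q(\I/\I^2,\OO_Q)\cong H^1(Q,N_{Q/X})$ (the last isomorphism because $Q$ is smooth, so $N_{Q/X}$ is a line bundle and local $\Ext$ sheaves vanish, reducing $\Ext$ to $H^1$ of the Hom-sheaf). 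This is exactly the chain of arrows displayed in the statement, so $\delta$ is canonical. Exactness at each spot follows from the standard diagram chase in the theory of deformations of pairs: an ambient deformation lifts to a deformation of the pair iff the deformed ideal sheaf can be chosen flat over the Artin base, and the obstruction to this is precisely $\delta$; the kernel of $\delta$ is the image of $\Phi$; and the kernel of $\Phi$ consists of deformations fixing $X$, i.e.\ embedded deformations of $Q$, which is $H^0(Q,N_{Q/X})$ by \eqref{eq:given-isos}. Finally, the ``in particular'' clause is just the statement that a class in $H^1(S,\End E)$ lies in the image of $\Phi$ iff its image under $\delta$ vanishes, which is exactness at $H^1(S,\End E)$.

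The step I expect to require the most care is the precise bookkeeping that makes $\delta$ canonical and well-defined — in particular verifying that the composite through $\Ext^1_X(\Omega^1_{X/S},\OO_X)\to\Ext^1_Q(\Omega^1_{X/S}|_Q,\OO_Q)\to\Ext^1_Q(\I/\I^2,\OO_Q)$ does not depend on auxiliary choices (local trivializations of $E$, splittings of the conormal sequence), and that it genuinely computes the obstruction to deforming the pair rather than some a priori larger class. Here I would use that $X\to S$ is smooth so $\Omega^1_{X/S}$ is locally free, that $Q\hookrightarrow X$ is a smooth (hence regular) closed embedding so $\I/\I^2$ is locally free of rank one and $N_{Q/X}$ a line bundle, and that $S=\PP^1$ is affine-like enough cohomologically ($H^i(S,\OO_S)=0$ for $i>0$) to kill the scalar ambiguity $\mathfrak{gl}$ versus $\mathfrak{pgl}$. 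The identification $T\Def(X/S)\cong H^1(S,\End E)$ and its compatibility with $T\Def(E)$ is classical (deformations of a projective bundle over a fixed base are controlled by $H^1$ of the sheaf of infinitesimal bundle automorphisms), and I would cite the standard references; modulo that, the rest is a formal consequence of the long exact sequences of the pair and of the conormal sequence, assembled into the four-term sequence \eqref{eq:main-exact-sequence}.
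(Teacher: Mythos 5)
Your proposal follows essentially the same route as the paper: the forgetful map $\Phi$ to $\Def(E)$, the identification of its kernel with embedded deformations $T\Def(Q\subset X/S\mid X)\cong H^{0}(Q,N_{Q/X})$, the definition of $\delta$ as restriction to $Q$ followed by the connecting homomorphism of the relative conormal sequence, and exactness at $H^{1}(S,\End E)$ via the lifting criterion for the pair. Your additional remark that $H^{1}(S,\End E)\cong H^{1}(S,\End E/\OO_{S})$ because $H^{1}(\PP^{1},\OO_{\PP^{1}})=0$ is a worthwhile point of care that the paper passes over when asserting the equivalence of $\Def(E)$ and $\Def(X/S)$.
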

\begin{proof}
There is a natural forgetful morphism of deformation functors
\[
F\colon\Def(Q\subset X/S)\longrightarrow\Def(E),
\]
sending a deformation of the pair
\[
(Q_{A}\subset X_{A}=\PP(E_{A})\xrightarrow{\pi_{A}}S_{A})
\]
(over an Artin $\kk$--algebra $A$) to the underlying deformation
$E_{A}$ of $E$. Passing to tangent spaces for
$A=\kk[\varepsilon]/(\varepsilon^{2})$ we obtain a linear map
\[
dF\colon T\Def(Q\subset X/S)\longrightarrow T\Def(E)
\cong H^{1}(S,\End E),
\]
where we used the first isomorphism in \eqref{eq:given-isos}. By
definition this is the map $\Phi$ in
\eqref{eq:main-exact-sequence}.

An element of $\ker(dF)$ is a
first--order deformation of the pair $(Q\subset X)$ for which the
induced deformation of $E$ is trivial, i.e.\ isomorphic to the product
deformation $E\otimes_{\kk}\kk[\varepsilon]/(\varepsilon^{2})$. Since deformations of $E$ and deformations of
the projective bundle $\PP(E)$ over $S$ are equivalent, this means
that the deformation $X_{A}$ of $X$ is isomorphic to the trivial
deformation $X\times\Spec\kk[\varepsilon]/(\varepsilon^{2})$.

Fix such an isomorphism. Then any element of the kernel can be viewed as
a deformation of $Q$ as a closed subscheme of the fixed ambient
$X$, i.e.\ as an element of $T\Def(Q\subset X/S\mid X)$. Different
choices of the trivialization of the ambient deformation differ by an
automorphism of the trivial deformation which is the identity on the
special fiber, hence they induce isomorphic deformations of $Q$ inside
$X$. Therefore there is a canonical identification
\[
\ker(dF)\;\cong\;T\Def(Q\subset X/S\mid X).
\]
Using the second isomorphism in \eqref{eq:given-isos} we obtain
\[
\ker(dF)\;\cong\;H^{0}(Q,N_{Q/X}),
\]
and hence a short exact sequence
\[
0\longrightarrow H^{0}(Q,N_{Q/X})
  \longrightarrow T\Def(Q\subset X/S)
  \xrightarrow{\ \Phi\ } T\Def(E).
\]
Composing with the isomorphism
$T\Def(E)\cong H^{1}(S,\End E)$ gives the left exact part of
\eqref{eq:main-exact-sequence}:
\[
0\longrightarrow H^{0}(Q,N_{Q/X})
  \longrightarrow T\Def(Q\subset X/S)
  \xrightarrow{\ \Phi\ } H^{1}(S,\End E).
\]
Consider the smooth morphisms $\pi\colon X\to S$ and $f\colon Q\to S$.
The relative conormal sequence for the closed immersion $i\colon Q\to X$
is
\stepcounter{thm}
\begin{equation}\label{eq:relative-conormal}
0\longrightarrow \I/\I^{2}
 \longrightarrow \Omega^{1}_{X/S}\big|_{Q}
 \longrightarrow \Omega^{1}_{Q/S}
 \longrightarrow 0.
\end{equation}
Since $Q$ is smooth in $X$, the sheaf $\I/\I^{2}$ is locally free on
$Q$, and the normal bundle is
\[
N_{Q/X}:=(\I/\I^{2})^{\vee}.
\]
Apply the functor $\mathcal{H}om_{Q}(-,\OO_{Q})$ to
\eqref{eq:relative-conormal}. We get a long exact sequence
\[
\begin{aligned}
\cdots &\to
 \Hom\bigl(\Omega^{1}_{X/S}\big|_{Q},\OO_{Q}\bigr)
 \to \Hom\bigl(\I/\I^{2},\OO_{Q}\bigr)
 \\
 &\to \Ext^{1}\bigl(\Omega^{1}_{Q/S},\OO_{Q}\bigr)
 \xrightarrow{\ \psi\ }
 \Ext^{1}\bigl(\Omega^{1}_{X/S}\big|_{Q},\OO_{Q}\bigr)
 \xrightarrow{\ \partial\ }
 \Ext^{1}\bigl(\I/\I^{2},\OO_{Q}\bigr)
 \to \cdots
\end{aligned}
\]

Because $\I/\I^{2}$ is locally free and $N_{Q/X}=(\I/\I^{2})^{\vee}$, we
have
\[
\Ext^{i}\bigl(\I/\I^{2},\OO_{Q}\bigr)
\simeq H^{i}(Q,N_{Q/X}),\qquad i\ge 0.
\]
In particular,
\[
\Ext^{1}\bigl(\I/\I^{2},\OO_{Q}\bigr)\;\cong\;H^{1}(Q,N_{Q/X}).
\]

The map on the right in \eqref{eq:main-exact-sequence},
\[
\delta\colon H^{1}(S,\End E)\longrightarrow H^{1}(Q,N_{Q/X}),
\]
is defined as follows. By \eqref{eq:given-isos} and the standard
identification of first--order deformations of a smooth morphism with
$\Ext^{1}$ of its relative cotangent sheaf, we have
\[
H^{1}(S,\End E)\cong T\Def(E)\cong T\Def(X/S)
\cong \Ext^{1}\bigl(\Omega^{1}_{X/S},\OO_{X}\bigr).
\]
Restricting to $Q$ gives a homomorphism
\[
\Ext^{1}\bigl(\Omega^{1}_{X/S},\OO_{X}\bigr)
\longrightarrow \Ext^{1}\bigl(\Omega^{1}_{X/S}\big|_{Q},\OO_{Q}\bigr),
\]
and composing with the connecting map
\[
\partial\colon
\Ext^{1}\bigl(\Omega^{1}_{X/S}\big|_{Q},\OO_{Q}\bigr)
\longrightarrow
\Ext^{1}\bigl(\I/\I^{2},\OO_{Q}\bigr)
\cong H^{1}(Q,N_{Q/X})
\]
we obtain $\delta$. We must show that
\[
\Ima(\Phi)=\ker(\delta)\subset H^{1}(S,\End E).
\]

Let $\xi\in T\Def(Q\subset X/S)$ be a first--order deformation of the
pair, and let $\eta=\Phi(\xi)\in H^{1}(S,\End E)$ be the induced
first--order deformation of $E$ (equivalently, of $X$ over $S$). By
construction, $\xi$ consists of a deformation $X_{A}$ of $X$ over $S_{A}$,
together with a deformation $Q_{A}\subset X_{A}$ of $Q$. The class of
$X_{A}$ determines an element of
$\Ext^{1}(\Omega^{1}_{X/S},\OO_{X})$, whose restriction to $Q$ is an
element of $\Ext^{1}(\Omega^{1}_{X/S}|_{Q},\OO_{Q})$. The existence of
$Q_{A}\subset X_{A}$ precisely means that this element lies in the image
of
\[
\psi\colon
\Ext^{1}\bigl(\Omega^{1}_{Q/S},\OO_{Q}\bigr)\longrightarrow
\Ext^{1}\bigl(\Omega^{1}_{X/S}\big|_{Q},\OO_{Q}\bigr)
\]
in the long exact sequence above. By exactness at
$\Ext^{1}(\Omega^{1}_{X/S}|_{Q},\OO_{Q})$, this is equivalent to
$\partial(\operatorname{res}(\eta))=0$, hence to $\delta(\eta)=0$ in
$H^{1}(Q,N_{Q/X})$. Thus $\Ima(\Phi)\subset\ker(\delta)$.

Conversely, let $\eta\in H^{1}(S,\End E)$ be a first--order deformation
of $E$ and consider the associated deformation $X_{A}$ of $X$ over
$S_{A}$. Suppose that $\delta(\eta)=0$. By definition of $\delta$, this
means that the element
\[
\operatorname{res}(\eta)\in
\Ext^{1}\bigl(\Omega^{1}_{X/S}\big|_{Q},\OO_{Q}\bigr)
\]
coming from the deformation of $X$ lies in the image of $\psi$. Hence
there exists a first--order deformation $Q_{A}\to S_{A}$ of $Q$ whose
class in $\Ext^{1}(\Omega^{1}_{Q/S},\OO_{Q})$ maps to
$\operatorname{res}(\eta)$. The equality of these classes expresses
precisely the compatibility condition for $Q_{A}$ to embed as a closed
subscheme $Q_{A}\subset X_{A}$ deforming $Q\subset X$. Thus $\eta$ comes
from a deformation of the pair $(Q\subset X)$, i.e.\ there exists
$\xi\in T\Def(Q\subset X/S)$ with $\Phi(\xi)=\eta$. Hence
$\ker(\delta)\subset\Ima(\Phi)$.
\end{proof}

\begin{Corollary}\label{cor:conic-bundle}
In the situation of Proposition~\ref{prop:pair-conic-bundle}, assume in
addition that $Q\subset X=\PP(E)$ is a smooth conic bundle. The normal bundle of $Q$ in $X$ is
\[
N_{Q/X}\;\cong\;\OO_{Q}(Q)
\;\cong\;\bigl(\OO_{X}(2)\otimes\pi^{*}\mathcal{L}\bigr)\big|_{Q}
\;\cong\;\OO_{Q}(2)\otimes f^{*}\mathcal{L},
\]
and the exact sequence of
Proposition~\ref{prop:pair-conic-bundle} takes the form
$$
0 \longrightarrow
H^{0}\bigl(Q,\OO_{Q}(2)\otimes f^{*}\mathcal{L}\bigr)
\longrightarrow
T\Def(Q\subset\PP(E)/S)
\xrightarrow{\ \Phi\ }
H^{1}\bigl(S,\End E\bigr)
\xrightarrow{\ \delta\ }
H^{1}\bigl(Q,\OO_{Q}(2)\otimes f^{*}\mathcal{L}\bigr).
$$

In particular, a first--order deformation of the vector bundle $E$
(equivalently, of the $\PP^{2}$--bundle $X=\PP(E)$ over $S$) extends to
a first--order deformation of the conic bundle $Q\subset\PP(E)$ if and
only if its image under
\[
\delta\colon H^{1}(S,\End E)\longrightarrow
H^{1}\bigl(Q,\OO_{Q}(2)\otimes f^{*}\mathcal{L}\bigr)
\]
vanishes.
\end{Corollary}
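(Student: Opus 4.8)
The plan is to make the normal bundle $N_{Q/X}$ explicit and then substitute it into the exact sequence \eqref{eq:main-exact-sequence} of Proposition~\ref{prop:pair-conic-bundle}; the concluding assertion is then nothing but a restatement of exactness.

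First I would identify $N_{Q/X}$. By Definition~\ref{def1} the conic bundle $Q\subset X=\PP(E)$ is the zero locus of a section $\sigma\in H^{0}\bigl(X,\OO_{X}(2)\otimes\pi^{*}\mathcal{L}\bigr)$, so $Q$ is a smooth effective divisor with $\OO_{X}(Q)\cong\OO_{X}(2)\otimes\pi^{*}\mathcal{L}$ — this is exactly where the conic-bundle hypothesis enters. For a smooth divisor in a smooth variety adjunction gives $N_{Q/X}\cong\OO_{X}(Q)\big|_{Q}$; concretely, the section $\sigma$ identifies the conormal sheaf $\I/\I^{2}$ with $\OO_{X}(-Q)\big|_{Q}$, and dualizing gives $N_{Q/X}=(\I/\I^{2})^{\vee}\cong\OO_{Q}(Q)\cong\bigl(\OO_{X}(2)\otimes\pi^{*}\mathcal{L}\bigr)\big|_{Q}$. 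Since the tautological bundle restricts to the fiberwise tautological bundle, $\OO_{X}(2)\big|_{Q}\cong\OO_{Q}(2)$, and since $f=\pi\circ i$ one has $(\pi^{*}\mathcal{L})\big|_{Q}\cong f^{*}\mathcal{L}$; hence $N_{Q/X}\cong\OO_{Q}(2)\otimes f^{*}\mathcal{L}$, as claimed.

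Next I would substitute this isomorphism into \eqref{eq:main-exact-sequence}: replacing $H^{0}(Q,N_{Q/X})$ by $H^{0}\bigl(Q,\OO_{Q}(2)\otimes f^{*}\mathcal{L}\bigr)$ and $H^{1}(Q,N_{Q/X})$ by $H^{1}\bigl(Q,\OO_{Q}(2)\otimes f^{*}\mathcal{L}\bigr)$ yields verbatim the displayed four-term exact sequence of the corollary, with the same maps $\Phi$ and $\delta$. The final assertion is then immediate from exactness at $H^{1}(S,\End E)$: a class $\eta\in H^{1}(S,\End E)$, which under \eqref{eq:given-isos} corresponds to a first-order deformation of $E$ and hence of $X=\PP(E)$ over $S$, lies in $\Ima(\Phi)$ — equivalently, extends to a first-order deformation of the pair $(Q\subset X)$ — if and only if $\delta(\eta)=0$ in $H^{1}\bigl(Q,\OO_{Q}(2)\otimes f^{*}\mathcal{L}\bigr)$. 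This is precisely the criterion already recorded at the end of Proposition~\ref{prop:pair-conic-bundle}, now phrased with the explicit target cohomology group.

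I do not expect a genuine obstacle here: the statement is a direct corollary of the Proposition once the normal bundle is computed. The only points requiring care are the line-bundle bookkeeping — namely the equality $\OO_{X}(Q)=\OO_{X}(2)\otimes\pi^{*}\mathcal{L}$ (the content of the conic-bundle assumption) and the compatibility of restriction of $\OO_{X}(2)$ and $\pi^{*}\mathcal{L}$ with $i\colon Q\hookrightarrow X$ — together with the standard adjunction isomorphism $N_{Q/X}\cong\OO_{X}(Q)\big|_{Q}$, which is valid because $Q$ is smooth inside the smooth ambient $X$.
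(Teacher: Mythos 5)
Your proposal is correct and follows essentially the same route as the paper: identify $\OO_{X}(Q)$ with $\OO_{X}(2)\otimes\pi^{*}\mathcal{L}$ via the defining section, use $N_{Q/X}\cong\OO_{X}(Q)\big|_{Q}$ for the smooth divisor $Q$, restrict along $i$ using $i^{*}\OO_{X}(1)=\OO_{Q}(1)$ and $f=\pi\circ i$, and substitute into the exact sequence of Proposition~\ref{prop:pair-conic-bundle}. The concluding criterion is, as you say, just exactness at $H^{1}(S,\End E)$, exactly as in the paper.
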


\begin{proof}
By assumption, $Q$ is the zero locus of a regular section of the line
bundle
\[
L := \OO_{X}(2)\otimes\pi^{*}\mathcal{L}.
\]
Thus $Q$ is an effective Cartier divisor on $X$, and its divisor class
is
\[
[Q] = c_{1}(L) = 2\xi+\pi^{*}c_{1}(\mathcal{L}),
\]
where $\xi=c_{1}\bigl(\OO_{X}(1)\bigr)$. Since $Q$ is smooth in $X$, its
normal bundle is the restriction of $\OO_{X}(Q)$:
\[
N_{Q/X}\;\cong\;\OO_{X}(Q)\big|_{Q}.
\]
On the other hand, the line bundle associated with the Cartier divisor
$Q$ is precisely $L$, so $\OO_{X}(Q)\cong L$. Therefore
\[
N_{Q/X} \cong \OO_{X}(Q)\big|_{Q}
          \cong L\big|_{Q}
          \cong \bigl(\OO_{X}(2)\otimes\pi^{*}\mathcal{L}\bigr)\big|_{Q}
          \cong \OO_{Q}(2)\otimes f^{*}\mathcal{L},
\]
where we used $f=\pi\circ i\colon Q\to S$ and the identification
$i^{*}\OO_{X}(1)=\OO_{Q}(1)$.

Substituting $N_{Q/X}\cong\OO_{Q}(2)\otimes f^{*}\mathcal{L}$ into the exact
sequence of Proposition~\ref{prop:pair-conic-bundle} we get the claim.
\end{proof}

\begin{Corollary}\label{cor:splitting-deformation}
Let $\pi\colon \mathcal{Q}^1\to \PP^1$ be a splitting conic bundle with
\[
\mathcal{Q}^1 \;\subset\;
\mathcal{T}_{a_0,a_1,a_2}\;\cong\;\PP\big(\Ecal_{a_0,a_1,a_2}\big),
\qquad
\Ecal_{a_0,a_1,a_2}\;\cong\;
\OO_{\PP^1}(a_0)\oplus \OO_{\PP^1}(a_1)\oplus \OO_{\PP^1}(a_2),
\]
of multidegree $(d_{0,0},d_{0,1},d_{0,2},d_{1,1},d_{1,2},d_{2,2})$ satisfying 
\[
m\;\coloneqq\;d_{0,0}-2a_0
=d_{0,1}-a_0-a_1
=d_{0,2}-a_0-a_2
=d_{1,1}-2a_1
=d_{1,2}-a_1-a_2
=d_{2,2}-2a_2.
\]
Set $\mathcal{L}:=\OO_{\PP^1}(m)$. Then $\mathcal{Q}^1$ is a divisor in $\PP(\Ecal_{a_0,a_1,a_2})$ cut out by a section of
$\OO_{\PP(\Ecal_{a_0,a_1,a_2})}(2)\otimes \pi^{*}\mathcal{L}$, the normal bundle is
\[
N_{\mathcal{Q}^1/\PP(\Ecal_{a_0,a_1,a_2})}
\;\cong\;
\OO_{\mathcal{Q}^1}(2)\otimes \pi^{*}\mathcal{L},
\]
and there is a canonical exact sequence
\[
\begin{aligned}
0 \longrightarrow\;
&H^{0}\bigl(\mathcal{Q}^1,\OO_{\mathcal{Q}^1}(2)\otimes \pi^{*}\mathcal{L}\bigr)
\longrightarrow
T\Def\bigl(\mathcal{Q}^1\subset \PP(\Ecal_{a_0,a_1,a_2})/\PP^1\bigr)
\\
\longrightarrow\;
&H^{1}\bigl(\PP^1,\End \Ecal_{a_0,a_1,a_2}\bigr)
\longrightarrow
H^{1}\bigl(\mathcal{Q}^1,\OO_{\mathcal{Q}^1}(2)\otimes \pi^{*}\mathcal{L}\bigr),
\end{aligned}
\]
where
\[
\End \Ecal_{a_0,a_1,a_2}
\;\cong\;
\bigoplus_{0\le i,j\le 2}\OO_{\PP^1}(a_i-a_j).
\]
Equivalently
\[
\begin{aligned}
0 \longrightarrow\;
&H^{0}\bigl(\mathcal{Q}^1,\OO_{\mathcal{Q}^1}(2)\otimes \pi^{*}\mathcal{L}\bigr)
\longrightarrow
T\Def\bigl(\mathcal{Q}^1\subset \PP(\Ecal_{a_0,a_1,a_2})/\PP^1\bigr)
\\
\longrightarrow\;
&\bigoplus_{0\le i,j\le 2} H^{1}\bigl(\PP^1,\OO_{\PP^1}(a_i-a_j)\bigr)
\longrightarrow
H^{1}\bigl(\mathcal{Q}^1,\OO_{\mathcal{Q}^1}(2)\otimes \pi^{*}\mathcal{L}\bigr).
\end{aligned}
\]
In particular, a first--order deformation of the underlying vector bundle $\Ecal_{a_0,a_1,a_2}$
is induced by an embedded deformation of the splitting conic bundle
$\mathcal{Q}^1\subset \PP(\Ecal_{a_0,a_1,a_2})$ if and only if its class in
\[
H^{1}\bigl(\PP^1,\End \Ecal_{a_0,a_1,a_2}\bigr)
\;\cong\;
\bigoplus_{i,j} H^{1}\bigl(\PP^1,\OO_{\PP^1}(a_i-a_j)\bigr)
\]
lies in the kernel of the boundary map
\[
H^{1}\bigl(\PP^1,\End \Ecal_{a_0,a_1,a_2}\bigr)
\longrightarrow
H^{1}\bigl(\mathcal{Q}^1,\OO_{\mathcal{Q}^1}(2)\otimes \pi^{*}\mathcal{L}\bigr).
\]
\end{Corollary}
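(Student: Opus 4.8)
The plan is to derive the whole statement as the specialization of Corollary~\ref{cor:conic-bundle} to the case $W=\PP^{1}$, $E=\Ecal_{a_0,a_1,a_2}$, so that the only real content is to identify the twisting line bundle $\mathcal{L}$ and to make the group $H^{1}(\PP^1,\End\Ecal_{a_0,a_1,a_2})$ explicit.

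First I would pin down $\mathcal{L}$ by computing the class of $\mathcal{Q}^1$ in $\Pic(\Tcal_{a_0,a_1,a_2})\cong\ZZ^{2}$. In the $\ZZ^{2}$--graded Cox ring the variable $y_i$ has bidegree $(-a_i,1)$ and each $x_j$ has bidegree $(1,0)$, so every monomial $\sigma_{i,j}(x_0,x_1)\,y_iy_j$ occurring in the defining equation has bidegree $\bigl(d_{i,j}-a_i-a_j,\,2\bigr)$; the compatibility relations \eqref{compdeg} say precisely that this first coordinate equals the common value $m$ for every $i\le j$. Hence $\mathcal{Q}^1$ is an effective Cartier divisor of class $(m,2)$, which under the identification $\Tcal_{a_0,a_1,a_2}\cong\PP(\Ecal_{a_0,a_1,a_2})$ of Section~\ref{QB} is $\OO_{\PP(\Ecal_{a_0,a_1,a_2})}(2)\otimes\pi^{*}\OO_{\PP^1}(m)$; so I would set $\mathcal{L}=\OO_{\PP^1}(m)$. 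Feeding this into Corollary~\ref{cor:conic-bundle} then yields, in one shot, both the normal bundle formula $N_{\mathcal{Q}^1/\PP(\Ecal_{a_0,a_1,a_2})}\cong\OO_{\mathcal{Q}^1}(2)\otimes\pi^{*}\mathcal{L}$ and the four--term exact sequence in its first displayed form.

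Next I would rewrite the middle term. Since $\End\Ecal_{a_0,a_1,a_2}\cong\Ecal_{a_0,a_1,a_2}\otimes\Ecal_{a_0,a_1,a_2}^{\vee}\cong\bigl(\bigoplus_i\OO_{\PP^1}(a_i)\bigr)\otimes\bigl(\bigoplus_j\OO_{\PP^1}(-a_j)\bigr)\cong\bigoplus_{0\le i,j\le 2}\OO_{\PP^1}(a_i-a_j)$, and cohomology commutes with finite direct sums, I would conclude $H^{1}(\PP^1,\End\Ecal_{a_0,a_1,a_2})\cong\bigoplus_{i,j}H^{1}(\PP^1,\OO_{\PP^1}(a_i-a_j))$; substituting this isomorphism into the exact sequence produces the second displayed form. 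The concluding ``in particular'' assertion is then nothing but exactness of the sequence at $H^{1}(\PP^1,\End\Ecal_{a_0,a_1,a_2})$: a class there lies in the image of $\Phi$---equivalently, the associated first--order deformation of the $\PP^{2}$--bundle underlies an embedded deformation of $\mathcal{Q}^1$---if and only if the boundary map $\delta$ annihilates it.

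The one step needing a little care is the first: matching the toric Cox--theoretic packaging of $\PP(\Ecal_{a_0,a_1,a_2})$ and of the bidegree $(m,2)$ with the ambient $\PP^{2}$--bundle and the line bundle $\OO(2)\otimes\pi^{*}\mathcal{L}$ that appear in Corollary~\ref{cor:conic-bundle}, in particular getting the correct sign of $m$ and no spurious twist. Once one uses the conventions already fixed in Section~\ref{QB}---with $H_1=\pi^{*}\OO_{\PP^1}(1)$ and $H_2$ the relative tautological class---this is routine bookkeeping, and there is no substantive obstacle: the corollary is essentially a dictionary entry translating Corollary~\ref{cor:conic-bundle} into the explicit splitting coordinates of Section~\ref{QB}.
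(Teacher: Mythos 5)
Your proposal is correct and follows exactly the route the paper intends: the paper gives no separate proof of this corollary, treating it as the immediate specialization of Corollary~\ref{cor:conic-bundle} to $W=\PP^1$ and $E=\Ecal_{a_0,a_1,a_2}$, and your two supporting computations (reading off the divisor class $(m,2)$ from the $\ZZ^2$--graded Cox ring to identify $\mathcal{L}=\OO_{\PP^1}(m)$, and splitting $\End\Ecal_{a_0,a_1,a_2}\cong\bigoplus_{i,j}\OO_{\PP^1}(a_i-a_j)$) are precisely the details being left implicit. No gaps.
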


\begin{Remark}
We now apply Corollary~\ref{cor:splitting-deformation} to surface conic bundles
$\pi\colon S\to\PP^1$ with discriminant of degree $d_S=8$. 

One checks that in all four cases 
\[
m=d_{0,0}-2a_0=d_{1,1}-2a_1=d_{2,2}-2a_2=0,
\]
so that $\mathcal{L}\simeq\OO_{\PP^1}$ and
\[
N_{S/\PP(\Ecal_{a_0,a_1,a_2})}\;\cong\;\OO_{S}(2).
\]
Thus Corollary~\ref{cor:splitting-deformation} specializes to
\[
0 \longrightarrow H^{0}\bigl(S,\OO_{S}(2)\bigr)
\longrightarrow T\Def\bigl(S\subset \PP(\Ecal_{a_0,a_1,a_2})/\PP^1\bigr)
\longrightarrow H^{1}\bigl(\PP^1,\End \Ecal_{a_0,a_1,a_2}\bigr)
\longrightarrow H^{1}\bigl(S,\OO_{S}(2)\bigr).
\]

\smallskip\noindent
\emph{Cohomology of the normal bundle.}
We first compute $H^{i}(S,\OO_S(2))$, hence the cohomology of the normal bundle
$N_{S/\PP(\Ecal_{a_0,a_1,a_2})}\cong\OO_S(2)$, in terms of the splitting type
\[
\Ecal_{a_0,a_1,a_2}\cong\OO_{\PP^1}(a_0)\oplus\OO_{\PP^1}(a_1)\oplus\OO_{\PP^1}(a_2),
\qquad a_0\ge a_1\ge a_2\ge 0.
\]
Let $X=\PP(\Ecal_{a_0,a_1,a_2})$ and let $H$ be the relative hyperplane class, so
$\OO_X(1)=\OO_X(H)$ and $S$ is linearly equivalent to $2H$. Since $S$ is the zero locus of a
section of $\OO_X(2)$, there is an exact sequence
\stepcounter{thm}
\begin{equation}\label{eq:OX-OS2}
0\longrightarrow\OO_X\longrightarrow\OO_X(2)\longrightarrow\OO_S(2)\longrightarrow 0.
\end{equation}
The projection $\pi\colon X\to\PP^1$ is a $\PP^{2}$--bundle, and for $d\ge 0$ one has
\[
\pi_{*}\OO_X(d)\;\cong\;\Sym^{d}\Ecal_{a_0,a_1,a_2},
\qquad R^{q}\pi_{*}\OO_X(d)=0\ \text{for }q>0.
\]
In particular
\[
\pi_{*}\OO_X\cong\OO_{\PP^1},\ \ R^{q}\pi_{*}\OO_X=0\ (q>0),\qquad
\pi_{*}\OO_X(2)\cong\Sym^{2}\Ecal_{a_0,a_1,a_2},\ \ R^{q}\pi_{*}\OO_X(2)=0\ (q>0).
\]
Pushing \eqref{eq:OX-OS2} down to $\PP^1$ and using $R^{1}\pi_{*}\OO_S(2)=0$
(fibers of $S\to\PP^1$ are conics and $H^{1}(\text{conic},\OO(2))=0$) we obtain a short exact
sequence of vector bundles on $\PP^1$
\[
0\longrightarrow\OO_{\PP^1}\longrightarrow\Sym^{2}\Ecal_{a_0,a_1,a_2}
\longrightarrow\pi_{*}\OO_S(2)\longrightarrow 0,
\]
and Leray’s spectral sequence yields
\[
H^{0}(S,\OO_S(2))\;\cong\;H^{0}\!\bigl(\PP^1,\pi_{*}\OO_S(2)\bigr),\qquad
H^{1}(S,\OO_S(2))\;\cong\;H^{1}\!\bigl(\PP^1,\pi_{*}\OO_S(2)\bigr).
\]
The associated long exact sequence in cohomology gives
\[
0\longrightarrow H^{0}\bigl(\PP^1,\OO_{\PP^1}\bigr)
\longrightarrow H^{0}\bigl(\PP^1,\Sym^{2}\Ecal_{a_0,a_1,a_2}\bigr)
\longrightarrow H^{0}\bigl(S,\OO_S(2)\bigr)\longrightarrow 0,
\]
since $H^{1}(\PP^1,\OO_{\PP^1})=0$, and
\[
0\longrightarrow H^{1}\bigl(\PP^1,\Sym^{2}\Ecal_{a_0,a_1,a_2}\bigr)
\longrightarrow H^{1}\bigl(S,\OO_S(2)\bigr)\longrightarrow 0.
\]
Thus
$$
H^{1}\bigl(S,\OO_S(2)\bigr)\;\cong\;H^{1}\bigl(\PP^1,\Sym^{2}\Ecal_{a_0,a_1,a_2}\bigr),
\qquad
h^{0}\bigl(S,\OO_S(2)\bigr)=h^{0}\bigl(\PP^1,\Sym^{2}\Ecal_{a_0,a_1,a_2}\bigr)-1.
$$
Since
\[
\Ecal_{a_0,a_1,a_2}\cong\OO(a_0)\oplus\OO(a_1)\oplus\OO(a_2),
\]
we have
\[
\Sym^{2}\Ecal_{a_0,a_1,a_2}\;\cong\;
\bigoplus_{0\le i\le j\le 2}\OO_{\PP^1}(a_i+a_j),
\]
and on $\PP^1$ we know
\[
H^{1}\bigl(\PP^1,\OO_{\PP^1}(d)\bigr)=0\ \text{for }d\ge -1.
\]
In our four types all $a_i\ge 0$, hence $a_i+a_j\ge 0$ for all $i,j$, and
\[
H^{1}\bigl(\PP^1,\Sym^{2}\Ecal_{a_0,a_1,a_2}\bigr)=0,
\qquad
H^{1}\bigl(S,\OO_S(2)\bigr)=0.
\]
Moreover
\[
h^{0}\bigl(\PP^1,\Sym^{2}\Ecal_{a_0,a_1,a_2}\bigr)
=\sum_{0\le i\le j\le 2}h^{0}\bigl(\PP^1,\OO_{\PP^1}(a_i+a_j)\bigr)
=\sum_{0\le i\le j\le 2}(a_i+a_j+1),
\]
so
\[
h^{0}\bigl(S,\OO_S(2)\bigr)
=\sum_{0\le i\le j\le 2}(a_i+a_j+1)-1.
\]
A direct computation shows that for each of the four $d_S=8$ splitting types
\[
(a_0,a_1,a_2)\in\{(2,1,1),(2,2,0),(3,1,0),(4,0,0)\}
\]
one obtains
\[
h^{1}\bigl(S,\OO_S(2)\bigr)=0,\qquad h^{0}\bigl(S,\OO_S(2)\bigr)=21.
\]
In particular, for our conic bundles with $d_S=8$ the last arrow in the exact sequence above
\[
H^{1}\bigl(\PP^1,\End \Ecal_{a_0,a_1,a_2}\bigr)
\longrightarrow H^{1}\bigl(S,\OO_{S}(2)\bigr)
\]
is the zero map, and we have a short exact sequence
\stepcounter{thm}
\begin{equation}\label{eq:short-exact-d8}
0 \longrightarrow H^{0}\bigl(S,\OO_{S}(2)\bigr)
\longrightarrow T\Def\bigl(S\subset \PP(\Ecal_{a_0,a_1,a_2})/\PP^1\bigr)
\longrightarrow H^{1}\bigl(\PP^1,\End \Ecal_{a_0,a_1,a_2}\bigr)
\longrightarrow 0.
\end{equation}

\smallskip
Since
\[
\End \Ecal_{a_0,a_1,a_2}
\;\cong\;
\bigoplus_{0\le i,j\le 2}\OO_{\PP^1}(a_i-a_j),
\]
we can compute $H^{1}(\PP^1,\End \Ecal_{a_0,a_1,a_2})$ explicitly using
\[
H^{1}\bigl(\PP^1,\OO_{\PP^1}(t)\bigr)\cong 0 \text{ for } t\ge -1 \text{ and }
\dim H^{1}\bigl(\PP^1,\OO_{\PP^1}(-d)\bigr)=d-1 \text{ for } d\ge 2.
\]
Writing $a_0\ge a_1\ge a_2\ge 0$, we obtain:

\medskip\noindent
\emph{Type $(2,1,1)$:} here
\[
(a_0,a_1,a_2)=(2,1,1),
\]
so the degree matrix $(a_i-a_j)$ is
\[
\begin{pmatrix}
0 & 1 & 1\\
-1 & 0 & 0\\
-1 & 0 & 0
\end{pmatrix}.
\]
All negative degrees are $-1$, hence $H^{1}(\PP^1,\OO_{\PP^1}(a_i-a_j))=0$ for every
pair $(i,j)$, and
\[
H^{1}\bigl(\PP^1,\End \Ecal_{2,1,1}\bigr)=0.
\]
In particular, there are no first--order embedded
deformations of $S\subset \PP(\Ecal_{2,1,1})$ that change the splitting type of
$\Ecal_{2,1,1}$.

\medskip\noindent
\emph{Type $(2,2,0)$:} here
\[
(a_0,a_1,a_2)=(2,2,0),
\]
and
\[
(a_i-a_j)=
\begin{pmatrix}
0 & 0 & 2\\
0 & 0 & 2\\
-2 & -2 & 0
\end{pmatrix}.
\]
The only negative entries are $-2$, occurring twice; hence
\[
\dim H^{1}\bigl(\PP^1,\End \Ecal_{2,2,0}\bigr)
=2\cdot\dim H^{1}\bigl(\PP^1,\OO_{\PP^1}(-2)\bigr)
=2.
\]
So abstractly the bundle $\Ecal_{2,2,0}$ has a $2$--dimensional space of first--order
deformations (necessarily changing its splitting type). In view of
\eqref{eq:short-exact-d8}, every class in $H^{1}(\PP^1,\End \Ecal_{2,2,0})$ is induced by an embedded deformation of the pair
$S\subset \PP(\Ecal_{2,2,0})$.

\medskip\noindent
\emph{Type $(3,1,0)$:} here
\[
(a_0,a_1,a_2)=(3,1,0),
\qquad
(a_i-a_j)=
\begin{pmatrix}
0 & 2 & 3\\
-2 & 0 & 1\\
-3 & -1 & 0
\end{pmatrix}.
\]
The negative degrees $\le -2$ are $-2$ and $-3$, each appearing once, so
\[
\dim H^{1}\bigl(\PP^1,\End \Ecal_{3,1,0}\bigr)
= \dim H^{1}\bigl(\PP^1,\OO_{\PP^1}(-2)\bigr)
+ \dim H^{1}\bigl(\PP^1,\OO_{\PP^1}(-3)\bigr)
=1+2=3.
\]
Hence there is a $3$--dimensional space of infinitesimal deformations of
$\Ecal_{3,1,0}$ (again, necessarily changing its splitting type). By \eqref{eq:short-exact-d8} all these classes lift to embedded first--order deformations of the pair
$(S\subset\PP(\Ecal_{3,1,0}))$.

\medskip\noindent
\emph{Type $(4,0,0)$:} here
\[
(a_0,a_1,a_2)=(4,0,0),
\qquad
(a_i-a_j)=
\begin{pmatrix}
0 & 4 & 4\\
-4 & 0 & 0\\
-4 & 0 & 0
\end{pmatrix}.
\]
The negative degree $-4$ occurs twice, and
\[
\dim H^{1}\bigl(\PP^1,\OO_{\PP^1}(-4)\bigr)
=\dim H^{0}\bigl(\PP^1,\OO_{\PP^1}(2)\bigr)=3,
\]
so
\[
\dim H^{1}\bigl(\PP^1,\End \Ecal_{4,0,0}\bigr)=2\cdot 3=6.
\]
Thus the underlying bundle $\Ecal_{4,0,0}$ has a $6$--dimensional space of
first--order deformations changing its splitting. Again, by \eqref{eq:short-exact-d8}
every such class is realized by an embedded deformation of the conic bundle
$S\subset\PP(\Ecal_{4,0,0})$.

\medskip
Summarizing, for conic bundles $S\to\PP^1$ with discriminant degree $d_S=8$:
\begin{itemize}
\item[-] in type $(2,1,1)$ one has $H^{1}(\PP^1,\End \Ecal_{2,1,1})=0$, so no
      first--order embedded deformation of the pair $(S\subset\PP(\Ecal_{2,1,1}))$
      can change the splitting type of the rank--$3$ bundle;
\item[-] in types $(2,2,0)$, $(3,1,0)$, and $(4,0,0)$ the groups
      $H^{1}(\PP^1,\End \Ecal_{a_0,a_1,a_2})$ have dimensions $2$, $3$, and $6$,
      respectively. Our computation of the cohomology of the normal bundle shows that
      $H^{1}(S,\OO_S(2))=0$ in all four types, so the boundary map
      \[
      H^{1}\bigl(\PP^1,\End \Ecal_{a_0,a_1,a_2}\bigr)\longrightarrow H^{1}\bigl(S,\OO_{S}(2)\bigr)
      \]
      is identically zero. Consequently the exact sequence in
      Corollary~\ref{cor:splitting-deformation} reduces to
      \[
      0\to H^{0}(S,\OO_S(2))\to T\Def\bigl(S\subset\PP(\Ecal_{a_0,a_1,a_2})/\PP^1\bigr)
      \twoheadrightarrow H^{1}\bigl(\PP^1,\End \Ecal_{a_0,a_1,a_2}\bigr)\to 0,
      \]
      and every infinitesimal deformation of the vector bundle $\Ecal_{a_0,a_1,a_2}$
      is induced by an embedded first--order deformation of the pair $(S,\PP(\Ecal_{a_0,a_1,a_2}))$.
      Vector bundles on $\PP^1$ with fixed splitting
      type have no nontrivial deformations, so any nonzero class in
      $H^{1}(\PP^1,\End \Ecal_{a_0,a_1,a_2})$ corresponds to a deformation in which the
      splitting type actually changes. Thus, for types $(2,2,0)$, $(3,1,0)$ and $(4,0,0)$
      there exist embedded deformations of the conic bundle inside its ambient
      projective bundle that change the splitting type of the rank--$3$ bundle.
\end{itemize}
\end{Remark}

Next, we construct explicit $1$-parameter deformations of conic bundles changing the splitting type of the ambient projective bundle. 

\subsubsection{Degenerating $\mathcal{T}_{2,1,1}$ to $\mathcal{T}_{2,2,0}$}\label{(2,1,1)->(2,2,0)}

We work in the products
\[
\PP^9\times\PP^1\;\; \text{ with coordinates } \;\; [Z_0:\cdots:Z_9]\,,\ [A:B],
\qquad
\PP^6\times\PP^1\;\; \text{ with coordinates } \;\; [z_0:\cdots:z_6]\,,\ [a:b].
\]

Let $\widetilde{X}$ be the subscheme cut by the $2\times 2$ minors of the $2\times 7$ matrix
\[
M_{A,B}\;=\;
\begin{pmatrix}
	Z_0 & Z_1 & Z_2 & Z_4 & Z_5 & A\,Z_6+B\,Z_7 & Z_8\\[2pt]
	Z_1 & Z_2 & Z_3 & Z_5 & Z_6 & A\,Z_7+B\,Z_8 & Z_9
\end{pmatrix}.
\]
For $[A:B]\in\PP^1$ denote by
\[
X_{A,B}\;\coloneqq\;\widetilde{X}\cap\big(\PP^9\times\{[A:B]\}\big)\;\subset\;\PP^9
\]
the fiber over $[A:B]$.

\begin{Lemma}\label{lem:X-fibers}
	For $B\neq 0$ one has $X_{A,B}\cong \PP\!\big(\OO_{\PP^1}(3)\oplus\OO_{\PP^1}(2)\oplus\OO_{\PP^1}(2)\big)$ i.e. $X_{A,B}$ is the rational normal scroll $S(3,2,2)\subset\PP^9$. For $[A:B]=[1:0]$ one has $X_{1,0}\cong \PP\!\big(\OO_{\PP^1}(3)\oplus\OO_{\PP^1}(3)\oplus\OO_{\PP^1}(1)\big)=S(3,3,1)\subset\PP^9$.
\end{Lemma}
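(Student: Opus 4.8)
The plan is to invoke the classical structure theory of $1$-generic matrices of linear forms. Recall that if $M$ is a $2\times m$ matrix of linear forms on $\PP^N$ which is $1$-generic --- no nonzero scalar combination of the two rows, followed by no nonzero scalar combination of the $m$ columns, yields the zero form --- then the ideal of $2\times 2$ minors of $M$ cuts out an irreducible, arithmetically Cohen--Macaulay rational normal scroll of dimension $N-m+1$, realized as a $\PP^{\,N-m}$-bundle over $\PP^1$ via the map $[\,\mathrm{row}_1:\mathrm{row}_2\,]$ recording the common zero-direction of the columns; hence it is isomorphic to $\PP(\OO_{\PP^1}(a_0)\oplus\cdots\oplus\OO_{\PP^1}(a_{N-m}))$ with all $a_i\ge 0$, and when $M$ is in block-catalecticant normal form the $a_i$ are read off as the sizes of the column blocks (equivalently, one less than the sizes of the variable blocks). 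This is Eisenbud's theorem on $1$-generic matrices; see also Eisenbud--Harris on varieties of minimal degree.

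First I would check that $M_{A,B}$ is $1$-generic for every $[A:B]\in\PP^1$. This is a short linear-algebra verification: for nonzero $(u_1,u_2)$ the row combination $u_1\,\mathrm{row}_1+u_2\,\mathrm{row}_2$ is a $7$-tuple of linear forms whose overlapping (``chained'') shape forces any column combination annihilating it to vanish, and the only entry in which $[A:B]$ appears is the sixth, which one treats for $B\ne 0$ and for $B=0$ separately. Hence each $X_{A,B}\subset\PP^9$ is a rational normal $3$-fold scroll of minimal degree, so $X_{A,B}\cong\PP(\OO_{\PP^1}(a_0)\oplus\OO_{\PP^1}(a_1)\oplus\OO_{\PP^1}(a_2))$ for a unique $a_0\le a_1\le a_2$ with $a_0+a_1+a_2=7$, and its ruling is the pencil $[Z_0:Z_1]$.

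It remains to pin down the partition. When $[A:B]=[1:0]$ the sixth column becomes $A\cdot(Z_6,Z_7)^{\mathsf T}$, so up to the unit $A$ the matrix $M_{1,0}$ is in block-catalecticant form with variable blocks $\{Z_0,Z_1,Z_2,Z_3\}$, $\{Z_4,Z_5,Z_6,Z_7\}$, $\{Z_8,Z_9\}$ --- column blocks of sizes $3,3,1$ --- which reads off $(a_0,a_1,a_2)=(1,3,3)$, i.e.\ $X_{1,0}=S(3,3,1)\cong\PP(\OO(3)\oplus\OO(3)\oplus\OO(1))$. Likewise, at the single point $[A:B]=[0:1]$ the blocks are $\{Z_0,Z_1,Z_2,Z_3\}$, $\{Z_4,Z_5,Z_6\}$, $\{Z_7,Z_8,Z_9\}$, giving $S(3,2,2)$. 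For a general $[A:B]$ with $B\ne 0$, normalize to $[s:1]$; now $M_{s,1}$ is no longer in coordinate-aligned block form, so instead I would describe the ruling fibers directly: solving the system ``every column of $M_{s,1}(Z)$ is proportional to a fixed $(p,q)$'' one writes the fiber $2$-plane over $[p:q]\in\PP^1$ explicitly, exhibits a section of degree $2$ (a conic, of the shape $[\,0:\cdots:0:-p^2:p(sp-q):q(sp-q)\,]$ in the last three coordinates), and checks from the same formulas that $X_{s,1}$ contains no line mapping isomorphically to $\PP^1$ under the ruling. Thus the minimal directrix degree of $X_{s,1}$ is exactly $2$, i.e.\ $a_0=2$; together with $a_0+a_1+a_2=7$ and $a_0\le a_1\le a_2$ this forces $(a_0,a_1,a_2)=(2,2,3)$, so $X_{A,B}\cong S(3,2,2)\cong\PP(\OO_{\PP^1}(3)\oplus\OO_{\PP^1}(2)\oplus\OO_{\PP^1}(2))$.

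The main obstacle is this last step. Once $B\ne 0$ and $A\ne 0$ the scroll sits skew to the coordinate flags, so its type cannot simply be read off a block pattern; one must compute a genuine invariant of $X_{s,1}$, namely the minimal degree of a section of its ruling (equivalently $\pi_*\OO(1)$), and in particular must rule out a line section in order to separate the type $(2,2,3)$ from $(1,2,4)$. A tempting alternative --- flatness of $\widetilde X\to\PP^1$ (all fibers being minimal-degree threefolds in $\PP^9$, with constant Hilbert polynomial) together with upper-semicontinuity of the scroll type, which already gives an open dense set of $[A:B]$ of balanced type $(2,2,3)$ containing $[0:1]$ --- still requires ruling out a possible jump at some intermediate $[s:1]$, and this comes back to the same fiberwise computation, so it does not really circumvent the core point.
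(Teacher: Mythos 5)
Your route is genuinely different from the paper's and is correct in outline. The paper proceeds by normalization: for $B\neq 0$ it scales to $B=1$, performs the explicit linear change $W_6=AZ_6+Z_7$, $W_7=AZ_7+Z_8$ to bring $M_{A,1}$ to the standard block (catalecticant) form of $S(3,2,2)$, and records the parametrizations $F$ and $G$ of the two scroll types; the splitting type is thus read off a normal form and no invariant of the fiber is computed. You instead invoke Eisenbud's theorem on $1$-generic matrices to conclude that every $X_{A,B}$ is a threefold scroll $\PP(\OO_{\PP^1}(a_0)\oplus\OO_{\PP^1}(a_1)\oplus\OO_{\PP^1}(a_2))$ with $a_0+a_1+a_2=7$, and then pin down the partition by computing the minimal degree of a directrix. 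This buys robustness --- you never need to find the normalizing coordinate change, which for $A\neq 0$ is not entirely transparent since the third block of $M_{A,1}$ is no longer coordinate-aligned --- at the price of the fiberwise computation you flag as the ``main obstacle.'' That step does close, and more easily than you suggest. A degree-$0$ directrix (cone vertex) is excluded because the $14$ entries of $M_{A,B}$ span all of $H^0(\PP^9,\OO(1))$. A line directrix is excluded by writing $Z_i=\alpha_i p+\beta_i q$ and imposing the seven proportionality conditions $q\,(\mathrm{col}_j)_1=p\,(\mathrm{col}_j)_2$ identically in $[p:q]$: columns $1$--$5$ force $Z_0=\cdots=Z_6=0$, column $7$ forces $Z_8=wp$, $Z_9=wq$, and column $6$ then yields $B\beta_7=0$, $B\alpha_7=A\beta_7$, $A\alpha_7+Bw=0$, which for $B\neq 0$ forces $Z_7=0$ and $w=0$. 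Since $(3,2,2)$ is the only partition of $7$ into three parts all $\geq 2$, your conic directrix $[p^2:p(q-Ap):q(q-Ap)]$ in $\langle Z_7,Z_8,Z_9\rangle$ (which I checked is correct) then settles the type. To make the write-up complete you must include this elimination explicitly; as you yourself observe, the semicontinuity shortcut only controls a dense open set of $[A:B]$ and cannot replace it.
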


\begin{proof}
	The ideal of a $3$–fold rational normal scroll $S(\alpha,\beta,\gamma)\subset\PP^{\alpha+\beta+\gamma+2}$ is generated by the $2\times 2$ minors of a $2\times(\alpha+\beta+\gamma+3)$ matrix of linear forms whose columns come in three consecutive blocks of lengths $\alpha,\beta,\gamma$.
	
	Assume $B\neq 0$. Up to scaling we may take $B=1$ and keep $A$ as a parameter. Set new linear coordinates
	\[
	W_6\coloneqq A\,Z_6+Z_7,\qquad W_7\coloneqq A\,Z_7+Z_8.
	\]
	Then $M_{A,1}$ is equivalent, via invertible linear changes in the ambient $\PP^9$, to the matrix 
	\[
	(Z_0,Z_1),(Z_1,Z_2),(Z_2,Z_3)\;\big|\; (Z_4,Z_5),(Z_5,W_6)\;\big|\; (W_7,Z_8),
	\]
	i.e.\ to the standard $S(3,2,2)$. In particular, the $2\times 2$ minors of $M_{A,1}$ vanish on the image of the morphism
	\[
	\begin{footnotesize}
		\begin{array}{lccc}
			F: & \mathcal{T}_{2,1,1} & \longrightarrow  & \PP^9 \\ 
			& [x_0:x_1;\,y_0:y_1:y_2] & \longmapsto & [\,y_0x_0^3:\ y_0x_0^2x_1:\ y_0x_0x_1^2:\ y_0x_1^3:\ y_1x_0^2:\ y_1x_0x_1:\ y_1x_1^2:\ y_2x_0^2:\ y_2x_0x_1:\ y_2x_1^2\,]. 
		\end{array} 
	\end{footnotesize}
	\]
	whose image is $\PP(\OO(3)\oplus\OO(2)\oplus\OO(2))$. 
	
	Take $B = 0$. Now, the mixed column becomes $(Z_6;Z_7)$, so the seven consecutive pairs read
	\[
	(Z_0,Z_1),(Z_1,Z_2),(Z_2,Z_3)\;\big|\; (Z_4,Z_5),(Z_5,Z_6),(Z_6,Z_7)\;\big|\; (Z_8,Z_9),
	\]
	i.e.\ two blocks of lengths $3$ and $3$ and a block of length $1$, the standard presentation of $S(3,3,1)\subset\PP^9$. Equivalently, $X_{1,0}$ is the image of
	\[
	\begin{footnotesize}
		\begin{array}{lccc}
			G: & \mathcal{T}_{2,2,0} & \longrightarrow  & \PP^9 \\ 
			& [x_0:x_1;\,y_0:y_1:y_2] & \longmapsto & [\,y_0x_0^3:y_0x_0^2x_1:y_0x_0x_1^2:y_0x_1^3:\; y_1x_0^3:y_1x_0^2x_1:y_1x_0x_1^2:y_1x_1^3:\; y_2x_0:y_2x_1\,]
		\end{array} 
	\end{footnotesize}
	\]
	completing the proof.
\end{proof}

Let $\widetilde{Y}$ be the subscheme cut by the $2\times 2$ minors of
\[
N_{a,b}\;=\;
\begin{pmatrix}
	z_0 & z_1 & z_3 & a^{2}\,z_4+b\,z_5\\[2pt]
	z_1 & z_2 & z_4 & a\,z_5+b^{2}\,z_6
\end{pmatrix}.
\]
For $[a:b]\in\PP^1$ denote by
\(
Y_{a,b}\coloneqq \widetilde{Y}\cap\big(\PP^6\times\{[a:b]\}\big)\subset\PP^6
\)
the fiber.

\begin{Lemma}\label{lem:Y-fibers}
	For $b\neq 0$ one has $Y_{a,b}\cong \PP\!\big(\OO_{\PP^1}(2)\oplus\OO_{\PP^1}(1)\oplus\OO_{\PP^1}(1)\big)$ i.e. $Y_{a,b}=S(2,1,1)\subset\PP^6$. Moreover $Y_{1,0}$ is the cone of vertex $[0:\cdots:0:1]$ over the surface scroll $S(2,2)\subset\{z_6=0\}\cong\PP^5$.
\end{Lemma}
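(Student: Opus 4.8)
The plan is to argue exactly as in the proof of Lemma~\ref{lem:X-fibers}: after an invertible linear change of coordinates on $\PP^6$ one recognizes the ideal of $2\times 2$ minors of $N_{a,b}$ as the ideal of a standard rational normal scroll, and then reads off the splitting type from the block structure of the matrix. I will use the classical fact that, for positive integers $\alpha_1,\dots,\alpha_k$, the $2\times 2$ minors of a $2\times(\alpha_1+\cdots+\alpha_k)$ matrix of linear forms whose columns split into $k$ consecutive catalecticant blocks of lengths $\alpha_1,\dots,\alpha_k$, involving pairwise disjoint sets of $\alpha_i+1$ variables, generate the homogeneous ideal of $S(\alpha_1,\dots,\alpha_k)\cong\PP(\OO_{\PP^1}(\alpha_1)\oplus\cdots\oplus\OO_{\PP^1}(\alpha_k))$ embedded by its complete tautological linear system.

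For $b\neq 0$ I would first scale so that $b=1$ and keep $a$ as a parameter. Introducing the new homogeneous coordinates $u\coloneqq a^{2}z_4+z_5$ and $v\coloneqq a z_5+z_6$ — the passage from $(z_0,\dots,z_6)$ to $(z_0,z_1,z_2,z_3,z_4,u,v)$ is unipotent, hence invertible for every value of $a$ — the matrix $N_{a,1}$ becomes
\[
\begin{pmatrix} z_0 & z_1 & z_3 & u\\ z_1 & z_2 & z_4 & v\end{pmatrix},
\]
whose columns form one block of length $2$ in $z_0,z_1,z_2$ and two blocks of length $1$, in $z_3,z_4$ and in $u,v$. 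This is the standard presentation of $S(2,1,1)\subset\PP^6$, which coincides with the image of the morphism
\[
\mathcal{T}_{2,1,1}\longrightarrow\PP^6,\qquad
[x_0:x_1;\,y_0:y_1:y_2]\longmapsto
[\,y_0x_0^2:y_0x_0x_1:y_0x_1^2:y_1x_0:y_1x_1:y_2x_0:y_2x_1\,],
\]
and is therefore isomorphic to $\PP(\OO_{\PP^1}(2)\oplus\OO_{\PP^1}(1)\oplus\OO_{\PP^1}(1))$. Since the coordinate change does not involve the scaling of $[a:b]$, this identifies $Y_{a,b}$ with $S(2,1,1)$ for every $b\neq 0$. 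For $[a:b]=[1:0]$ the two ``mixed'' entries specialize to $a^{2}z_4+bz_5=z_4$ and $az_5+b^{2}z_6=z_5$, so
\[
N_{1,0}=\begin{pmatrix} z_0 & z_1 & z_3 & z_4\\ z_1 & z_2 & z_4 & z_5\end{pmatrix}.
\]
Its $2\times 2$ minors involve only $z_0,\dots,z_5$ and cut out the surface scroll $S(2,2)$ inside the hyperplane $\{z_6=0\}\cong\PP^5$ (two consecutive blocks of length $2$, in $z_0,z_1,z_2$ and in $z_3,z_4,z_5$). Since the variable $z_6$ is now unconstrained, $Y_{1,0}\subset\PP^6$ is the cone over this $S(2,2)$ with vertex the coordinate point $[0:\cdots:0:1]$ — equivalently $S(2,2,0)=\PP(\OO_{\PP^1}(2)\oplus\OO_{\PP^1}(2)\oplus\OO_{\PP^1})$ with its $\OO_{\PP^1}$-section contracted — which is precisely the assertion.

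The only step that takes a little care — and the likely technical obstacle — is justifying that the $2\times 2$ minors generate the \emph{full} homogeneous ideal of the scroll rather than merely cutting it out set-theoretically; I would handle this by invoking the arithmetic Cohen--Macaulayness of rational normal scrolls, whose ideals are resolved by the Eagon--Northcott complex, exactly as is implicitly used in Lemma~\ref{lem:X-fibers}. A secondary point, needed if one wants $\widetilde{Y}\to\PP^1$ to be an honest degeneration rather than just a collection of fibers, is to check that the linear coordinate change employed for $b\neq 0$ is compatible with the specialization $b\to 0$; this is routine and entirely parallel to the argument already given for the family $\widetilde{X}$.
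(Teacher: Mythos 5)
Your proposal is correct and follows essentially the same route as the paper's proof: the same unipotent coordinate change $w_4=a^2z_4+z_5$, $w_5=az_5+z_6$ exhibiting the block structure $(2,1,1)$ for $b\neq 0$, and the same observation that $N_{1,0}$ omits $z_6$, giving the cone over $S(2,2)$ with vertex $[0:\cdots:0:1]$. Your additional remarks on the Eagon--Northcott resolution and the compatibility of the coordinate change with the specialization $b\to 0$ are sound elaborations of points the paper leaves implicit, but do not change the argument.
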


\begin{proof}
	If $b\neq 0$, scale to $b=1$ and perform the linear change
	\[
	w_4\coloneqq a^{2}z_4+z_5,\qquad w_5\coloneqq a z_5+z_6.
	\]
	Then $N_{a,1}$ becomes a matrix with blocks of lengths $2,1,1$, i.e. the standard $S(2,1,1)$. Equivalently, $Y_{a,1}$ is the image of
	\[
	\begin{footnotesize}
		\begin{array}{lccc}
			f: & \mathcal{T}_{2,1,1} & \longrightarrow  & \PP^6 \\ 
			& [x_0:x_1;\,y_0:y_1:y_2] & \longmapsto & [\,x_0^2y_0:x_0x_1y_0:x_1^2y_0:\; x_0y_1:x_1y_1:\; x_0y_2:x_1y_2\,].
		\end{array} 
	\end{footnotesize}
	\]
	If $[a:b]=[1:0]$ then
	$$
	N_{1,0}=\begin{pmatrix} z_0&z_1&z_3&z_4\\ z_1&z_2&z_4&z_5\end{pmatrix}
	$$
	and $z_6$ is free, so $Y_{1,0}$ is the cone over $S(2,2)\subset\PP^5$ with vertex $[0:\cdots:0:1]$, or equivalently the image of the map 
	\stepcounter{thm}
	\begin{equation}\label{morg}
		\begin{footnotesize}
			\begin{array}{lccc}
				g: & \mathcal{T}_{2,2,0} & \longrightarrow  & \PP^6 \\ 
				& [x_0:x_1;\,y_0:y_1:y_2] & \longmapsto & [\,x_0^2y_0:x_0x_1y_0:x_1^2y_0:\; x_0^2y_1:x_0x_1y_1:x_1^2y_1:\; y_2\,]
			\end{array} 
		\end{footnotesize}
	\end{equation}
	contracting $\{y_0 = y_1 = 0\}$ to the point $[0:\dots :0:1]$.
\end{proof}

Inside $\PP^9\times\PP^1$ consider the family of linear subspaces
\[
H_{A,B}\;=\;\{Z_1=Z_2=Z_3=Z_5=Z_6=A\,Z_7+B\,Z_8=Z_9=0\}.
\]
These yield a rational map
\[
\begin{small}
	\begin{array}{lccc}
		\mathrm{pr}: & \PP^9\times\PP^1 & \dashrightarrow  & \PP^6\times\PP^1 
	\end{array} 
\end{small}
\]
i.e a fiberwise linear projection from the center $H_{A,B}$. Restricting $\mathrm{pr}$ to the fibers of $\widetilde{X}\to\PP^1$ we obtain rational maps
\[
\mathrm{pr}_{A,B}:\ X_{A,B}\dashrightarrow Y_{A,B}.
\]
On the charts $\mathcal{U}_{A,1} = \{Z_9 = 1, B = 1\}$ of $\PP^9\times\PP^1$ and $\mathcal{V}_{A,1} = \{z_6 = 1, b = 1\}$ of $\PP^6\times\PP^1$ the map $\mathrm{pr}$ is given by 
\[
\begin{small}
	\begin{array}{lccc}
		\mathrm{pr}_{\mathcal{U}_{A,1}}: & \mathcal{U}_{A,1} & \dashrightarrow  & \mathcal{V}_{A,1} \\ 
		& ([Z],[A:1]) & \longmapsto & \big(Z_0,Z_4,Z_8,Z_5,Z_6,A\,Z_7+Z_8,A\big),
	\end{array} 
\end{small}
\]
and on the charts $\mathcal{U}_{1,B} = \{Z_9 = 1, A = 1\}$ of $\PP^9\times\PP^1$ and $\mathcal{V}_{1,B} = \{z_6 = 1, a = 1\}$ of $\PP^6\times\PP^1$ the map $\mathrm{pr}$ is given by 
\[
\begin{small}
	\begin{array}{lccc}
		\mathrm{pr}_{\mathcal{U}_{1,B}}: & \mathcal{U}_{1,B} & \dashrightarrow  & \mathcal{V}_{1,B} \\ 
		& ([Z],[1:B]) & \longmapsto & \big(Z_0,Z_4,Z_8,Z_5,Z_6,Z_7+B\,Z_8,Z_9,B\big).
	\end{array} 
\end{small}
\]

\begin{Proposition}\label{prop:pr-birational}
	For $B\neq 0$ the restriction $\mathrm{pr}_{A,B}$ yields an isomorphism $X_{A,B}\xrightarrow{\;\sim\;}Y_{A,B}$. For $[A:B]=[1:0]$ the map $\mathrm{pr}_{1,0}$ contracts
	$$
	\ell=\{y_0=y_1=0\}\cong\PP^1\subset X_{1,0}\cong S(3,3,1)
	$$
	to the vertex $[0:\cdots:0:1]$ of the cone $Y_{1,0}$, and is an isomorphism away from $\ell$. In particular, the rational map
	\[
	\pi\;:\;\widetilde{X}\dashrightarrow \widetilde{Y}
	\]
	induced by $\mathrm{pr}_{A,B}$ is a birational morphism; it is an isomorphism over $\{B\neq 0\}$ and contracts a single $\PP^1$ inside $X_{1,0}$.
\end{Proposition}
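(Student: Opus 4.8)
The plan is to verify the statement chart by chart over the base $\PP^1$, using the explicit formulas for $\mathrm{pr}$ on the charts $\mathcal U_{A,1},\mathcal U_{1,B}$ of $\PP^9\times\PP^1$ and $\mathcal V_{A,1},\mathcal V_{1,B}$ of $\PP^6\times\PP^1$, together with the scroll parametrizations of Lemma~\ref{lem:X-fibers} and Lemma~\ref{lem:Y-fibers}. The geometric content to be recovered is that, fibrewise, $\mathrm{pr}$ is the linear projection from the fibre of the scroll $X_{A,B}$ over $[x_0:x_1]=[1:0]$, which realizes the twist $\PP(\mathcal E)\cong\PP(\mathcal E\otimes\OO_{\PP^1}(-1))$ on each scroll: this is an isomorphism when $\mathcal E\otimes\OO_{\PP^1}(-1)$ is again a sum of positive line bundles, and is the contraction of a single section when the twist produces a trivial summand.

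First I would treat $B\neq 0$, taking $B=1$. By Lemma~\ref{lem:X-fibers}, after the explicit linear change of ambient coordinates recorded there, $X_{A,1}$ is the image of the closed embedding $F\colon\mathcal T_{2,1,1}\hookrightarrow\PP^9$, and by Lemma~\ref{lem:Y-fibers} $Y_{A,1}$ is the image of the closed embedding $f\colon\mathcal T_{2,1,1}\hookrightarrow\PP^6$. I would substitute the coordinates of $F$ into the chart formula for $\mathrm{pr}_{A,1}$ and check that $\mathrm{pr}_{A,1}\circ F=f$ up to a permutation of the target coordinates; since $F$ and $f$ are closed embeddings, this identifies $\mathrm{pr}_{A,1}|_{X_{A,1}}$ with an isomorphism $X_{A,1}\xrightarrow{\sim}Y_{A,1}$. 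The underlying reason is that the indeterminacy centre of $\mathrm{pr}_{A,1}$ cuts $X_{A,1}$ exactly along one fibre $F_{A,1}\cong\PP^2$ of the scroll, a Cartier divisor, so the restriction is the everywhere-defined morphism given by the residual system $\big|\,\OO_{X_{A,1}}(1)(-F_{A,1})\,\big|$; under $X_{A,1}\cong\PP(\OO(3)\oplus\OO(2)\oplus\OO(2))\cong\PP(\OO(2)\oplus\OO(1)\oplus\OO(1))$ this is the very ample tautological system of the latter bundle, re-embedding $X_{A,1}$ as $S(2,1,1)=Y_{A,1}$.

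Next I would run the same computation over $[A:B]=[1:0]$, where now $X_{1,0}\cong S(3,3,1)=\PP(\OO(3)\oplus\OO(3)\oplus\OO(1))$ is the image of the morphism $G$ of Lemma~\ref{lem:X-fibers} and $Y_{1,0}$ is the image of the morphism $g$ of Lemma~\ref{lem:Y-fibers}, which contracts $\{y_0=y_1=0\}$ to the cone point $[0:\cdots:0:1]$. Substituting $G$ into the formula for $\mathrm{pr}_{1,0}$ should give $\mathrm{pr}_{1,0}\circ G=g$ up to reordering coordinates; equivalently the restriction is cut out by the tautological system of $\PP(\OO(2)\oplus\OO(2)\oplus\OO(0))$, which has a trivial summand and so contracts precisely the section $\ell=\{y_0=y_1=0\}\cong\PP^1$ to the cone vertex of $Y_{1,0}$ and is an isomorphism away from $\ell$ — this is the contraction claim. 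To glue the fibrewise maps into a morphism $\pi\colon\widetilde X\to\widetilde Y$ I would note that $\widetilde X$ is the zero scheme of the $2\times 2$ minors of $M_{A,B}$, of the expected codimension $6$, hence Cohen--Macaulay; being equidimensional over the regular base $\PP^1$ it is flat over $\PP^1$, and since all its fibres are smooth scrolls it is a smooth morphism and $\widetilde X$ is smooth. On each chart the entries defining $\mathrm{pr}|_{\widetilde X}$ acquire a common factor — the equation of the divisor swept out by the fibres $F_{A,B}$ along which the ambient projection is undefined — and after cancelling it they have no common zero on $\widetilde X$; hence $\mathrm{pr}|_{\widetilde X}$ is a morphism. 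It is then an isomorphism over $\{B\neq 0\}$ by the first computation, hence birational, and its only positive-dimensional fibre is the $\PP^1=\ell\subset X_{1,0}$ contracted over $[1:0]$.

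The step I expect to cause the most trouble is the coordinate bookkeeping in the two substitution arguments: one must track the $A$-dependent linear automorphism of $\PP^9$ that identifies $X_{A,B}$ (defined by the minors of $M_{A,B}$ in the given coordinates) with the standard scroll and its parametrization $F$ (respectively $G$), carry out the analogous identification on the $\PP^6$-side for $N_{a,b}$ and $f$ (respectively $g$), and only then verify that $\mathrm{pr}$ intertwines the two embeddings. Once these identifications are pinned down the verification is a routine, if somewhat lengthy, polynomial substitution, and the gluing step is handled once and for all by the smoothness of $\widetilde X$.
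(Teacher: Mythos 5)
Your proposal is correct and follows essentially the same route as the paper: the paper's proof consists precisely of checking the identities $\mathrm{pr}_{0,1}\circ F=f$ and $\mathrm{pr}_{1,0}\circ G=g$ (and their analogues for general $B\neq 0$ after the linear changes of Lemmas~\ref{lem:X-fibers} and~\ref{lem:Y-fibers}), which is the core of your substitution argument. Your additional remarks --- interpreting the projection as the residual linear system realizing the twist $\PP(\Ecal)\cong\PP(\Ecal\otimes\OO_{\PP^1}(-1))$, and the flatness/smoothness discussion used to glue the fibrewise maps --- are sound elaborations of the same computation rather than a different method.
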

\begin{proof}
	One checks directly that
	\[
	\mathrm{pr}_{0,1}\circ F\,=\,f\qquad\text{and}\qquad
	\mathrm{pr}_{1,0}\circ G\,=\,g,
	\]
	where $F,f,G,g$ are the maps in the proofs of Lemma~\ref{lem:X-fibers} and Lemma~\ref{lem:Y-fibers}. The same holds for any $B\neq 0$ after the linear changes used in those proofs.
\end{proof}

Let us describe a divisor $\widetilde{\mathcal{C}}\subset\widetilde{Y}$ whose fibers are surface conic bundles, and its strict transform inside $\widetilde{X}$.

In $\Tcal_{2,1,1}=\PP\!\big(\OO_{\PP^1}(2)\oplus\OO_{\PP^1}(1)\oplus\OO_{\PP^1}(1)\big)$ consider the splitting conic bundle
\[
F_{2,1,1}\;=\; s_{00}\,y_0^2+s_{01}\,y_0y_1+s_{02}\,y_0y_2+s_{11}\,y_1^2+s_{12}\,y_1y_2+s_{22}\,y_2^2,
\]
with coefficients
\begin{align*}
	s_{00}&= a_0 x_0^4+a_1 x_0^3x_1+a_2 x_0^2x_1^2+a_3 x_0x_1^3+a_4 x_1^4,\\
	s_{01}&= b_0 x_0^3+b_1 x_0^2x_1+b_2 x_0x_1^2+b_3 x_1^3,\\
	s_{02}&= c_0 x_0^3+c_1 x_0^2x_1+c_2 x_0x_1^2+c_3 x_1^3,\\
	s_{11}&= d_0 x_0^2+d_1 x_0x_1+d_2 x_1^2,\\
	s_{12}&= g_0 x_0^2+g_1 x_0x_1+g_2 x_1^2,\\
	s_{22}&= h_0 x_0^2+h_1 x_0x_1+h_2 x_1^2.
\end{align*}
In the ambient $\PP^6$ consider the quadratic homogeneous polynomial
\begin{align*}
	Q\,:=\;& a_0 z_0^2+a_1 z_0z_1+a_2 z_1^2+a_3 z_1z_2+a_4 z_2^2+b_0 z_0z_3+b_1 z_1z_3+b_2 z_2z_3+d_0 z_3^2\\
	&+b_3 z_2z_4+d_1 z_3z_4+d_2 z_4^2+c_0 z_0z_5+c_1 z_1z_5+c_2 z_2z_5+g_0 z_3z_5+g_1 z_4z_5\\
	&+h_0 z_5^2+c_3 z_2z_6+g_2 z_4z_6+h_1 z_5z_6+h_2 z_6^2.
\end{align*}
For $b\neq 0$ the hypersurface $\{Q=0\}$ cuts the scroll $Y_{a,b}\cong\Tcal_{2,1,1}$ along a splitting conic bundle of multidegree $(4,3,3,2,2,2)$

On $Y_{1,0}$ the coordinate $z_6$ is independent (cone vertex). Pulling back by the morphism $g:\Tcal_{2,2,0}\rightarrow Y_{1,0}\subset\PP^6$ in (\ref{morg}) the equation $Q=0$ becomes a splitting conic bundle
\[
F_{1,0}\;=\; s_{00}\,y_0^2+s_{01}\,y_0y_1+s_{02}\,y_0y_2+s_{11}\,y_1^2+s_{12}\,y_1y_2+s_{22}\,y_2^2\;\subset\;\Tcal_{2,2,0},
\]
where
\stepcounter{thm}
\begin{equation}\label{eq-P6-deg-(2,2,0)}
	\begin{aligned}
		s_{00}&= a_0 x_0^4+a_1 x_0^3x_1+a_2 x_0^2x_1^2+a_3 x_0x_1^3+a_4 x_1^4,\\
		s_{01}&= b_0 x_0^4+b_1 x_0^3x_1+b_2 x_0^2x_1^2+b_3 x_0x_1^3\;+\;c_0 x_0^2x_1^2+c_1 x_0x_1^3+c_2 x_1^4,\\
		s_{02}&= c_3 x_1^2,\\
		s_{11}&= d_0 x_0^4+d_1 x_0^3x_1+d_2 x_0^2x_1^2\;+\;g_0 x_0^2x_1^2+g_1 x_0x_1^3+h_0 x_1^4,\\
		s_{12}&= g_2 x_0x_1+h_1 x_1^2,\\
		s_{22}&= h_2,
	\end{aligned}
\end{equation}
i.e. a splitting conic bundle of multidegree $(4,4,2,4,2,0)$ in $\Tcal_{2,2,0}$.

Define the family
\[
\widetilde{\mathcal{C}}\;\coloneqq\;\widetilde{Y}\cap\{Q=0\}\ \subset\ \PP^6\times\PP^1,
\qquad
\mathcal{C}_{a,b}\;\coloneqq\;\widetilde{\mathcal{C}}\cap\big(\PP^6\times\{[a:b]\}\big).
\]
If $h_2\neq 0$, the quadric $Q$ does not pass through the cone vertex $[0:\cdots:0:1]\in Y_{1,0}$, hence $\mathcal{C}_{1,0}$ avoids the vertex and the morphism $\pi:\widetilde{X}\rightarrow\widetilde{Y}$ in Proposition~\ref{prop:pr-birational} induces an isomorphism between $\widetilde{\mathcal{C}}$ and its strict transform in $\widetilde{X}$, that we will denote by $\widetilde{\mathcal{C}}^{X}$, and whose fibers we will denote by $\widetilde{\mathcal{C}}^{X}_{A,B}$. 

Summing up we constructed an inclusion of families $\widetilde{\mathcal{C}}^{X}\subset \widetilde{X}$ parametrized by $\mathbb{P}^1_{[A:B]}$ such that 
$$\widetilde{\mathcal{C}}^{X}_{A,B}\subset X_{A,B}\cong\PP\!\big(\OO_{\PP^1}(2)\oplus\OO_{\PP^1}(1)\oplus\OO_{\PP^1}(1)\big)$$
is a conic bundle of multidegree $(4,3,3,2,2,2)$ for $B\neq 0$, while 
$$\widetilde{\mathcal{C}}^{X}_{1,0}\subset X_{1,0}\cong\PP\!\big(\OO_{\PP^1}(2)\oplus\OO_{\PP^1}(2)\oplus\OO_{\PP^1}\big)$$ 
is a conic bundle of multidegree $(4,4,2,4,2,0)$.

\medskip

Now, consider the codimension five quadric $\mathcal{U} \subset \P(V^1_{4,3,3,2,2,2})$ in Section \ref{sec:433222} parametrizing unirational conic bundles $\mathcal{Q}^1 \subset \P(\mathcal{O}(2) \oplus \mathcal{O}(1) \oplus \mathcal{O}(1))$ with bitangent plane $T_p\mathcal{Q}^1=T_q\mathcal{Q}^1$ at two fixed $k$-points $p,q \in \mathcal{Q}^1(k)$. We have the following:

\begin{thm}\label{unirat-(2,1,1)-(2,2,0)}
	If the conic bundle $\widetilde{\mathcal{C}}^{X}_{0,1} \subset \PP\!\big(\OO_{\PP^1}(2)\oplus\OO_{\PP^1}(1)\oplus\OO_{\PP^1}(1)\big)$ lies in $\mathcal{U} \subset \PP(V^1_{4,3,3,2,2,2})$, then $\widetilde{\mathcal{C}}^{X}_{1,0}\subset\PP\!\big(\OO_{\PP^1}(2)\oplus\OO_{\PP^1}(2)\oplus\OO_{\PP^1}\big)$ is unirational. 
	
	Moreover, the quadric $\mathcal{U}$ specializes to a subvariety $\mathcal{U}'$ of $\PP(V^1_{4,4,2,4,2,0})$ such that the orbit $\chi(\mathcal{G}_{2,2,0} \times \mathcal{U}')$ is Zariski dense in $\PP(V^1_{4,4,2,4,2,0})$.
\end{thm}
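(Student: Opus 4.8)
The plan is to establish the two assertions in turn: the first transports the explicit bitangent $2$--section along the degeneration, the second is a Jacobian rank computation in the style of Proposition~\ref{prop:dom442420}.

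\smallskip
\noindent\emph{Step 1: the conditional unirationality.}
First I would unwind the hypothesis. By the analysis preceding \eqref{eq:reqt433222}, a conic bundle of multidegree $(4,3,3,2,2,2)$ lies in $\mathcal{U}$ precisely when, in the coordinates \eqref{eq:pol433222}, its coefficients satisfy
\[
a_0=a_1=a_3=a_4=0,\qquad b_0c_3-c_0b_3=0 .
\]
Feeding these relations into the specialization formulas \eqref{eq-P6-deg-(2,2,0)} yields the defining coefficients of $\widetilde{\mathcal{C}}^{X}_{1,0}=F_{1,0}\subset\Tcal_{2,2,0}$; one reads off that $\sigma_{0,0}$ becomes $a_2\,x_0^2x_1^2$ (a scalar times a perfect square), that $\sigma_{0,2}$ becomes $c_3\,x_1^2$, and that $F_{1,0}$ passes through $p=([1\!:\!0],[1\!:\!0\!:\!0])$ with $a_3=a_4=0$ in the normal form of Section~\ref{sec:442420}. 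Thus the first of the two relations \eqref{eq:reqt442420} holds automatically, while the second, $c_2=0$, translates into $c_3=0$ and need not be satisfied. It remains to produce a point $p'$ on the sextic model $X_6$ of $\widetilde{\mathcal{C}}^{X}_{1,0}$, lying off $\Lambda_2$, whose tangent plane contains $\Lambda_2$ — equivalently, to check that the residual coefficient $c_3$ can be absorbed by an automorphism in $\Gcal{2}{2}{0}$ carrying $F_{1,0}$ into the locus $\{a_3=a_4=c_2=0\}$, or, what amounts to the same geometric fact, that the bitangent quartic cutting $\widetilde{\mathcal{C}}^{X}_{0,1}$ degenerates inside the flat family $\widetilde{\mathcal{C}}^{X}$ to an irreducible plane quartic of arithmetic genus $0$, which over the $C_1$ field $k$ is $k$--rational. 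Either way one obtains a $k$--rational $2$--section of $\widetilde{\mathcal{C}}^{X}_{1,0}$, and Proposition~\ref{prop:TTUni442420} together with Proposition~\ref{Enr} gives its unirationality.

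\smallskip
\noindent\emph{Step 2: density of the orbit of $\mathcal{U}'$.}
The formulas \eqref{eq-P6-deg-(2,2,0)} are linear in the coefficients, hence define a linear rational map $\PVV{4}{3}{3}{2}{2}{2}\dashrightarrow\PVV{4}{4}{2}{4}{2}{0}$; I let $\mathcal{U}'$ be the closure of the image of $\mathcal{U}$. A direct inspection of that map shows its generic fibre is $3$--dimensional, spanned by the exchanges $b_2\leftrightarrow c_0$, $b_3\leftrightarrow c_1$, $d_2\leftrightarrow g_0$ visible in \eqref{eq-P6-deg-(2,2,0)}, so that $\dim\mathcal{U}'=14$; by Step~1 a general member of $\mathcal{U}'$ is a unirational conic bundle. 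For the dominance of
\[
\chi\colon \Gcal{2}{2}{0}\times\mathcal{U}'\longrightarrow\PVV{4}{4}{2}{4}{2}{0},\qquad (\varphi,\mathcal{Q}^1)\mapsto\varphi^*\mathcal{Q}^1,
\]
I would argue exactly as in Proposition~\ref{prop:dom442420}: view $\Gcal{2}{2}{0}$ via \eqref{eq:aut220} as an open subset of an affine space, restrict to $\{\alpha_{0,0}\ne0,\ \beta_{0,0}\ne0\}$ where the linear parts are invertible, fix a general $\mathcal{Q}^1\in\mathcal{U}'$, let $f_0,\dots,f_{21}$ denote the coefficients of $\varphi^*\mathcal{Q}^1$, and show that the differential of $(f_0/f_{21},\dots,f_{20}/f_{21})$, together with the tangent directions along $\mathcal{U}'$, has rank $21=\dim\PVV{4}{4}{2}{4}{2}{0}$ at $\varphi=\mathrm{id}$. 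Then $\chi$ is dominant; since its source is irreducible, $\chi(\Gcal{2}{2}{0}\times\mathcal{U}')$ is Zariski dense, and by Step~1 it is a dense locus of unirational conic bundles.

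\smallskip
\noindent\emph{Main obstacle.}
The crux is the Jacobian computation in Step~2. Since $\mathcal{U}'$ is far more degenerate than the locus $\{a_3=a_4=c_2=0\}$ used in Proposition~\ref{prop:dom442420} — it pins $\sigma_{0,0}$ to a scalar multiple of $(x_0x_1)^2$, kills $c_0,c_1$ and the leading coefficient of $\sigma_{1,2}$, and carries a quadric relation — one must check that the infinitesimal $\Gcal{2}{2}{0}$--action (the linear part on $(x_0,x_1)$ and on $(y_0,y_1)$, together with the $y_2$--shears $\delta_{1,i},\theta_{1,i}$) genuinely restores all of those vanishing coefficients, with no rank drop along the quadric $b_0c_3=c_0b_3$. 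A secondary, purely bookkeeping, difficulty in Step~1 is to pin down the correct tangency point (equivalently the explicit automorphism absorbing $c_3$), since the naive choice $p'=[0\!:\!0\!:\!0\!:\!1]$ supplies only one of the two relations \eqref{eq:reqt442420}.
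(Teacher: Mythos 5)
Your Step 2 follows the paper's route (define $\mathcal{U}'$ as the specialization of $\mathcal{U}$ inside $\PVV{4}{4}{2}{4}{2}{0}$ and check that the differential of $\chi$ at the identity has rank $21$, restricting to the locus $h_2\neq 0$), and is fine at the same level of detail as the paper. The problem is Step 1, where you have a genuine gap that you yourself half-acknowledge. You correctly observe that after specializing via \eqref{eq-P6-deg-(2,2,0)} the condition $c_2=0$ of \eqref{eq:reqt442420} becomes $c_3=0$, which does \emph{not} hold on $\mathcal{U}$, so Proposition~\ref{prop:TTUni442420} does not apply as stated; you then offer three unexecuted alternatives (find another tangency point, absorb $c_3$ by an automorphism, or degenerate the bitangent quartic in the flat family). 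None of these is likely to go through as described: Proposition~\ref{prop:TTUni442420} also requires $X_6$ to be ``otherwise general,'' whereas $\widetilde{\mathcal{C}}^{X}_{1,0}$ is highly degenerate ($s_{00}=a_2x_0^2x_1^2$, $s_{02}=c_3x_1^2$), and any automorphism of $\Tcal_{2,2,0}$ killing $c_3$ will perturb the vanishings already imposed. This is not ``purely bookkeeping.''

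The paper's actual argument sidesteps tangency entirely: under the hypotheses one has $s_{00}=a_2x_0^2x_1^2$ and $s_{02}=\tfrac{b_3}{b_0}c_0\,x_1^2$, so the hyperplane section $C=\widetilde{\mathcal{C}}^{X}_{1,0}\cap\{y_1=0\}$ is cut by $a_2x_0^2x_1^2y_0^2+\tfrac{b_3}{b_0}c_0x_1^2y_0y_2+h_2y_2^2=0$; in the affine chart $x_0=y_0=1$ this is a plane cubic with a $k$-rational double point at the origin, hence $k$-rational by projecting from that point, and it is a rational $2$-section to which Proposition~\ref{Enr} applies directly. Note this needs no $C_1$ hypothesis, whereas your route would inject one through Proposition~\ref{prop:TTUni442420}. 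To repair your proof, replace the tangent-plane strategy by an explicit rational multisection of the specialized bundle --- the section $\{y_1=0\}$ is the natural candidate and the degenerate shape of $s_{00},s_{02},s_{22}$ is exactly what makes it work. (A minor further slip: with a $3$-dimensional generic fibre of the specialization map, $\dim\mathcal{U}'$ would be $16-3=13$, not $14$; this does not affect the argument.)
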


\begin{proof}
	The quadric $\mathcal{U}$ parametrizing unirational conic bundles with a bitangent plane is given by the conditions $a_{0}=a_{1}=a_{3}=a_{4}=0$ and $c_{3}=\frac{b_{3}}{b_{0}}c_{0}$. By (\ref{eq-P6-deg-(2,2,0)}) $\widetilde{\mathcal{C}}^{X}_{1,0}$ is a splitting conic bundle of type $(4,4,2,4,2,0)$ with $s_{00}=a_{2}x_{0}^2x_{1}^2$ and $s_{02}=\frac{b_{3}}{b_{0}}c_{0}x_{1}^2$. The curve $C:= \widetilde{\mathcal{C}}^{X}_{1,0} \cap \{y_{1}=0\}$ is given by 
	$$C=\left\lbrace a_{2}x_{0}^2x_{1}^2y_{0}^2+\frac{b_{3}}{b_{0}}c_{0}x_{1}^2y_{0}y_{2}+y_{2}^2=0\right\rbrace \subset \PP(\OO_{\PP^1}(2) \oplus \OO_{\PP^1}).$$
	Setting $x_{0}=1,y_{0}=1$ we get a plane cubic curve 
	$$C'=\left\lbrace a_{2}x_{1}^2+\frac{b_{3}}{b_{0}}c_{0}x_{1}^2y_{2}+y_{2}^2=0\right\rbrace\subset \mathbb{A}^2_{k}$$ with a double point at the origin. Then $C$ yields a rational multisection of $\widetilde{\mathcal{C}}^{X}_{1,0}$, whose unirationality now follows by Proposition \ref{Enr}.
	
	Let $\mathcal{U}'$ be the locus of conic bundles of the form (\ref{eq-P6-deg-(2,2,0)}) and such that $a_{0}=a_{1}=a_{3}=a_{4}=0$ and $c_{3}=\frac{b_{3}}{b_{0}}c_{0}$. Consider now the natural action $$\chi: \mathcal{G}_{2,2,0} \times \PP(V^1_{4,4,2,4,2,0}) \mapsto \PP(V^1_{4,4,2,4,2,0})$$ induced by $(\varphi,\mathcal{Q}^1) \mapsto \varphi^{*}\mathcal{Q}^1$. Restricting to the locus with $h_2 \neq 0$ the differential of $\chi$ at the identity has full rank $21$, proving the last claim.
\end{proof}

\begin{Proposition}
	Let $\mathcal{Q}^1 \in \mathcal{U}'$ be a general conic bundle in $\PP(\OO_{\PP^1}(2) \oplus \OO_{\PP^1}(2) \oplus \OO_{\PP^1})$. Then $\mathcal{Q}^1$ is not rational over $k$.
\end{Proposition}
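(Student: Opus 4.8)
The plan is to show that the generic fibre of $\pi\colon\mathcal{Q}^1\to\PP^1$ carries a nontrivial $2$--torsion Brauer class and then to deduce non--rationality from minimality together with $K^2_{\mathcal{Q}^1}=8-d_{\mathcal{Q}^1}=0$, exactly along the lines of Propositions~\ref{prop:nosec433222}, \ref{prop:min433222}, \ref{prop:elld442420ns} and Theorems~\ref{thm:main433222}, \ref{thm:main844}. Throughout we work under the standing hypotheses and, as in Proposition~\ref{prop:elld442420ns}, we assume $k$ perfect and not algebraically closed, fixing $c\in k^\times\setminus k^{\times2}$; for $\mathcal{Q}^1$ general the relevant surface is smooth. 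By construction a member $\mathcal{Q}^1\in\mathcal{U}'$ has equation \eqref{eq-P6-deg-(2,2,0)} with $a_0=a_1=a_3=a_4=0$ and $c_3=\tfrac{b_3}{b_0}c_0$; the features we shall exploit are that $\sigma_{0,0}=a_2x_0^2x_1^2$, that $\sigma_{1,2}=g_2x_0x_1+h_1x_1^2$ has no $x_0^2$--term, and that $\sigma_{2,2}=h_2$ is a nonzero constant for $\mathcal{Q}^1$ general.

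First I would diagonalise the generic fibre. Working in the chart $x_1=1$, $t=x_0$, and applying Proposition~\ref{diag} with $y_2$ as pivot (i.e. with $\alpha_0=\sigma_{2,2}$, $\alpha_1=\sigma_{1,2}$, $\alpha_3=\sigma_{1,1}$), the generic fibre becomes congruent over $k(t)$ to $\langle\,\sigma_{2,2},\ \sigma_{2,2}\,C,\ C\,\delta_{\mathcal{Q}^1}\,\rangle$, where
\[
C:=4\,\sigma_{1,1}\sigma_{2,2}-\sigma_{1,2}^2
\]
and $\delta_{\mathcal{Q}^1}$ is the discriminant; hence the generic fibre carries the class $(-C,\,-h_2\,\delta_{\mathcal{Q}^1})\in\Br(k(t))[2]$. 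Here $C$ is a binary quartic whose leading coefficient is $4d_0h_2$ --- precisely the quantity ``$4d_0h_0-g_0^2$'' of Proposition~\ref{prop:elld442420ns}, the term $g_0^2$ having disappeared because $\sigma_{1,2}$ has no $x_0^2$--term --- while $\delta_{\mathcal{Q}^1}$ is a binary octic, and for $\mathcal{Q}^1$ general the two are coprime and neither is divisible by $x_0$ or $x_1$. Next I would compute the residues $\partial^2_p(-C,-h_2\delta_{\mathcal{Q}^1})$ as in Section~\ref{Bra}: these vanish away from the zeros of $C$ and of $\delta_{\mathcal{Q}^1}$, and at those zeros one reads off, modulo squares, $-C$ evaluated at a zero of $\delta_{\mathcal{Q}^1}$, respectively $-h_2\delta_{\mathcal{Q}^1}$ at a zero of $C$. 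For general parameters these residues are not all trivial, by the same computation as in Proposition~\ref{prop:elld442420ns}, now with the extra simplifications afforded by the shape of $\sigma_{0,0},\sigma_{0,2},\sigma_{1,2},\sigma_{2,2}$ on $\mathcal{U}'$. By the Faddeev exact sequence the class is then nonzero in $\Br(k(t))$, so $\pi\colon\mathcal{Q}^1\to\PP^1$ has no rational section.

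It remains to promote ``no section'' to ``non $k$--rational''. For $\mathcal{Q}^1$ general in $\mathcal{U}'$ the discriminant $\delta_{\mathcal{Q}^1}$ has degree $8$, is not a perfect square, and every geometrically reducible fibre has its two line components interchanged by the Galois action of the residue field (they are not individually defined over $k$); hence all fibres are irreducible over $k$ and $\rho(\mathcal{Q}^1/\PP^1)=1$. Together with the absence of a rational section this makes $\mathcal{Q}^1$ minimal, and since $K^2_{\mathcal{Q}^1}=0<1$ a minimal conic bundle surface over $\PP^1$ of this type is not $k$--rational, as used already in Theorems~\ref{thm:main433222} and~\ref{thm:main844}. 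This proves the statement on a Zariski dense subset of $\mathcal{U}'$.

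The step I expect to be the real obstacle is the residue computation in the second paragraph. One must check that imposing the relations cutting out $\mathcal{U}'$ --- in particular $\sigma_{0,0}=a_2x_0^2x_1^2$, the vanishing of the $x_0^2$--coefficient of $\sigma_{1,2}$, and $c_3=\tfrac{b_3}{b_0}c_0$ --- does not degenerate $\delta_{\mathcal{Q}^1}$ (make it drop degree, become a square, or acquire a $k$--rational factor yielding a section) and does not push $\mathcal{U}'$ into the locus where every residue of $(-C,-h_2\delta_{\mathcal{Q}^1})$ vanishes. A direct substitution into $\delta_{\mathcal{Q}^1}$ and into $C$ settles this for general values of the remaining parameters: among these $d_0$ and $h_2$ are unconstrained, so $4d_0h_2\notin k^{\times2}$ on a Zariski dense locus (both $k^{\times2}$ and $c\,k^{\times2}$ being Zariski dense in $\mathbb{A}^1$ since $k$ is infinite), which is what drives the nonvanishing. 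Everything else follows templates already present in the paper.
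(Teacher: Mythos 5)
Your overall architecture coincides with the paper's: put the generic fibre in diagonal form via Proposition~\ref{diag}, exhibit a nontrivial residue of the resulting $2$--torsion class as in Section~\ref{Bra}, conclude that $\pi$ has no rational section, and (a step the paper's own proof actually elides) pass from minimality and $K^2=0$ to non--rationality. The paper implements this by writing the diagonal model explicitly as $a(t)x^2+b(t)y^2=z^2$ with $a=a_2t^2h_0^4/(P_1P_2)$ and $b=a_2t^2h_0^2/P_2$, where $P_2$ is proportional to the discriminant octic and $P_1=4\sigma_{0,0}\sigma_{1,1}-\sigma_{0,1}^2$, and then computing the residue at the divisor $\{P_2=0\}$ to be the explicit class $h_0^3/(Q_1t^5+\cdots+Q_6)$ in the residue field, which it checks is not a square for general parameters.

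The gap in your proposal sits exactly at that point. You correctly reduce to showing that some residue of your class $(-C,-h_2\delta_{\mathcal{Q}^1})$ is nontrivial, but the quantity you name as ``what drives the nonvanishing'', $4d_0h_2\notin k^{\times 2}$, is the leading coefficient of the quartic $C=4\sigma_{1,1}\sigma_{2,2}-\sigma_{1,2}^2$, and that coefficient is not a residue of your class at any place: both entries of the symbol have even valuation at $t=\infty$ (since $\deg C=4$ and $\deg\delta_{\mathcal{Q}^1}=8$) and even valuation at $t=0$, so those residues vanish identically, and the residues that can obstruct a section live at the closed points cut out by the irreducible factors of $C$ and of $\delta_{\mathcal{Q}^1}$, i.e.\ in $k(p)^{\times}/k(p)^{\times 2}$ for nontrivial finite extensions $k(p)/k$. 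Non--squareness there is not implied by non--squareness of a leading coefficient in $k$; it has to be verified by actually reducing the relevant polynomial modulo the factor in question, which is what the paper's computation of $Q_1,\dots,Q_6$ accomplishes. Deferring to ``the same computation as in Proposition~\ref{prop:elld442420ns}'' does not close this, since that proposition concerns a general member of $\PVV{4}{4}{2}{4}{2}{0}$ and the quantity $4d_0h_0-g_0^2$ it invokes specializes, on $\mathcal{U}'$, to precisely the leading coefficient whose relevance is in doubt. So the plan is the right one, and your minimality paragraph is a genuine improvement in rigor over the paper's one--line conclusion, but the decisive non--vanishing is asserted rather than proved, and the mechanism you offer for it does not work as stated.
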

\begin{proof}
	After a linear change of coordinates we can put $\mathcal{Q}^1$ in the form $ax^2+bx^2=z^2$, with $a,b$ non zero rational function over $\PP^1$, as in Section \ref{Bra}. The conditions on the coefficients of $\mathcal{Q}^1$ imposed by $\mathcal{U}'$ reads
	\begin{align*}
		a(t)&=\frac{a_{2}t^2h_{0}^4}{P_{1}(t)P_{2}(t)}, \\
		b(t)&=\frac{a_{2}t^2h_{0}^2}{P_{2}(t)},
	\end{align*}
	with
	\begingroup\small
	\[
	\begin{aligned}
		P_1(t)&=-b_{0}^{2} t^{8}
		- 2 b_{0} b_{1} t^{7}
		+ 4 a_{2} d_{0} t^{6}
		- 2 b_{0} b_{2} t^{6}
		- b_{1}^{2} t^{6}
		+ 4 a_{2} d_{1} t^{5}
		- 2 b_{0} b_{3} t^{5}\\
		&\quad - 2 b_{1} b_{2} t^{5}
		+ 4 a_{2} d_{2} t^{4}
		- 2 b_{0} b_{4} t^{4}
		- 2 b_{1} b_{3} t^{4}
		- b_{2}^{2} t^{4}
		+ 4 a_{2} d_{3} t^{3}
		- 2 b_{1} b_{4} t^{3}\\
		&\quad - 2 b_{2} b_{3} t^{3}
		+ 4 a_{2} d_{4} t^{2}
		- 2 b_{2} b_{4} t^{2}
		- b_{3}^{2} t^{2}
		- 2 b_{3} b_{4} t
		- b_{4}^{2};
	\end{aligned}
	\]
	\endgroup
	
	\begingroup\small
	\[
	\begin{aligned}
		P_2(t)&=-a_{2} g_{1}^{2} t^{4}
		- a_{2} g_{2}^{2} t^{2}
		- b_{0}^{2} h_{0} t^{8}
		- b_{1}^{2} h_{0} t^{6}
		- b_{2}^{2} h_{0} t^{4}
		- b_{3}^{2} h_{0} t^{2}
		+ b_{4} c_{2} g_{2}
		- c_{2}^{2} d_{0} t^{4}
		- c_{2}^{2} d_{1} t^{3}
		- c_{2}^{2} d_{2} t^{2}
		- c_{2}^{2} d_{3} t
		\\
		&\quad+ 4 a_{2} d_{0} h_{0} t^{6}
		+ 4 a_{2} d_{1} h_{0} t^{5}
		+ 4 a_{2} d_{2} h_{0} t^{4}
		+ 4 a_{2} d_{3} h_{0} t^{3}
		+ 4 a_{2} d_{4} h_{0} t^{2}
		- 2 a_{2} g_{1} g_{2} t^{3}
		\\
		&\quad- 2 b_{0} b_{1} h_{0} t^{7}
		- 2 b_{0} b_{2} h_{0} t^{6}
		- 2 b_{0} b_{3} h_{0} t^{5}
		- 2 b_{1} b_{2} h_{0} t^{5}
		- 2 b_{0} b_{4} h_{0} t^{4}
		- 2 b_{1} b_{3} h_{0} t^{4}
		\\
		&\quad- 2 b_{1} b_{4} h_{0} t^{3}
		- 2 b_{2} b_{3} h_{0} t^{3}
		- 2 b_{2} b_{4} h_{0} t^{2}
		- 2 b_{3} b_{4} h_{0} t
		+ b_{0} c_{2} g_{1} t^{5}
		+ b_{0} c_{2} g_{2} t^{4}
		\\
		&\quad + b_{1} c_{2} g_{1} t^{4}
		+ b_{1} c_{2} g_{2} t^{3}
		+ b_{2} c_{2} g_{1} t^{3}
		+ b_{2} c_{2} g_{2} t^{2}
		+ b_{3} c_{2} g_{1} t^{2}
		+ b_{3} c_{2} g_{2} t
		\\
		&\quad + b_{4} c_{2} g_{1} t
		- c_{2}^{2} d_{4}
		- b_{4}^{2} h_{0}.
	\end{aligned}
	\]
	\endgroup

	The residue at the divisor defined by $P_{2}(t)$ is
	\[
	\frac{h_0^3}{Q_1t^{5}
		+Q_2t^{4}
		+Q_3t^{3}
		+Q_4t^{2}
		+Q_5t
		+Q_6}
	\]
	where 
	\begingroup
	\[
	\begin{aligned}
		Q_1&=b_{0}c_{2}g_{1};\\
		Q_2&=- a_{2}g_{1}^{2};\\
		Q_3&=- 2a_{2}g_{1}g_{2};\\
		Q_4&=- a_{2}g_{2}^{2};\\
		Q_5&=b_{3}b_{4}c_{2}^{2}g_{2}^{2}
		+ b_{4}^{2}c_{2}^{2}g_{1}g_{2}
		- b_{4}c_{2}^{3}d_{3}g_{2};\\
		Q_6&=+ b_{0}c_{2}g_{2}
		+ b_{1}c_{2}g_{1}
		+ b_{1}c_{2}g_{2}
		+ b_{2}c_{2}g_{1}
		+ b_{2}c_{2}g_{2}
		+ b_{3}c_{2}g_{1}- c_{2}^{2}d_{0}
		- c_{2}^{2}d_{1}
		- c_{2}^{2}d_{2}
		- c_{2}^{2}d_{4};
	\end{aligned}
	\]
	
	\endgroup
	
	which is not a square in $k$. Then $\mathcal{Q}^1$ does not admit a rational section and hence is not rational over $k$.
\end{proof}

\subsubsection{Degenerating $\mathcal{T}_{2,2,0}$ to $\mathcal{T}_{3,1,0}$}

For $[a:b]\in\mathbb{P}^1$, consider the threefold
\[
S_{a,b}\;\subset\;\mathbb{P}^6
\]
defined by the $2\times 2$ minors of the $2\times 4$ matrix
\[
M_{a,b}\;=\;
\begin{pmatrix}
	z_0 & z_1 & a\,z_2+b\,z_3 & z_4\\[2pt]
	z_1 & z_2 & a\,z_3+b\,z_4 & z_5
\end{pmatrix}.
\]
Note that $z_6$ does not occur in $M_{a,b}$; thus $S_{a,b}$ is a cone in $\mathbb{P}^6$ with vertex $[0:\cdots:0:1]$ over a surface in the hyperplane $\{z_6=0\}\cong\mathbb{P}^5$.

For $[a:b]\neq[1:0]$ we have $S_{a,b}\ \cong\ \widehat{S(2,2)}$, the cone in $\mathbb{P}^6$ over $S(2,2)\cong\mathbb{P}\!\big(\mathcal{O}_{\mathbb{P}^1}(2)\oplus\mathcal{O}_{\mathbb{P}^1}(2)\big)\subset\mathbb{P}^5$.
At the special point $[a:b]=[1:0]$ we have
\[
M_{1,0}=\begin{pmatrix} z_0&z_1&z_2&z_4\\ z_1&z_2&z_3&z_5\end{pmatrix},
\]
whose $2\times 2$ minors cut the surface scroll $S(3,1)\cong\mathbb{P}(\mathcal{O}(3)\oplus\mathcal{O}(1))\subset\mathbb{P}^5$; thus $S_{1,0}\ \cong\ \widehat{S(3,1)}$ is the cone in $\mathbb{P}^6$ over $S(3,1)$.

In the threefold scroll
\[
\mathcal{T}_{2,2,0}\;=\;\mathbb{P}\!\big(\mathcal{O}_{\mathbb{P}^1}(2)\oplus\mathcal{O}_{\mathbb{P}^1}(2)\oplus\mathcal{O}_{\mathbb{P}^1}\big)
\]
with Cox coordinates $(x_0,x_1;y_0,y_1,y_2)$ consider the splitting conic bundle defined by
\[
F\;=\; s_{00}\,y_0^2+s_{01}\,y_0y_1+s_{02}\,y_0y_2+s_{11}\,y_1^2+s_{12}\,y_1y_2+s_{22}\,y_2^2,
\]
with 
{\setlength{\jot}{1pt}\begin{align*}
		s_{00}&:= a_0x_0^4+a_1x_0^3x_1+a_2x_0^2x_1^2+a_3x_0x_1^3+a_4x_1^4,\\
		s_{01}&:= b_0x_0^4+b_1x_0^3x_1+b_2x_0^2x_1^2+b_3x_0x_1^3+b_4x_1^4,\\
		s_{02}&:= c_0x_0^2+c_1x_0x_1+c_2x_1^2,\\
		s_{11}&:= d_0x_0^4+d_1x_0^3x_1+d_2x_0^2x_1^2+d_3x_0x_1^3+d_4x_1^4,\\
		s_{12}&:= g_0x_0^2+g_1x_0x_1+g_2x_1^2,\\
		s_{22}&:= h_0.
\end{align*}}
In the ambient $\mathbb{P}^6$ consider the quadratic polynomial
\begin{align*}
	G\;:=\;& a_0z_0^2+a_1z_0z_1+a_2z_1^2+a_3z_1z_2+a_4z_2^2
	+ b_0z_0z_3+b_1z_1z_3+b_2z_2z_3+b_3z_2z_4+b_4z_2z_5\\
	&+ c_0z_0z_6+c_1z_1z_6+c_2z_2z_6
	+ d_0z_3^2+d_1z_3z_4+d_2z_4^2+d_3z_4z_5+d_4z_5^2\\
	&+ g_0z_3z_6+g_1z_4z_6+g_2z_5z_6
	+ h_0z_6^2.
\end{align*}
For any $[a:b]$, the hypersurface $\{G=0\}$ restricts to a surface in $S_{a,b}$. When $[a:b]\neq[1:0]$ (so $S_{a,b}\cong \widehat{S(2,2)}$), this surface corresponds, in the bundle coordinates $(x_0,x_1;y_0,y_1,y_2)$ of $\mathcal{T}_{2,2,0}$, to the conic bundle $F=0$ above.

At $[a:b]=[1:0]$ the same polynomial $G$ cuts a surface in $S_{1,0}\cong\widehat{S(3,1)}$. Consider the morphism
\[
g:\ \mathcal{T}_{3,1,0}=\mathbb{P}\!\big(\mathcal{O}_{\mathbb{P}^1}(3)\oplus\mathcal{O}_{\mathbb{P}^1}(1)\oplus\mathcal{O}_{\mathbb{P}^1}\big)\ \longrightarrow\ S_{1,0}\ \subset\ \mathbb{P}^6
\]
given in Cox coordinates by
\[
g([x_0:x_1;y_0:y_1:y_2])\;=\;[\,x_0^3y_0:\ x_0^2x_1y_0:\ x_0x_1^2y_0:\ x_1^3y_0:\ x_0y_1:\ x_1y_1:\ y_2\,].
\]
Then $g^*(G)=0$ is a splitting conic bundle in $\mathcal{T}_{3,1,0}$ defined by
\[
F_{1,0}\;=\; s_{00}\,y_0^2+s_{01}\,y_0y_1+s_{02}\,y_0y_2+s_{11}\,y_1^2+s_{12}\,y_1y_2+s_{22}\,y_2^2,
\]
with coefficients
{\setlength{\jot}{1pt}
\stepcounter{thm}
	\begin{equation}\label{deg-(2,2,0-(3,1,0)}
		\begin{aligned}
			s_{00}&:= a_0x_0^6+a_1x_0^5x_1+a_2x_0^4x_1^2+a_3x_0^3x_1^3+a_4x_0^2x_1^4
			+ b_0x_0^3x_1^3+b_1x_0^2x_1^4+b_2x_0x_1^5+d_0x_1^6,\\
			s_{01}&:= b_3x_0^2x_1^2+b_4x_0x_1^3+d_1x_0x_1^3,\\
			s_{02}&:= c_0x_0^3+c_1x_0^2x_1+c_2x_0x_1^2+g_0x_1^3,\\
			s_{11}&:= d_2x_0^2+d_3x_0x_1+d_4x_1^2,\\
			s_{12}&:= g_1x_0+g_2x_1,\\
			s_{22}&:= h_0.
		\end{aligned}
	\end{equation}
}

Let $\mathbb{P}_{a,b}(2,2,0)$ denote the family obtained by identifying $S_{a,b}\cong\widehat{S(2,2)}$ for $[a:b]\neq[1:0]$ and specializing to $\widehat{S(3,1)}$ for $b\to 0$. Inside it, let
\[
\mathcal{C}_{a,b}(4,4,0)\;\subset\;\mathbb{P}_{a,b}(2,2,0)
\]
be the family of splitting conic bundles cut by the single quadratic equation $G=0$ on $S_{a,b}$. Then, as $b\to 0$ the family $\mathcal{C}_{a,b}(4,4,0)$ specialized to a splitting conic bundle of multidegree $(6,4,3,2,1,0)$ in $\widehat{S(3,1)}$. Finally, to construct a family whose general and special fibers are actually isomorphic to $\mathbb{P}(2,2,0)$ and $\mathbb{P}(3,1,0)$ respectively, one embeds the projective bundles in $\mathbb{P}^9$, and takes the pull back of the family we constructed via a suitable relative projection as in Section \ref{(2,1,1)->(2,2,0)}.

\begin{thm}
	Let $\mathcal{U} \subset \PP(V^1_{4,4,2,4,2,0})$ be the linear subspace given by $a_{3}=a_{4}=c_{2}=0$. A general $\mathcal{C}_{0,1} \in \mathcal{U}$ is unirational and its specialization $\mathcal{C}_{1,0} \subset \PP(\OO_{\PP^1}(3) \oplus \OO_{\PP^1}(1) \oplus \OO_{\PP^1})$ is unirational as well. 
\end{thm}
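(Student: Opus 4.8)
The plan is to match the two assertions to results already established: the unirationality of the general fiber $\mathcal{C}_{0,1}$ is an instance of Proposition~\ref{prop:TTUni442420}, while that of the special fiber $\mathcal{C}_{1,0}$ follows from Proposition~\ref{prop:case643210}. The only real work is to check, through the coefficient substitution \eqref{deg-(2,2,0-(3,1,0)}, that imposing $a_3=a_4=c_2=0$ is compatible with the genericity hypothesis of each of these propositions.

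\emph{The general fiber.} By construction $\mathcal{C}_{0,1}\subset\Tcal_{2,2,0}$ is the splitting conic bundle of multidegree $(4,4,2,4,2,0)$ with coefficients $s_{00},\dots,s_{22}$ as displayed, and imposing $a_3=a_4=c_2=0$ places its parameter point exactly in the linear subspace $\mathcal{U}\subset\PP(V^1_{4,4,2,4,2,0})$ of Section~\ref{sec:442420}. Indeed, in the notation there $a_4=0$ says that $\mathcal{C}_{0,1}$ passes through $p=([1:0],[1:0:0])$, and $a_3=c_2=0$ is precisely the condition \eqref{eq:reqt442420} asserting $\Lambda_2\subset T_{p'}\phi(\mathcal{C}_{0,1})$. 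Since $k$ is a $C_1$ field, Proposition~\ref{prop:TTUni442420} applies, and a general $\mathcal{C}_{0,1}\in\mathcal{U}$ is unirational.

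\emph{The special fiber.} Setting $a_3=a_4=c_2=0$ in \eqref{deg-(2,2,0-(3,1,0)}, the specialization $\mathcal{C}_{1,0}\subset\Tcal_{3,1,0}$ of multidegree $(6,4,3,2,1,0)$ has
\[
s_{11}=d_2x_0^2+d_3x_0x_1+d_4x_1^2,\qquad s_{12}=g_1x_0+g_2x_1,\qquad s_{22}=h_0,
\]
and — the key point — the parameters $d_2,d_3,d_4,g_1,g_2,h_0$ appearing here are unconstrained by the relations $a_3=a_4=c_2=0$, hence remain general. One can thus run the proof of Proposition~\ref{prop:case643210} unchanged: the hyperplane section $\mathcal{C}_{1,0}\cap\{y_0=0\}=\{s_{11}y_1^2+s_{12}y_1y_2+s_{22}y_2^2=0\}$ is, for general such parameters, an irreducible curve whose discriminant $s_{12}^2-4s_{11}s_{22}$ has degree two with distinct roots, hence is birational to a smooth conic over $k$; as $k$ is $C_1$ this conic has a $k$-point, so it is $k$-rational. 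It is a rational $2$-section of $\pi\colon\mathcal{C}_{1,0}\to\PP^1$, and Proposition~\ref{Enr} gives the unirationality of $\mathcal{C}_{1,0}$.

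Since each of the two ``general member'' conclusions holds on a dense open subset of $\mathcal{U}$, they hold simultaneously on the (still dense) intersection, as required. I do not expect any serious obstacle here: the entire content is the coefficient bookkeeping just described. The one step deserving a line of care is that the special fiber of the $\PP^9$--model used to rigidify the degeneration is genuinely isomorphic to the conic bundle $F_{1,0}=0$ inside $\Tcal_{3,1,0}$, rather than merely to its image in the cone $\widehat{S(3,1)}$; this holds precisely when the quadric $G$ misses the cone vertex $[0:\cdots:0:1]$, i.e.\ when $h_0\ne 0$, by the same argument as in Section~\ref{(2,1,1)->(2,2,0)}, and $h_0\ne 0$ is part of the genericity assumption.
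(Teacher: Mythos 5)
Your proposal is correct and follows essentially the same route as the paper: the general fiber is handled by Proposition~\ref{prop:TTUni442420} (noting that $a_3=a_4=c_2=0$ is exactly the tangency condition \eqref{eq:reqt442420}), and the special fiber by observing from \eqref{deg-(2,2,0-(3,1,0)} that $s_{11},s_{12},s_{22}$ do not involve $a_3,a_4,c_2$, so the rational multisection of Proposition~\ref{prop:case643210} survives the specialization. Your extra remark about $h_0\neq 0$ ensuring the quadric misses the cone vertex is a detail the paper leaves implicit, but it is consistent with its construction.
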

\begin{proof}
	The first claim follows from Propositions \ref{prop:TTUni442420} and \ref{prop:dom442420}. For the second claim note that by (\ref{deg-(2,2,0-(3,1,0)}) $s_{11},s_{12},s_{22}$ do not involve the coefficients $a_{3},a_{4},c_{2}$. Since by Proposition \ref{prop:case643210} a conic bundle $\mathcal{Q}^1 \subset \PP(\OO_{\PP^1}(3) \oplus \OO_{\PP^1}(1) \oplus \OO_{\PP^1})$ admits a rational multisection as long as $s_{12}^2-s_{11}s_{22} \neq 0$, we have that $\mathcal{C}_{1,0} \subset \PP(\OO_{\PP^1}(3) \oplus \OO_{\PP^1}(1) \oplus \OO_{\PP^1})$ is unirational.
\end{proof}

\subsubsection{Degenerating $\mathcal{T}_{3,1,0}$ to $\mathcal{T}_{4,0,0}$}

For $[a:b]\in\mathbb{P}^1$, consider the threefold
\[
S_{a,b}\;\subset\;\mathbb{P}^6
\]
defined by the $2\times 2$ minors of the $2\times 4$ matrix
\[
M_{a,b}\;=\;
\begin{pmatrix}
	z_0 & z_1 & z_2 & a\,z_3+b\,z_4\\[2pt]
	z_1 & z_2 & z_3 & a\,z_4+b\,z_5
\end{pmatrix}.
\]
Note that $z_6$ does not appear in $M_{a,b}$, so $S_{a,b}$ is a cone inside $\mathbb{P}^6$.

If $[a:b]\neq[1:0]$ we have that $S_{a,b}\ \cong\ \widehat{S(3,1)}$ is a cone in $\mathbb{P}^6$ over the scroll$S(3,1)\subset\mathbb{P}^5$ with vertex the point $[0:\cdots:0:1]$. At the special point $[a:b]=[1:0]$ we have
\[
M_{1,0}=\begin{pmatrix} z_0 & z_1 & z_2 & z_3\\ z_1 & z_2 & z_3 & z_4 \end{pmatrix},
\]
so the $2\times 2$ minors involve only $z_0,\dots,z_4$. Therefore $S_{1,0}$ is a cone whose vertex is the line
\[
L=\{z_0=\cdots=z_4=0\}\ \cong\ \mathbb{P}^1\subset\mathbb{P}^6,
\]
over the rational normal quartic curve in $\mathbb{P}^4=\{z_5=z_6=0\}$ given by the minors of $M_{1,0}$.

On the threefold scroll
\[
\mathcal{T}_{3,1,0}=\mathbb{P}\!\big(\mathcal{O}_{\mathbb{P}^1}(3)\oplus\mathcal{O}_{\mathbb{P}^1}(1)\oplus\mathcal{O}_{\mathbb{P}^1}\big)
\]
use Cox coordinates $(x_0,x_1;y_0,y_1,y_2)$ and consider the splitting conic bundle defined by
\[
F\;=\; s_{00}\,y_0^2+s_{01}\,y_0y_1+s_{02}\,y_0y_2+s_{11}\,y_1^2+s_{12}\,y_1y_2+s_{22}\,y_2^2,
\]
with
{\setlength{\jot}{1pt}\begin{align*}
		s_{00}&:= a_0x_0^6+a_1x_0^5x_1+a_2x_0^4x_1^2+a_3x_0^3x_1^3+a_4x_0^2x_1^4+a_5x_0x_1^5+a_6x_1^6,\\
		s_{01}&:= b_0x_0^4+b_1x_0^3x_1+b_2x_0^2x_1^2+b_3x_0x_1^3+b_4x_1^4,\\
		s_{02}&:= c_0x_0^3+c_1x_0^2x_1+c_2x_0x_1^2+c_3x_1^3,\\
		s_{11}&:= d_0x_0^2+d_1x_0x_1+d_2x_1^2,\\
		s_{12}&:= g_0x_0+g_1x_1,\\
		s_{22}&:= h_0.
\end{align*}}
In the ambient $\mathbb{P}^6$ consider the quadratic polynomial
\begin{align*}
	G\;:=\;& a_0z_0^2+a_1z_0z_1+a_2z_1^2+a_3z_1z_2+a_4z_2^2+a_5z_2z_3+a_6z_3^2 \\
	&+ b_0z_0z_4+b_1z_1z_4+b_2z_2z_4+b_3z_3z_4+b_4z_3z_5 \\
	&+ c_0z_0z_6+c_1z_1z_6+c_2z_2z_6+c_3z_3z_6 \\
	&+ d_0z_4^2+d_1z_4z_5+d_2z_5^2 \\
	&+ g_0z_4z_6+g_1z_5z_6 + h_0z_6^2.
\end{align*}
For every $[a:b]$, the hypersurface $\{G=0\}$ intersects $S_{a,b}$ in a surface. When $[a:b]\neq[1:0]$ in the bundle coordinates of $\mathcal{T}_{3,1,0}$ this surface corresponds to the conic bundle $F=0$ above.

At $[a:b]=[1:0]$ the same polynomial $G$ cuts in $S_{1,0}$ a surface. Consider the morphism
\[
g:\ \mathcal{T}_{4,0,0}=\mathbb{P}\!\big(\mathcal{O}_{\mathbb{P}^1}(4)\oplus\mathcal{O}_{\mathbb{P}^1}\oplus\mathcal{O}_{\mathbb{P}^1}\big)\ \longrightarrow\ S_{1,0}\ \subset\ \mathbb{P}^6
\]
given in Cox coordinates by
\[
g([x_0:x_1;y_0:y_1:y_2])\;=\;[\,x_0^4y_0:\ x_0^3x_1y_0:\ x_0^2x_1^2y_0:\ x_0x_1^3y_0:\ x_1^4y_0:\ y_1:\ y_2\,].
\]
Then $g^*(G)=0$ is a splitting conic bundle in $\mathcal{T}_{4,0,0}$, defined by
\[
F_{1,0}\;=\; s_{00}\,y_0^2+s_{01}\,y_0y_1+s_{02}\,y_0y_2+s_{11}\,y_1^2+s_{12}\,y_1y_2+s_{22}\,y_2^2,
\]
with 
{\setlength{\jot}{1pt}
\stepcounter{thm}
	\begin{equation}\label{deg-(3,1,0)-(4,0,0)}
		\begin{aligned}
			s_{00}&:= a_0x_0^8+a_1x_0^7x_1+a_2x_0^6x_1^2+a_3x_0^5x_1^3+a_4x_0^4x_1^4+a_5x_0^3x_1^5+a_6x_0^2x_1^6\\
			& + b_0x_0^4x_1^4+b_1x_0^3x_1^5+b_2x_0^2x_1^6+b_3x_0x_1^7+d_0x_1^8,\\
			s_{01}&:= b_4x_0x_1^3+d_1x_1^4,\\
			s_{02}&:= c_0x_0^4+c_1x_0^3x_1+c_2x_0^2x_1^2+c_3x_0x_1^3+g_0x_1^4,\\
			s_{11}&:= d_2,\\
			s_{12}&:= g_1,\\
			s_{22}&:= h_0.
		\end{aligned}
\end{equation}}
Let $\mathbb{P}_{a,b}(3,1,0)$ denote the family obtained by identifying $S_{a,b}\cong\widehat{S(3,1)}$ for $[a:b]\neq[1:0]$ and specializing, as $b\to 0$, to the cone with line vertex over the rational normal quartic. Let
\[
\mathcal{C}_{a,b}(3,1,0)\;\subset\;\mathbb{P}_{a,b}(3,1,0)
\]
be the corresponding family of splitting conic bundles cut by the quadratic equation $G=0$ on $S_{a,b}$. Then, as $b\to 0$, $\mathcal{C}_{a,b}(3,1,0)$ specialized to a splitting conic bundle of multidegree $(8,4,4,0,0,0)$ in $\mathcal{T}_{4,0,0}$. Finally, to construct a family whose general and special fibers are actually isomorphic to $\mathbb{P}(3,1,0)$ and $\mathbb{P}(4,0,0)$ respectively, one embeds the projective bundles in $\mathbb{P}^9$, and takes the pull back of the family we constructed via a suitable relative projection as in Section \ref{(2,1,1)->(2,2,0)}.

\begin{thm}\label{(3,1,0-(4,0,0)}
	Let $\mathcal{C}_{0,1} \subset \PP(\OO_{\PP^1}(3) \oplus \OO_{\PP^1}(1) \oplus \OO_{\PP^1})$ be a unirational conic bundle with $s_{12}^2-s_{11}s_{22} \neq 0$ and $(a_0h_0-c_0^2)d_0-g_0^2a_0 \in (k^*)^2$. Then the conic bundle $\mathcal{C}_{1,0} \subset \PP(\OO_{\PP^1}(4) \oplus \OO_{\PP^1} \oplus \OO_{\PP^1})$ is also unirational.
\end{thm}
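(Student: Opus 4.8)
The plan is to verify that the special fibre $\mathcal{C}_{1,0}$ meets the hypotheses of Proposition~\ref{prop:Me844000}, so that Mestre's construction produces a unirational parametrisation. First I would read off from the explicit equations \eqref{deg-(3,1,0)-(4,0,0)} that $\mathcal{C}_{1,0}\subset\Tcal_{4,0,0}$ is a splitting conic bundle of type $(4,0,0)$ whose three constant coefficients are $\sigma_{1,1}=d_2$, $\sigma_{1,2}=g_1$, $\sigma_{2,2}=h_0$; in the notation of Proposition~\ref{prop:Me844000} this reads $c_0=d_2$, $c_1=g_1$, $c_2=h_0$. I would also record the special shape of the remaining coefficients: $\sigma_{0,1}=x_1^{3}(b_4x_0+d_1x_1)$ is divisible by $x_1^{3}$, the coefficient of $x_0^{8}$ (resp. $x_1^{8}$) in $\sigma_{0,0}$ is $a_0$ (resp. $d_0$), and the coefficient of $x_0^{4}$ (resp. $x_1^{4}$) in $\sigma_{0,2}$ is $c_0$ (resp. $g_0$). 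The non-degeneracy hypotheses of Proposition~\ref{prop:Me844000}, namely $c_2=h_0\in k^{\times}$ and $c_1^{2}-4c_0c_2=g_1^{2}-4d_2h_0\in k^{\times}$, are nonempty open conditions on the parameters of $\mathcal{C}_{0,1}$, and I would argue that they are secured by the standing assumption $s_{12}^{2}-s_{11}s_{22}\neq 0$ together with $\mathcal{C}_{0,1}$ being otherwise general.

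The core of the argument is the square condition on the leading coefficient of $-\tfrac{4}{c_2}\,\delta_{\mathcal{C}_{1,0}}$. Applying Proposition~\ref{diag} with $(\alpha_0,\dots,\alpha_5)=(\sigma_{0,0},\sigma_{0,1},\sigma_{0,2},\sigma_{1,1},\sigma_{1,2},\sigma_{2,2})$, and using that $\sigma_{1,1},\sigma_{1,2},\sigma_{2,2}$ are constants, the discriminant octic is
\[
\delta_{\mathcal{C}_{1,0}}
=\Big(\sigma_{1,1}\sigma_{2,2}-\tfrac14\sigma_{1,2}^{2}\Big)\sigma_{0,0}
-\tfrac14\sigma_{2,2}\,\sigma_{0,1}^{2}
+\tfrac14\sigma_{1,2}\,\sigma_{0,1}\sigma_{0,2}
-\tfrac14\sigma_{1,1}\,\sigma_{0,2}^{2}.
\]
Since $\sigma_{0,1}$ is divisible by $x_1^{3}$ and $\sigma_{0,2}$ has $x_0$-degree $4$, the terms $\sigma_{0,1}^{2}$ and $\sigma_{0,1}\sigma_{0,2}$ cannot reach the extremal monomials of this octic, which makes the relevant leading coefficients manageable. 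Feeding in the coefficients of \eqref{deg-(3,1,0)-(4,0,0)} (after, if needed, the harmless automorphism of $\Tcal_{4,0,0}$ from \eqref{eq:aut800} normalising $\sigma_{0,1}$ and mixing the extremal data of $\sigma_{0,0}$ and $\sigma_{0,2}$), one should find that in the trivialisation of $\PP^{1}$ adapted to the degeneration the leading coefficient of $-\tfrac{4}{c_2}\,\delta_{\mathcal{C}_{1,0}}$ equals, up to a nonzero square of $k$, the quantity $(a_0h_0-c_0^{2})d_0-g_0^{2}a_0$. By hypothesis this is a nonzero square, so the last hypothesis of Proposition~\ref{prop:Me844000} is met, and that proposition yields the unirationality of $\mathcal{C}_{1,0}$.

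The hard part will be precisely this discriminant bookkeeping: one has to track how the coefficients of $\mathcal{C}_{0,1}$ get redistributed among the $\sigma_{i,j}$ of $\mathcal{C}_{1,0}$ under the pull-back along the morphism $\Tcal_{4,0,0}\to S_{1,0}$ defining the degeneration, and then pin down the trivialisation of the base $\PP^{1}$ (equivalently, the point of $\PP^{1}$ thrown to infinity) for which the leading coefficient of the octic $-\tfrac{4}{c_2}\delta_{\mathcal{C}_{1,0}}$ is, modulo squares, exactly the expression $(a_0h_0-c_0^{2})d_0-g_0^{2}a_0$ appearing in the statement. A secondary technical point is to make rigorous that $s_{12}^{2}-s_{11}s_{22}\neq 0$ (plus mild genericity) really does force the two non-degeneracy conditions $h_0\neq0$ and $g_1^{2}-4d_2h_0\neq0$; the unirationality of $\mathcal{C}_{0,1}$ itself is not used as an input, being established independently in Section~\ref{sec:643210}, so the theorem should be read as: under the stated arithmetic condition, Mestre's criterion can be run directly on the degenerate fibre $\mathcal{C}_{1,0}$.
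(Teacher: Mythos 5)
Your proposal is correct and follows essentially the same route as the paper: both apply Proposition~\ref{prop:Me844000} directly to the special fibre $\mathcal{C}_{1,0}$, dispose of the nondegeneracy conditions $h_0\in k^\times$ and $g_1^2-4d_2h_0\in k^\times$ by generality, and identify the leading coefficient of $-\tfrac{4}{h_0}\delta_{\mathcal{C}_{1,0}}$ (up to a change of variables/squares) with $(a_0h_0-c_0^2)d_0-g_0^2a_0$. Your observation that the unirationality of $\mathcal{C}_{0,1}$ is never actually used as an input is accurate and applies equally to the paper's own argument.
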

\begin{proof}
	By Proposition \ref{prop:Me844000}, in the notation of (\ref{deg-(3,1,0)-(4,0,0)}), a conic bundle $$\mathcal{C}_{1,0} \subset \PP(\OO_{\PP^1}(4) \oplus \OO_{\PP^1} \oplus \OO_{\PP^1})$$ is unirational when $h_0 \in k^*,g_1^2-d_2h_0 \in k^*$ and the leading coefficient of $-\frac{4}{h_0}\delta_{\mathcal{C}_{1,0}}$, written as a polynomial in $t=\frac{x_0}{x_1}$ is a square. The first two conditions are satisfied thank to the generality assumption. For the last condition we expand $\delta_{\mathcal{C}_{1,0}}$ as a polynomial in $P_{\delta} \in k(a_0,a_1,\dots,h_0)[t]$ and check that its leading coefficient, up to a linear change of variables, is $(a_0h_0-c_0^2)d_0-g_0^2a_0$ which is a square by assumption. 
\end{proof}

\begin{Remark}
	Note that using a similar argument of the proof of Lemma \ref{lem:d8} we have that the locus of conic bundles $\mathcal{Q}^1 \subset \PP(\OO_{\PP^1}(3) \oplus \OO_{\PP^1}(1) \oplus \OO_{\PP^1})$ that satisfy the two conditions of Theorem \ref{(3,1,0-(4,0,0)} is Zariski dense in $\PP(V^1_{6,4,3,2,1,0})$, but in general not Zariski open.
\end{Remark}
\section{Higher degree discriminants}\label{sec:beyond}
It is natural to ask whether conic bundles with more than eight singular fibers can be unirational. Constructions similar to those above work in further cases; however, the number of components of the parameter space grows quadratically.

\begin{Proposition}\label{lem:countDn}
Let $P_n$ be the number of components of the parameter space of conic bundles with $n$ singular fibers. Then
\[
\sum_{n\ge 0} P_n\,q^n=\frac{1}{(1-q^2)(1-q^3)(1-q^4)}.
\]
In particular, $P_n$ grows quadratically with $n$.
\end{Proposition}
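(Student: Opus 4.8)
The plan is to translate the statement into a purely combinatorial count of admissible multidegrees, evaluate the resulting generating function, and then read off the growth rate from its pole at $q=1$.

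\emph{Step 1: from components to combinatorial data.} By the Birkhoff--Grothendieck theorem (Remark~\ref{Gro}), every conic bundle $\mathcal{Q}^1\to\PP^1$ with $n$ singular fibres is a splitting conic bundle inside some $\PP(\Ecal_{a_0,a_1,a_2})$, and — exactly as in the case $d_{\mathcal{Q}^1}=8$ — the components of the parameter space are in bijection with the admissible multidegrees $(d_{0,0}\ge d_{0,1}\ge\cdots\ge d_{2,2}\ge 0)$ satisfying the compatibility relations \eqref{compdeg} and $d_{0,0}+d_{1,1}+d_{2,2}=n$. First I would observe that \eqref{compdeg} forces $d_{0,0}\equiv d_{1,1}\equiv d_{2,2}\pmod 2$ and $d_{i,j}=\tfrac12(d_{i,i}+d_{j,j})$, so that a multidegree is already determined by the ordered triple of diagonal degrees. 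Writing $d_{i,i}=2a_i+m$ with $m\in\{0,1\}$ the common parity and $a_0\ge a_1\ge a_2\ge 0$, this yields a bijection between components with $n$ singular fibres and pairs $\big((a_0,a_1,a_2),m\big)$ with $a_0\ge a_1\ge a_2\ge 0$, $m\in\{0,1\}$, and $2(a_0+a_1+a_2)+3m=n$. The restriction $m\in\{0,1\}$ is precisely what removes the redundancy coming from $\PP(\Ecal)=\PP(\Ecal(t))$, and the degenerate loci where some $\sigma_{i,i}\equiv 0$ (which, as noted earlier, lie simultaneously in several types and give rational bundles) do not contribute extra components.

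\emph{Step 2: the generating function.} With this description I would pass to the ``gap'' coordinates $a=a_2\ge 0$, $b=a_1-a_2\ge 0$, $c=a_0-a_1\ge 0$, so that $a_0+a_1+a_2=3a+2b+c$ and the sum factors:
\[
\sum_{n\ge 0}P_n\,q^{n}
=\sum_{m\in\{0,1\}}\ \sum_{a,b,c\ge 0} q^{\,6a+4b+2c+3m}
=\bigl(1+q^{3}\bigr)\cdot\frac{1}{1-q^{6}}\cdot\frac{1}{1-q^{4}}\cdot\frac{1}{1-q^{2}}.
\]
Cancelling $1+q^{3}$ against $1-q^{6}=(1-q^{3})(1+q^{3})$ gives
\[
\sum_{n\ge 0}P_n\,q^{n}=\frac{1}{(1-q^{2})(1-q^{3})(1-q^{4})},
\]
which is the asserted identity; equivalently, $P_n$ is the number of partitions of $n$ into parts of size $2$, $3$ and $4$.

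\emph{Step 3: quadratic growth.} The right-hand side is a rational function whose denominator vanishes to order exactly $3$ at $q=1$ and to order at most $2$ at the remaining roots of unity on $|q|=1$, so its partial-fraction expansion exhibits $P_n$ as a quasi-polynomial of degree $2$ in $n$. Extracting the principal part at $q=1$, where $(1-q^{2})(1-q^{3})(1-q^{4})\sim 24\,(1-q)^{3}$ and $\tfrac{1}{(1-q)^{3}}=\sum_{n}\binom{n+2}{2}q^{n}$, gives $P_n=\tfrac{1}{48}\,n^{2}+O(n)$; in particular $P_n$ grows quadratically with $n$.

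\emph{Expected main obstacle.} The only delicate step is the first one: setting up the dictionary between ``components of the parameter space'', admissible multidegrees, and the triples $\big((a_0,a_1,a_2),m\big)$, and verifying that this is a genuine bijection — in particular that the normalization $m\in\{0,1\}$ neither over-counts (e.g.\ the types $(2,1,1)$ and $(1,0,0)$ with $m=2$ describe the same multidegree $(4,3,3,2,2,2)$, hence the same component) nor under-counts, and that the strata on which some $\sigma_{i,i}$ vanishes identically genuinely fail to be separate components. Once this bookkeeping is settled, Steps 2 and 3 are entirely formal.
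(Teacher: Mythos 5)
Your proposal is correct, and its combinatorial core is the same as the paper's: both arguments reduce the count of components to counting triples of non-negative diagonal degrees $d_{0,0}\ge d_{1,1}\ge d_{2,2}$ of equal parity summing to $n$, using the fact that the compatibility relations \eqref{compdeg} determine the off-diagonal degrees and that integrality of the normalized weights is exactly the equal-parity condition. Where you diverge is in how the generating function and the growth rate are established: the paper simply identifies the count with OEIS A005044 (Alcuin's sequence) and quotes both the product formula and the explicit quasi-polynomial, whereas you prove the identity from scratch via the substitution $d_{i,i}=2a_i+m$ with $m\in\{0,1\}$ and the gap coordinates $a=a_2$, $b=a_1-a_2$, $c=a_0-a_1$, giving $\bigl(1+q^3\bigr)/\bigl((1-q^2)(1-q^4)(1-q^6)\bigr)=1/\bigl((1-q^2)(1-q^3)(1-q^4)\bigr)$, and you extract the quadratic growth from the order-$3$ pole at $q=1$ (the constant $n^2/48$ agrees with the paper's closed form $\mathrm{rd}(n^2/12)-\lfloor n/4\rfloor\lfloor (n+2)/4\rfloor$). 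Your version is therefore self-contained where the paper relies on a citation; your attention to the normalization $m\in\{0,1\}$ (avoiding double-counting across twists $\PP(\Ecal)\cong\PP(\Ecal(t))$) makes explicit the bookkeeping that the paper handles by fixing $a_2=0$. Both are valid; yours buys independence from the external reference at the cost of a slightly longer argument.
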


\begin{proof}
Following \eqref{compdeg}, the number $P_n$ is found as follows: choose non negative integers $d_{0,0},d_{1,1},d_{2,2}$ with sum $n$, and integers $a_0,a_1,a_2$ such that
$$
d_{0,0}-2a_0=d_{1,1}-2a_1=d_{2,2}-2a_2=:t.
$$
Then $d_{0,1}=t+a_0+a_1$, $d_{0,2}=t+a_0+a_2$, $d_{1,2}=t+a_1+a_2$. The triple $(a_0,a_1,a_2)$ is determined up to $(a_0,a_1,a_2)\sim(a_0+k,a_1+k,a_2+k)$, so each class has a unique representative with $a_2=0$, and the number of equivalence classes is $P_n$. Given $(d_{0,0},d_{1,1},d_{2,2})$, its representative is
$$
\left(\tfrac{d_{0,0}-d_{1,1}}{2},\tfrac{d_{0,0}-d_{2,2}}{2},0\right)
$$
which is integral if and only if $d_{0,0},d_{1,1},d_{2,2}$ have the same parity. Thus $P_n$ counts weak partitions of $n$ into three parts of the same parity; the generating function is as stated (\href{https://oeis.org/A005044}{OEIS A005044}). The quadratic growth follows from the explicit formula (found in \emph{ibid})
$$P_n=\mathrm{rd}\left( \frac{n^2}{12}\right)-\left\lfloor \frac{n}{4}\right\rfloor \left\lfloor \frac{n+2}{4}\right\rfloor$$ 
where $\mathrm{rd}$ denotes rounding, and $\lfloor x \rfloor$ denotes the floor of $x$.
\end{proof}

\begin{Remark}
The sequence $P_n$ is known as Alcuin's sequence, and appears when generalizing a problem mentioned by the Carolingian monk Alcuin of York \cite{Alcuin2}.
\end{Remark}

\section{Computer algebra scripts}
\subsection*{Maple scripts}
Companion Maple worksheets implementing the constructions in this paper are available at
\begin{center}
\url{https://github.com/msslxa/Degree-8-surface-conic-bundles}
\end{center}
They build the coefficient polynomials of the conic bundles over $\mathbb{P}^1$, diagonalize the ternary quadratic via the explicit change of variables in Proposition~\ref{diag}, and output the associated Brauer model
$a(t)x^2+b(t)y^2-z^2=0$ together with the relevant residues. The repository includes scripts for the four multidegree types treated in the text: $(4,0,0)$, $(3,1,0)$, $(2,2,0)$, and $(2,1,1)$. 

\subsection*{Magma verification for type $(4,0,0)$}
Alongside the Maple worksheets, the repository also contains the file
\texttt{(8,4,4,0,0,0)\_Cremonas.txt}, a \textsc{Magma} script that verifies the Cremona transformation-based proof in Proposition~\ref{prop:U12}.
The script works over the rational function field
$$
\mathbb{Q}(a_4,a_5,a_6,b_2,b_3,c_0,c_1,c_2,c_3,c_4,d_0,g_0,h_0)
$$
imposes the linear relations defining $\Ucal^{12}$, embeds the surface $X_8\subset\PP^3$ with the $6$-fold line,
and constructs the plane octic $C=X_8\cap(z_1-z_2=0)$ with ordinary double points at the prescribed three points and multiplicity $6$ at $q$. It then applies consecutively:
\begin{itemize}
\item[-] the quadratic Cremona transformation $\phi_1$ with base points $c,n_1,n_3$ on $H\simeq\PP^2$ (sending $C$ to a sextic $C_1$);
\item[-] the quadratic Cremona transformation $\phi_2$ with base points $q_1,q_2,q_3$ (sending $C_1$ to a quartic $C_2$);
\item[-] the explicit quadratic map $\phi_3:[w_0:w_1:w_2]\mapsto[-2(w_0-w_1)w_2+w_0^2:\;w_1^2:\;w_0w_1]$,
which sends $C_2$ to a conic $C_3$.
\end{itemize}
The script outputs the defining equations at each step and confirms that $C_3$ is a (smooth) conic, hence rational; consequently $C$ is rational, giving the unirational multisection used in the proof.

\subsection*{Magma script computing the cohomology of the normal bundle}
The script computes the numbers $h^0(N_{S/\Tcal_{a_0,a_1,a_2}})$ and $h^1(N_{S/\Tcal_{a_0,a_1,a_2}})$ where $S$ is a random conic bundle of type $(2,1,1)$, $(2,2,0)$, $(3,1,0)$, $(4,0,0)$.

\bibliographystyle{amsalpha}
\bibliography{Biblio}

\providecommand{\bysame}{\leavevmode\hbox to3em{\hrulefill}\thinspace}
\providecommand{\MR}{\relax\ifhmode\unskip\space\fi MR }
\providecommand{\MRhref}[2]{%
  \href{http://www.ams.org/mathscinet-getitem?mr=#1}{#2}
}
\providecommand{\href}[2]{#2}
\begin{thebibliography}{HKT16}

\bibitem[ABP18]{ABP18}
A.~Auel, C.~B\"{o}hning, and A.~Pirutka, \emph{Stable rationality of quadric
  and cubic surface bundle fourfolds}, Eur. J. Math. \textbf{4} (2018), no.~3,
  732--760. \MR{3851115}

\bibitem[AO18]{AO18}
H.~Ahmadinezhad and T.~Okada, \emph{Stable rationality of higher dimensional
  conic bundles}, \'{E}pijournal G\'{e}om. Alg\'{e}brique \textbf{2} (2018),
  Art. 5, 13. \MR{3816900}

\bibitem[Bea77]{Bea77}
A.~Beauville, \emph{Vari\'{e}t\'{e}s de {P}rym et jacobiennes
  interm\'{e}diaires}, Ann. Sci. \'{E}cole Norm. Sup. (4) \textbf{10} (1977),
  no.~3, 309--391. \MR{472843}

\bibitem[BvB18]{BG18}
C.~B\"{o}hning and H.~C.~Graf von Bothmer, \emph{On stable rationality of some
  conic bundles and moduli spaces of {P}rym curves}, Comment. Math. Helv.
  \textbf{93} (2018), no.~1, 133--155. \MR{3777127}

\bibitem[CTS21]{CS21}
Jean-Louis Colliot-Th\'el\`ene and Alexei~N. Skorobogatov, \emph{The
  {B}rauer-{G}rothendieck group}, Ergebnisse der Mathematik und ihrer
  Grenzgebiete. 3. Folge. A Series of Modern Surveys in Mathematics [Results in
  Mathematics and Related Areas. 3rd Series. A Series of Modern Surveys in
  Mathematics], vol.~71, Springer, Cham, [2021] \copyright2021. \MR{4304038}

\bibitem[HKT16]{HKT16}
B.~Hassett, A.~Kresch, and Y.~Tschinkel, \emph{Stable rationality and conic
  bundles}, Math. Ann. \textbf{365} (2016), no.~3-4, 1201--1217. \MR{3521088}

\bibitem[HM82]{HM82}
M.~Hazewinkel and C.~F. Martin, \emph{A short elementary proof of
  {G}rothendieck's theorem on algebraic vectorbundles over the projective
  line}, J. Pure Appl. Algebra \textbf{25} (1982), no.~2, 207--211. \MR{662762}

\bibitem[HPT18]{HPT18}
B.~Hassett, A.~Pirutka, and Y.~Tschinkel, \emph{Stable rationality of quadric
  surface bundles over surfaces}, Acta Math. \textbf{220} (2018), no.~2,
  341--365. \MR{3849287}

\bibitem[HS92]{Alcuin2}
J.~Hadley and D.~Singmaster, \emph{Problems to sharpen the young}, The
  Mathematical Gazette \textbf{76} (1992), no.~475, 102--126.

\bibitem[HT00]{HT00}
J.~Harris and Y.~Tschinkel, \emph{Rational points on quartics}, Duke Math. J.
  \textbf{104} (2000), no.~3, 477--500. \MR{1781480}

\bibitem[IP99]{IP99}
V.~A. Iskovskikh and Y.~G. Prokhorov, \emph{Fano varieties}, Algebraic
  geometry, {V}, Encyclopaedia Math. Sci., vol.~47, Springer, Berlin, 1999,
  pp.~1--247. \MR{1668579}

\bibitem[KM17]{KM17}
J.~Koll\'{a}r and M.~Mella, \emph{Quadratic families of elliptic curves and
  unirationality of degree 1 conic bundles}, Amer. J. Math. \textbf{139}
  (2017), no.~4, 915--936. \MR{3689320}

\bibitem[Kol17]{Ko17}
J.~Koll\'{a}r, \emph{Conic bundles that are not birational to numerical
  {C}alabi-{Y}au pairs}, \'{E}pijournal G\'{e}om. Alg\'{e}brique \textbf{1}
  (2017), Art. 1, 14. \MR{3743104}

\bibitem[Mas23]{Mas23b}
A.~Massarenti, \emph{On the unirationality of quadric bundles}, Adv. Math.
  \textbf{431} (2023), Paper No. 109235, 24. \MR{4624871}

\bibitem[Mes94]{Mes94}
J.~F. Mestre, \emph{Annulation, par changement de variable, d'\'el\'ements de
  {${\rm Br}_2(k(x))$} ayant huit p\^oles, \`a{} r\'esidu constant}, C. R.
  Acad. Sci. Paris S\'er. I Math. \textbf{319} (1994), no.~11, 1147--1149.
  \MR{1309090}

\bibitem[MM24]{MM24}
A.~Massarenti and M.~Mella, \emph{On the birational geometry of conic bundles
  over the projective space}, Math. Nachr. \textbf{297} (2024), no.~4,
  1208--1220. \MR{4734969}

\bibitem[Poo17]{Poo17}
B.~Poonen, \emph{Rational points on varieties}, Graduate Studies in
  Mathematics, vol. 186, American Mathematical Society, Providence, RI, 2017.
  \MR{3729254}

\bibitem[Pro18]{Pr18}
Y.~G. Prokhorov, \emph{The rationality problem for conic bundles}, Uspekhi Mat.
  Nauk \textbf{73} (2018), no.~3(441), 3--88. \MR{3807895}

\bibitem[Sch19]{Sc19a}
S.~Schreieder, \emph{On the rationality problem for quadric bundles}, Duke
  Math. J. \textbf{168} (2019), no.~2, 187--223. \MR{3909896}

\end{thebibliography}
\end{document}